\documentclass[12pt,oneside]{amsart}

\usepackage[cp1251]{inputenc} 
\usepackage[T2A]{fontenc} 

\usepackage[english]{babel}
\usepackage{amsmath,  amssymb, amsthm}
\usepackage{floatflt}
\usepackage{multicol}
\usepackage{amsfonts}
\usepackage{caption}
\usepackage{xcolor}
\usepackage{enumerate}
\usepackage{comment}
\usepackage{float}

\usepackage{pdfpages} 

\usepackage{graphicx} 

\usepackage{tikz} 

\newcommand{\dist} {{\textrm{dist}}} 


\newtheorem{thm}{Theorem}
\newtheorem{lemma}{Lemma}

\newtheorem{theorem}{Theorem}

\numberwithin{equation}{section}

\DeclareCaptionLabelSeparator{dot}{. }
\captionsetup{justification=centering,labelsep=dot}

\textheight=25.7cm  
\textwidth=17cm  
\oddsidemargin=0.06cm  
\topmargin=-1.7cm  
\parindent=24pt  
\parskip=0pt  

\title{Simple closed geodesics on regular spherical polyhedra}
\author{Darya Sukhorebska}
\thanks{The author is partially supported by DFG grant TRR 191.\\
\textbf{Acknowledgements.}The author is grateful to Alexander Borisenko for useful remarks, Alexander Lytchak for the discussion, and Jeroen Winkel for valuable comments.  }

 \date{}  %
 
\address{Karlsruher Institut f\"ur Technologie, Englerstr. 2, 76131 Karlsruhe, Germany }
\address{B.Verkin Institute for Low Temperature Physics and Engineering of the National Academy of Sciences of Ukraine, 47 Nauky Ave., Kharkiv, 61103, Ukraine}

\email{darya.sukhorebska@kit.edu}

\begin{document}


\begin{abstract}
     In this article, we found all simple closed geodesics on regular 
     spherical octahedra and spherical cubes. 
     In addition, we estimate the number of simple closed  geodesics on 
     regular spherical tetrahedra. 

\medskip	

\noindent
\textit{Keywords: } simple closed geodesic, regular tetrahedron, octahedron, cube, spherical space.

\medskip
\noindent
\textnormal{MSC2020:} 52B05, 53C22, 52B10

\end{abstract}

\maketitle


 \section{Introduction}
Describing simple closed geodesics on a convex  regular or not
surface in Euclidean space is a long-standing problem.
The first results, in particular on convex polyhedra, 
were presented by Alexandrov~\cite{Alek50}, Pogorelov~\cite{Pog49}, 
and Toponogov ~\cite{Top63}.
Notice that a geodesic does not go through a vertex of a polyhedron, 
that makes the behavior of geodesics differ from the smooth case.
D. Fuchs and E. Fuchs completed and systematized results on
closed geodesics on regular polyhedra in Euclidean space~\cite{FucFuc07}.
 In particular, they showed that on a regular polyhedron, that is not a tetrahedron, 
 there are finitely many simple closed geodesics. 
On a regular tetrahedron in Euclidean space, 
there are infinitely many simple closed geodesics. 
Akopyan and Petrunin~\cite{AkPet2018} showed that isosceles tetrahedra are the only convex surfaces in Euclidean space that admit arbitrary long simple closed geodesics. 

In this paper, we study simple closed geodesics on regular spherical tetrahedra, octahedra, and cubes. 
In this case, facets of the polyhedra have constant curvature $1$ 
and the intrinsic geometry of the polyhedra depends on the value of the planar angle of a facet. 

If there is a simple closed geodesic $\gamma$ on a polyhedron in Euclidean space, 
then there are infinitely many geodesics equivalent to $\gamma$. 
All of them are parallel to each other in the development of the polyhedron. 
By distinct geodesics on the euclidean polyhedron, we mean geodesics
that are not equivalent and can not be transformed into each other
by symmetries of the polyhedron.
If there is a simple closed geodesic $\gamma$ on 
a spherical polyhedron, then $\gamma$ is unique within its class of equivalents.
By different geodesics on the spherical polyhedron, we mean 
geodesics that can not be mapped into each other by symmetries of the surface.

In the work~\cite{BorSuh2021}, a behavior of simple closed geodesics 
on a regular spherical tetrahedron is described, 
and conditions of the existence of a simple closed geodesic 
in terms of the estimates on the plane angle $\alpha$ of the tetrahedron are found. 
In particular, it was shown that there are finitely many geodesics 
on a spherical tetrahedron (unlike to euclidean case). 
Borisenko, in his work~\cite{Bor2022}, 
presented the necessary and sufficient condition of 
the existence of a simple closed geodesic 
in terms of the length of an abstract shortest curve.

In the current work, we estimate the number of different simple closed geodesics 
on a spherical tetrahedron with a planar angle~$\alpha$.

\newtheorem*{theorem:tetr}{Theorem \ref{geod_tetr}}
\begin{theorem:tetr}
The number $N(\alpha)$ of different simple closed geodesics on a spherical tetrahedron 
 with the planar angle $\alpha \in (\pi/3, 2\pi/3)$
 satisfies
  $$c_1(\alpha)<N(\alpha) < c_2(\alpha),$$ where 
  \begin{equation} 
 c_1(\alpha) \approx \frac{3 \cos^2(\alpha)}{ 8\sin^2(\alpha/2) (4\sin^2(\alpha/2)-1)}. \notag
 \end{equation}
 \begin{equation} 
 c_2(\alpha) \approx \frac{2 \sin^2(\alpha/2)}{ 4\sin^2(\alpha/2) - 1}  +1.
 \notag
 \end{equation}
\end{theorem:tetr}
When $\alpha$ tends to $\pi/3$,
the number of simple closed geodesics on a spherical tetrahedron grows to infinity. 

On a regular euclidean octahedron  
there are only two distinct simple closed geodesics and 
there are only three of them on a cube. 
In the present work, we showed that the same holds for spherical octahedra and spherical cubes. 

\newtheorem*{theorem:oct}{Theorem \ref{geod_oct}}
\begin{theorem:oct} 
There are only two different simple closed geodesics on  regular spherical octahedra.
\end{theorem:oct} 

\newtheorem*{theorem:cube}{Theorem \ref{geod_cube}}
\begin{theorem:cube}  
There are only three different simple closed geodesics on spherical cubes.
\end{theorem:cube} 

In the Euclidean case, the proof follows from a tiling of a plane.
However, there is no tiling of a unite sphere with regular triangles or squares 
for any planar angle.
To prove our result we analyze the domain that 
a simple closed geodesic bounds on a spherical polyhedron.

  \section{Main definitions}

 A \textit{spherical triangle} is a convex polygon
 on a unit sphere bounded by  three shortest lines. 
 A triangle is regular if the lengths of its edges are equal.
 A regular triangle is uniquely determined by the value  $\alpha$ of   inner angles.
The   length of the edge  is equal to
\begin{equation}\label{a_triangle}
a_{t} =\text{arccos} \left(  \frac{\cos\alpha}{1-\cos\alpha}  \right), 
\end{equation}
\begin{equation*} \label{alim_triangle}
\lim\limits_{\alpha\to\pi/3 } a_t = 0; \;\;\; 
\lim\limits_{\alpha\to\pi/2 } a_t = \pi/2.
\end{equation*}

A \textit{regular spherical tetrahedron} $A_1 A_2 A_3 A_4$  
is a closed convex polyhedron 
that consists of four  regular spherical triangles glued by three in each vertex. 
A planar angle $\alpha$ of a  regular spherical tetrahedron 
 satisfies the conditions  $\pi/3 < \alpha \le 2\pi/3$.
If $\alpha=2\pi/3$, then the tetrahedron is the unit  two-dimensional sphere.
There are infinitely many simple closed geodesics on it.
In the following we suppose that 
the angle $\alpha$ satisfies  $\pi/3 < \alpha < 2\pi/3$ on the tetrahedron.

A \textit{regular spherical octahedron} is a closed convex polyhedron
that consist of eight spherical triangles glued by four in each vertex. 
A planar angle $\alpha$ of the octahedron  
 satisfies   $\pi/3 < \alpha \le \pi/2$.
If $\alpha= \pi/2$, then the octahedron is the unit  two-dimensional sphere
with infinitely many simple closed geodesics on it.
In the following we assume that the planar angle $\alpha$ of the octahedron satisfies  
$\pi/3 < \alpha < \pi/2$.

Consider a spherical triangle $ABC$ with the edges $a$ opposite to the angle $A$,
$b$ opposite to the angle $B$ and $c$, opposite to $C$.
Consider two cosine formulas for the sides 
\begin{equation}\label{cos_rule}
    \cos b = \cos a \, \cos c + \sin a \,\sin c\, \cos B
\end{equation}
\begin{equation*}
    \cos c = \cos a \,\cos b + \sin a \,\sin b\, \cos C
\end{equation*}
Combining this two equations we have 
\begin{equation*}
    \cos b = \cos a \left(  \cos a \,\cos b + \sin a\, \sin b\, \cos C \right) 
    + \sin a\, \sin c\, \cos B;
\end{equation*}
\begin{equation*}
    \cos b \, (1-\cos^2a)=  \cos a\, \sin a\, \sin b\, \cos C + \sin a\, \sin c\, \cos B;
\end{equation*}
Dividing both sides on $\sin a$ we obtain
\begin{equation}\label{cosin_rule}
    \cos b\, \sin a=  \cos a\, \sin b\, \cos C +  \sin c\, \cos B;
\end{equation}
Since in any spherical triangle, the sine of the angles
are to each other as the sine of their opposite sides, one can rewrite 
the equation~(\ref{cosin_rule}) as following
\begin{equation}\label{formula}
    \cos b \,\sin A=  \cos a\, \sin B \,\cos C +  \sin C\, \cos B.
\end{equation}

A \textit{spherical square} is a convex polygon on a unit sphere bounded by 
the four shortest lines  of the same length and having four equal angles $\alpha$.  
Denote the vertices of a spherical square as $A_1A_2A_3A_4$ and the length of 
the side of a square as $a_s$. 
Applying the formula~(\ref{formula}) to the triangle $A_1A_2A_4$, we get
$$ \cos a_s\, \sin (\alpha/2) =  
\cos a_s \,\sin (\alpha/2)\, \cos \alpha + 
\sin \alpha \,\cos (\alpha/2). $$
Modifying this equation, we get that the length of the edge is equal to
\begin{equation}\label{a_cube}
 a_s =\text{arccos} \left( \cot^2 (\alpha/2) \right).
\end{equation}
\begin{equation*} \label{alim_cube}
\lim\limits_{\alpha\to\pi/2  } a_s = 0; \;\;\;  
\lim\limits_{\alpha\to 2\pi/3  }a_s=  \text{arccos} (1/3).
\end{equation*}
Using (\ref{cos_rule}) we can also calculate the length $d$
of the diagonal of the cube
\begin{equation}\label{diagonal}
    \cos d = \frac{\cos^4 (\alpha/2) - \cos^2 \alpha}
                    {\sin^4  (\alpha/2)}.
\end{equation}
\begin{equation*} \label{dlim_cube}
\lim\limits_{\alpha\to\pi/2  } d = 0; \;\;\;  
\lim\limits_{\alpha\to 2\pi/3  } d=  \text{arccos} (-1/3).
\end{equation*}

A \textit{spherical cube} is a closed convex polyhedron
that consist of six spherical squares glued by three in each vertex. 
A planar angle $\alpha$ of a  cube
 satisfies the conditions  $\pi/2 < \alpha \le  2\pi/3$.
If $\alpha= 2\pi/3$, then the cube is a unit  two-dimensional sphere
and has infinitely many simple closed geodesics on it.
In the following we assume that on the cube the planar angle  $\alpha$ satisfies  
$\pi/2 < \alpha <  2\pi/3$.

For all regular spherical polyhedra the inner geometry of the surface depends on the planar angle $\alpha$.

 A \textit{geodesic}  is a locally shortest curve $\gamma: [0,1] \rightarrow M$.  
 The geodesic is  \textit{closed} if $\gamma(0)=\gamma(1)$ and $\gamma'(0)=\gamma'(1)$. 
 The geodesic is called \textit{simple} if it has no points of self intersection,
 i.e. the map $\gamma: (0,1)  \rightarrow M$ is injective.

 A geodesic has the following properties on a convex polyhedron: 
1) it consists of line segments on facets of the polyhedron; 
2) it forms equal angles with edges of adjacent facets; 
3) a geodesic cannot pass through a vertex of a convex polyhedron~\cite{Alek50}.

 In what follows the words \textit{tetrahedron}, \textit{octahedron} or \textit{cube}
 refer  to a regular spherical tetrahedron, octahedron, or spherical cube 
 respectively unless the opposite is specified. 
 The word  \textit{geodesic} refers to a simple closed geodesic.

In Euclidean space, one can unfold the facets of a polyhedron onto 
the plane in the order in which 
these facets are intersected by geodesic. 
The resulting polygon on a plane is called \textbf{a development} of the polyhedron.
The geodesic is unfolded into a straight line segment inside the development. 
For a spherical polyhedron, one can perform the same unfolding onto a
unite two-dimensional sphere. 
However, in this case, we should consider the local embedding 
of the development into the sphere. 
The global embedding needs to be checked separately. 
The geodesic is locally unfolded into an arc of a great circle of a sphere.

 \section{Properties}

  Borisenko \cite{Bor2020} proved the  generalization of  Toponogov theorem \cite{Top63} 
to the case of two-dimensional Alexandrov space.

\begin{thm}[Borisenko \cite{Bor2020}]\label{Bor_length}
    Let $G$ be a domain homeomorphic to a disc and $G$ lies in a two-dimansional 
    Alexandrov space of curvature $\ge c$ (in sense of Alexandrov).
    If the boundary curve $\gamma$ of $G$ is $\lambda$-convex, $\lambda>0$
    and $c+\lambda^2>0$,
    then the length $s(\gamma)$ of $\gamma$ satisfies 
    $$s(\gamma) \leq \frac{2\pi \sqrt{|c|}}{\sqrt{c+\lambda^2}}.$$
    The equality holds if and only if the domain $G$ is a disc on the plane of constant 
    curvature $c$.
\end{thm}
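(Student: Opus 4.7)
\medskip
\noindent\textbf{Proof proposal.}
The plan is to combine the Alexandrov version of the Gauss--Bonnet formula (Reshetnyak) with a Toponogov-type tube comparison for the inward parallel curves of $\gamma$, and to conclude by algebraic optimisation.

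I would first apply Gauss--Bonnet to the disc $G$:
$$\omega(G)+\tau(\gamma)=2\pi,$$
where $\omega(G)$ is the total (Alexandrov) curvature of $G$ and $\tau(\gamma)$ is the total turning of $\gamma$, with atoms at any corners. The curvature lower bound $K\ge c$ yields $\omega(G)\ge c\,|G|$, while the $\lambda$-convexity of $\gamma$ yields $\tau(\gamma)\ge \lambda\,s(\gamma)$, and hence
\begin{equation}\label{gb_proposal}
c\,|G|+\lambda\,s(\gamma)\le 2\pi.
\end{equation}

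The next step is to supplement \eqref{gb_proposal} with a matching lower bound on $|G|$ in terms of $s(\gamma)$. I would analyse the inward parallel curves $\gamma_{t}=\{x\in G\,:\,\dist(x,\gamma)=t\}$ and argue, by Toponogov comparison with the model plane $M^{2}_{c}$, that $\gamma_{t}$ stays at least as curved as the parallel of the extremal model circle of geodesic curvature $\lambda$; consequently $\gamma_{t}$ collapses at some time $t_{\ast}\le r_{\ast}:=\tfrac{1}{\sqrt{c}}\operatorname{arccot}(\lambda/\sqrt{c})$, the geodesic radius of the model disc of constant geodesic curvature $\lambda$. Combined with the coarea identity $|G|=\int_{0}^{t_{\ast}}s(\gamma_{t})\,dt$ and the comparison estimate for $s(\gamma_{t})$, this yields a lower bound of the form $c|G|\ge(\sqrt{c+\lambda^{2}}-\lambda)\,s(\gamma)$. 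Substituting into \eqref{gb_proposal} collapses to $s(\gamma)\,\sqrt{c+\lambda^{2}}\le 2\pi$, which is the claimed bound.

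For the equality statement, equality propagates through each step: $\omega(G)=c|G|$ forces constant curvature $c$ on $G$ with no singular part; $\tau(\gamma)=\lambda\,s(\gamma)$ forces $\gamma$ to have constant geodesic curvature $\lambda$; and equality in the tube comparison identifies $G$ with the round geodesic disc of radius $r_{\ast}$ in $M^{2}_{c}$.

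The principal technical obstacle is the tube comparison, which must be carried out synthetically: $\gamma$ need not be smooth, is $\lambda$-convex only in the support sense, and $G$ may carry singular curvature (as happens at the vertices of the polyhedra treated later in the paper). The standard remedy is to approximate $\gamma$ and the metric on $G$ by polyhedral or smooth $\lambda$-convex data and pass to the limit using Reshetnyak's theory of surfaces of bounded integral curvature; once this input is in place, the algebraic combination above produces the sharp length bound.
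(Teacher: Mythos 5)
This theorem is not proved in the paper at all: it is quoted from Borisenko \cite{Bor2020} and used as a black box (only through its corollary, Lemma~\ref{length}, with $c=1$ and $\lambda=0$), so there is no proof of the author's to compare yours with; your proposal has to stand on its own. Its first step does: Gauss--Bonnet for surfaces of bounded curvature together with $\omega(G)\ge c\,|G|$ and $\tau(\gamma)\ge\lambda\,s(\gamma)$ gives $c\,|G|+\lambda\,s(\gamma)\le 2\pi$, and the target you aim for, $s(\gamma)\sqrt{c+\lambda^2}\le 2\pi$, is indeed the correct sharp form (the factor $\sqrt{|c|}$ in the statement is harmless for $c=1$ but is presumably a misprint, since for $c=0$ it would force $s(\gamma)=0$).

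The genuine gap is your second step, and it is not merely technical. Under a \emph{lower} curvature bound the comparison for the inward equidistants $\gamma_t$ goes the other way than you need: as you yourself note, $\gamma_t$ is at least as convex as the model parallel, hence it shrinks at least as fast as in $M^2_c$, so the coarea argument yields an \emph{upper} bound on $|G|$ in terms of $s(\gamma)$, not the lower bound $c\,|G|\ge\bigl(\sqrt{c+\lambda^2}-\lambda\bigr)s(\gamma)$ that your substitution requires. Worse, that inequality is simply false in the case relevant to this paper, $c>0$: on the unit sphere ($c=1$) let $G$ be a geodesic disc of radius $\rho<\pi/4$; its boundary has geodesic curvature $\cot\rho>1$, so it is $\lambda$-convex with $\lambda=1$ and all hypotheses of the theorem hold, yet $c\,|G|=2\pi(1-\cos\rho)$, $\bigl(\sqrt{2}-1\bigr)s(\gamma)=\bigl(\sqrt{2}-1\bigr)2\pi\sin\rho$, and $(1-\cos\rho)/\sin\rho=\tan(\rho/2)<\sqrt{2}-1$ for every $\rho<\pi/4$. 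Thus your tube estimate holds only for the extremal cap and fails for all smaller admissible domains, so the final substitution is unavailable; what survives of the argument (Gauss--Bonnet alone) gives only $s(\gamma)\le 2\pi/\lambda$ when $c>0$, strictly weaker than the theorem, and the equality analysis falls with it. The area route cannot be repaired in this direction; a proof of the sharp bound must control the length of $\gamma$ by a different mechanism than a lower bound on the enclosed area.
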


A curve $\gamma$ is called $\lambda$-convex with $\lambda>0$ if any subarc $\gamma_0$ of $\gamma$
satisfies $\tau(\gamma_0)/s(\gamma_0)\ge0>0$,
where $\tau(\gamma_0)$ is the integral geodesic curvature (the swerve) of subarc $\gamma_0$
and $s(\gamma_0)$ is the length of $\gamma_0$.
 
A convex spherical polyhedron with facets  of curvature $1$ is a two-dimensional
Alexandrov space of curvature $\ge 1$ in sense of Alexandrov. 
A simple closed geodesic $\gamma$ bounds a domain, homeomorphic to a disc and $\lambda=0$.
In this case we have the following corollary from the Theorem~\ref{Bor_length},
that we will use.

\begin{lemma}[The length of a geodesic]\label{length}
The length of a simple closed geodesic on a convex spherical polyhedron is   $< 2\pi$.
\end{lemma}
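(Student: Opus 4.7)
The plan is to invoke Theorem~\ref{Bor_length} with $c=1$ and to push it to the limiting case $\lambda\to 0^+$, exploiting the fact that a closed geodesic has vanishing integral geodesic curvature. A convex spherical polyhedron $P$ is a two-dimensional Alexandrov space of curvature $\ge 1$ (curvature $1$ on the open facets and positive concentrated curvature at each vertex). Since $P$ is a topological $2$-sphere, the simple closed curve $\gamma$ separates it into two open domains $G_1,G_2$, each homeomorphic to a disc, and we can apply the theorem on each side.

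To obtain the non-strict bound $L(\gamma)\le 2\pi$, I would approximate $\gamma$ from within, say, $G_1$ by a family of $\lambda$-convex curves $\gamma_\lambda$ with $\lambda>0$ whose lengths tend to $L(\gamma)$ as $\lambda\to 0^+$; for instance, inward equidistant curves with their corners smoothed, which is legitimate since the convex hull of points just inside $G_1$ relative to the Alexandrov metric is $\lambda$-convex for any prescribed $\lambda$ small enough. Each $\gamma_\lambda$ bounds a disc inside $G_1$, so Theorem~\ref{Bor_length} yields $L(\gamma_\lambda)\le 2\pi/\sqrt{1+\lambda^2}$, and passing to the limit gives $L(\gamma)\le 2\pi$.

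To upgrade to the strict inequality, I would argue by contradiction. If $L(\gamma)=2\pi$ then the equality clause of Theorem~\ref{Bor_length}, applied in the limit to both $G_1$ and $G_2$, forces each domain to be isometric to a disc on the unit sphere with geodesic boundary, i.e.\ to a closed hemisphere. Gluing two hemispheres along the great circle $\gamma$ reconstructs the round unit sphere, so $P$ itself would have to be the sphere; this contradicts the standing assumption that the planar angle $\alpha$ lies strictly inside the admissible range, so that $P$ has at least one vertex with strictly positive angle defect. Hence $L(\gamma)<2\pi$.

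\textbf{Main obstacle.} The only real technical point is the passage $\lambda\to 0^+$: one has to produce a sequence of $\lambda$-convex curves inside $G_i$ whose perimeters converge to $L(\gamma)$ and to verify that the equality case of Borisenko's theorem survives in the limit. In the polyhedral Alexandrov setting this is routine but requires some care at edges of $P$, where the approximating curves must be smoothed to remain $\lambda$-convex.
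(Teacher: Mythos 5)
Your proposal is correct and follows essentially the same route as the paper, which obtains Lemma~\ref{length} precisely as the degenerate $\lambda=0$ case of Theorem~\ref{Bor_length} with $c=1$, using the equality (rigidity) clause to rule out length exactly $2\pi$. Your inward-equidistant approximation (for which, near $\gamma$, the metric is smooth of curvature $1$, so $\gamma_t$ is exactly $\tan t$-convex of length $L(\gamma)\cos t$ and equality would hold for every $t>0$, not merely in the limit) and the vertex-defect contradiction simply make explicit the details the paper leaves implicit.
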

 
We call two geodesics \textbf{equivalent} if they intersect edges of a polyhedron
in the same order. 
In other words it means that  developments of a polyhedron along these geodesics 
are equal polygons with equal labelings. 

\begin{lemma}[Uniqueness of a geodesic]\label{uniqness}
   Assume there are two geodesics $\gamma_1$ and $\gamma_2$ on a spherical polyhedron.
   If $\gamma_1$ and $\gamma_2$ are equivalent then they  coincide. 
\end{lemma}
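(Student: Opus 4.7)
The plan is to unfold both geodesics into the same spherical development and use rigidity of great circles on $S^2$ under the closure isometry. Since $\gamma_1$ and $\gamma_2$ are equivalent, they cross the same ordered sequence of edges $e_0,e_1,\dots,e_{n-1}$, and I unfold the corresponding faces to obtain a single development $D\subset S^2$ consisting of faces $F_0,F_1,\dots,F_n$ glued along $e_0,\dots,e_{n-1}$, where $F_0$ and $F_n$ are two copies of the same face of the polyhedron placed on $S^2$ by successive rotations across the edges. Both geodesics then appear in $D$ as arcs $\tilde\gamma_i$ of great circles $C_i$, $i=1,2$, each starting at $p_i\in F_0$ and ending at $q_i\in F_n$. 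The identification isometry $\phi$ of $S^2$ that maps $F_n$ back to $F_0$ is a composition of rotations of $S^2$, hence itself a rotation; the closure condition for $\gamma_i$ reads $\phi(q_i)=p_i$.

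Next I would upgrade this to invariance of the whole oriented great circle. Smoothness of $\gamma_i$ at the closure point forces $d\phi$ to send the oriented tangent of $\tilde\gamma_i$ at $q_i$ to the oriented tangent at $p_i$. Since a great circle on $S^2$ is determined by a point together with an oriented tangent direction, one obtains $\phi(C_i)=C_i$ as an oriented great circle.

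The key step is to show that such an invariant circle is unique. If $\phi$ were the identity, then $p_i=q_i$ and $\tilde\gamma_i$ would be a full great circle of length $2\pi$, contradicting Lemma~\ref{length}. So $\phi$ is a non-trivial rotation by some angle $\theta\in(0,2\pi)$ around an axis $\ell$. The equator orthogonal to $\ell$ is invariant, and $\phi$ acts on it as a rotation along the circle, preserving tangent orientation. If $\theta\neq\pi$, no other great circle is even setwise invariant; if $\theta=\pi$, the meridians through $\ell$ are setwise invariant but $\phi$ acts on each of them as a reflection, which reverses the tangent orientation and thus violates the smoothness of the closure. Hence the equator is the unique oriented invariant great circle, and $C_1=C_2$.

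Finally, once $C_1=C_2=:C$, both $\tilde\gamma_i$ are arcs of $C$ traversing $F_0,\dots,F_n$ in order. Each edge $e_j$ is a spherical segment of length $<\pi$, so $C$ meets it in at most one point (two distinct great circles meet in two antipodal points, of which at most one lies on an arc of length $<\pi$). The entry point $p_i$ on $e_0$, the exit $q_i$ on $e_{n-1}$ (or its identified copy), and all intermediate crossings are therefore determined by $C$ alone, so $\tilde\gamma_1=\tilde\gamma_2$ and, folding back, $\gamma_1=\gamma_2$. The most delicate step will be the involutive case $\theta=\pi$ of the rotation argument, in which setwise many great circles are preserved and it is the tangent-orientation constraint that singles out the equator.
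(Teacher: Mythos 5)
Your argument is correct, but it proves the lemma by a genuinely different route than the paper. The paper works intrinsically on the polyhedron: if the two equivalent geodesics are disjoint, they bound a vertex-free annulus, and Gauss--Bonnet forces the total curvature of that annulus to vanish, which is impossible on a surface of curvature $1$ unless the annulus degenerates; if they intersect, consecutive intersection points cut out vertex-free regions isometric to spherical lunes, so the bounding subarcs have length $\pi$ and each geodesic would have length at least $2\pi$, contradicting Lemma~\ref{length}. You instead pass to the development on $S^2$ and use holonomy rigidity: both unfolded geodesics are oriented great circles invariant under the same closure rotation $\varphi$, the case $\varphi=\mathrm{id}$ is excluded by Lemma~\ref{length}, and a non-trivial rotation has a unique invariant \emph{oriented} great circle (the orientation constraint correctly disposes of the meridians when $\theta=\pi$), so the two circles coincide and the crossing points with the developed edges are then forced. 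Your route handles the intersecting and non-intersecting cases uniformly and nicely explains the contrast with the Euclidean case, where the holonomy is a translation and one gets a parallel family instead of uniqueness; the paper's route avoids the developing map altogether and is shorter. Two small points you should make explicit to be fully rigorous: first, the closure isometry must be shown to be orientation-preserving (an element of $SO(3)$), since for the improper isometry $-\mathrm{id}$ every oriented great circle is invariant and uniqueness would fail; this follows because the development of the oriented surface is orientation-preserving face by face, not merely because it is "a composition of rotations." Second, in the final step you tacitly assume $C$ is distinct from each edge's great circle; this should be justified (if the unfolded geodesic ran along an edge, the geodesic would follow the edge on the surface and hence reach a vertex, which is impossible), and together with the fact that every edge has length less than $\pi$ (by \eqref{a_triangle} and \eqref{a_cube}) it gives the needed single intersection point per edge.
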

\begin{proof}
 First, assume  $\gamma_1$ and $\gamma_2$ do not intersect. 
 Then they enclose an annulus $\Omega$ 
 that does not have any vertex of the polyhedron inside. 
 Thus $\Omega$ is locally isometric to a unite sphere. 
 From the Gauss-Bonnet theorem follows
 that the integral of the curvature over the area of  $\Omega$ equals zero.
Thus  $\gamma_1$ and $\gamma_2$ coincide.

 If $\gamma_1$ and $\gamma_2$ intersect then 
 they have at least two points of intersection $Z_k$, $k=1,2,\dots$. 
 Between $Z_k$ and $Z_{k+1}$ geodesics bound a region $\Omega_k$ 
 that does not have a vertex of the polygon. 
Moreover, $\Omega_k$ is isometric to a lune on a unite sphere, $k=1,2, \dots$ 
(see Figure~\ref{uniqness_pic}.).
Thus the length of a segment of $\gamma_i$ that 
belongs to the boundary of $\Omega_k$ equals $\pi$, $i=1,2$.
Hence, the length of $\gamma_i$ is greater or equal to $2\pi$. 
This contradicts Lemma~\ref{length}.   

\begin{figure}[h]
\centering
\begin{tikzpicture}[line cap=round,line join=round,x=0.7cm,y=0.7cm]
\clip(-1,-1) rectangle (10,7);

\draw [shift={(5,2)},line width=0.5pt]  plot[domain=2.677945044588987:3.6052402625905993,variable=\t]({1*4.47213595499958*cos(\t r)+0*4.47213595499958*sin(\t r)},{0*4.47213595499958*cos(\t r)+1*4.47213595499958*sin(\t r)});
\draw [shift={(-2,2)},line width=0.5pt]  plot[domain=-0.27829965900511144:0.27829965900511133,variable=\t]({1*7.280109889280519*cos(\t r)+0*7.280109889280519*sin(\t r)},{0*7.280109889280519*cos(\t r)+1*7.280109889280519*sin(\t r)});
\draw [shift={(3,-4)},line width=0.5pt]  plot[domain=1.3258176636680326:1.8157749899217608,variable=\t]({1*8.246211251235321*cos(\t r)+0*8.246211251235321*sin(\t r)},{0*8.246211251235321*cos(\t r)+1*8.246211251235321*sin(\t r)});
\draw [shift={(3,8)},line width=0.5pt]  plot[domain=4.4674103172578254:4.957367643511554,variable=\t]({1*8.246211251235321*cos(\t r)+0*8.246211251235321*sin(\t r)},{0*8.246211251235321*cos(\t r)+1*8.246211251235321*sin(\t r)});
\draw [shift={(4,3)},line width=0.5pt]  plot[domain=1.892546881191539:2.819842099193151,variable=\t]({1*3.1622776601683795*cos(\t r)+0*3.1622776601683795*sin(\t r)},{0*3.1622776601683795*cos(\t r)+1*3.1622776601683795*sin(\t r)});
\draw [shift={(9.26,1.83)},line width=0.5pt]  plot[domain=2.0674508120046835:2.670461554730127,variable=\t]({1*4.743047543510396*cos(\t r)+0*4.743047543510396*sin(\t r)},{0*4.743047543510396*cos(\t r)+1*4.743047543510396*sin(\t r)});
\draw [shift={(3,4)},line width=0.5pt]  plot[domain=5.176036589385496:5.81953769817878,variable=\t]({1*4.47213595499958*cos(\t r)+0*4.47213595499958*sin(\t r)},{0*4.47213595499958*cos(\t r)+1*4.47213595499958*sin(\t r)});
\draw [shift={(3,4)},line width=0.5pt]  plot[domain=-0.46364760900080615:0.4636476090008061,variable=\t]({1*4.47213595499958*cos(\t r)+0*4.47213595499958*sin(\t r)},{0*4.47213595499958*cos(\t r)+1*4.47213595499958*sin(\t r)});
\draw [shift={(4.52,-1.39)},line width=0.5pt,dash pattern=on 1pt off 1pt]  plot[domain=1.9923002292021055:2.7655023419803824,variable=\t]({1*3.715171597651984*cos(\t r)+0*3.715171597651984*sin(\t r)},{0*3.715171597651984*cos(\t r)+1*3.715171597651984*sin(\t r)});
\draw [shift={(5,-4)},line width=0.5pt,dash pattern=on 1pt off 1pt]  plot[domain=1.2490457723982544:1.892546881191539,variable=\t]({1*6.324555320336759*cos(\t r)+0*6.324555320336759*sin(\t r)},{0*6.324555320336759*cos(\t r)+1*6.324555320336759*sin(\t r)});
\draw [shift={(8,4)},line width=0.5pt,dash pattern=on 1pt off 1pt]  plot[domain=2.761086276477428:3.522099030702158,variable=\t]({1*5.385164807134505*cos(\t r)+0*5.385164807134505*sin(\t r)},{0*5.385164807134505*cos(\t r)+1*5.385164807134505*sin(\t r)});
\draw [shift={(4.98,0.87)},line width=0.5pt]  plot[domain=1.1956789173869815:1.9391452482038598,variable=\t]({1*5.513374647164838*cos(\t r)+0*5.513374647164838*sin(\t r)},{0*5.513374647164838*cos(\t r)+1*5.513374647164838*sin(\t r)});

\draw [shift={(1.9122040192957854,9.15116621670033)},line width=0.5pt]  plot[domain=4.554596187407736:5.133379376634906,variable=\t]({1*8.223328519933942*cos(\t r)+0*8.223328519933942*sin(\t r)},{0*8.223328519933942*cos(\t r)+1*8.223328519933942*sin(\t r)});
\draw [shift={(1.48,7.09)},line width=0.5pt]  plot[domain=5.320959227028316:5.837122023141484,variable=\t]({1*6.632239370605173*cos(\t r)+0*6.632239370605173*sin(\t r)},{0*6.632239370605173*cos(\t r)+1*6.632239370605173*sin(\t r)});
\draw [shift={(5.66,-2.17)},line width=0.5pt,dash pattern=on 1pt off 1pt]  plot[domain=1.2955970507433245:2.0450509569284785,variable=\t]({1*6.647405169472385*cos(\t r)+0*6.647405169472385*sin(\t r)},{0*6.647405169472385*cos(\t r)+1*6.647405169472385*sin(\t r)});
\draw [shift={(6.02,-0.75)},line width=0.5pt,dash pattern=on 1pt off 1pt]  plot[domain=2.21765714690538:2.8231791279631517,variable=\t]({1*5.640436445632524*cos(\t r)+0*5.640436445632524*sin(\t r)},{0*5.640436445632524*cos(\t r)+1*5.640436445632524*sin(\t r)});

\draw [shift={(5.7004381342824075,8.156403736725176)},line width=0.5pt]  plot[domain=3.948141888997408:4.641794766491713,variable=\t]({1*7.358720397387098*cos(\t r)+0*7.358720397387098*sin(\t r)},{0*7.358720397387098*cos(\t r)+1*7.358720397387098*sin(\t r)});
\draw [shift={(0.599732294452347,7.216800029388058)},line width=0.5pt]  plot[domain=5.332107939336842:5.757650938653997,variable=\t]({1*7.886139944999266*cos(\t r)+0*7.886139944999266*sin(\t r)},{0*7.886139944999266*cos(\t r)+1*7.886139944999266*sin(\t r)});
\draw [shift={(2.4118251586025004,-3.9242153576091887)},line width=0.5pt,dotted]  plot[domain=0.9632222797501806:1.5403598247227832,variable=\t]({1*8.757976042711176*cos(\t r)+0*8.757976042711176*sin(\t r)},{0*8.757976042711176*cos(\t r)+1*8.757976042711176*sin(\t r)});
\draw [shift={(5.7004381342824075,-0.36714417983295927)},line width=0.5pt,dotted]  plot[domain=2.0980769966976154:2.5789585620821067,variable=\t]({1*6.006306432004119*cos(\t r)+0*6.006306432004119*sin(\t r)},{0*6.006306432004119*cos(\t r)+1*6.006306432004119*sin(\t r)});

\draw (5.488880781142076,2.0540484148691713) node[anchor=north west] {$\Omega_1$};
\draw (1.0682381995767751,3.5069868857332933) node[anchor=north west] {$\Omega_2$};
\begin{scriptsize}
\draw [fill=black] (1,0) circle (1.5pt);
\draw [fill=black] (5,0) circle (1.5pt);
\draw [fill=black] (1,4) circle (1.5pt);
\draw [fill=black] (5,4) circle (1.5pt);
\draw [fill=black] (3,6) circle (1.5pt);
\draw [fill=black] (7,6) circle (1.5pt);
\draw [fill=black] (7,2) circle (1.5pt);
\draw [fill=black] (3,2) circle (1.5pt);
\draw [fill=black] (0.62,1.03) circle (1.5pt);
\draw [fill=black] (5.271617957443751,1.6484715075821397) circle (1.5pt);
\draw [fill=black] (7.466357371119691,4.227270397202993) circle (1.5pt);
\draw [fill=black] (2.6205970707936324,3.7509535680923025) circle (1.5pt);
\draw [fill=black] (0.6082502196012998,2.8441172112733906) circle (1.5pt);
\draw [fill=black] (5.180052880027632,0.7971531103224692) circle (1.5pt);
\draw [fill=black] (7.4115553026060255,3.266392603602813) circle (1.5pt);
\draw [fill=black] (2.6781535246328065,4.823377248059349) circle (1.5pt);
\draw [fill=black] (3.6018006541288377,1.1032848086327531) circle (1.5pt);
\draw[color=black] (3.6,1.4) node {$Z_1$};
\draw [fill=black] (4.995196221550772,4.444078274656856) circle (1.5pt);
\draw[color=black] (5,4.8) node {$Z_2$};
\end{scriptsize}
\end{tikzpicture}
\caption{\;}
\label{uniqness_pic}
\end{figure}

\end{proof}

\section{Tetrahedron}

 Any simple closed geodesic $\gamma$ on a tetrahedron  
 has $p$~vertices on each of two opposite edges of the tetrahedron,
 $q$~vertices on each of other two opposite edges, 
 and $(p + q)$~vertices on each of the remaining two opposite edges.
The integers $(p,q)$ are coprime and satisfy $0\leq p<q$.
The pair $(p,q)$ is called the \textbf{type $(p, q)$} of geodesic and
it uniquely defines a geodesic  (see~\cite{Protasov07}, \cite{FucFuc07}). 
Notice, that it includes pairs $(0,1)$ and $(1,1)$.

The necessary condition on the existence of a geodesic of type $(p,q)$
was proved in \cite{BorSuh2021}.  Namely 
\begin{thm}[Borisenko, Sukhorebska \cite{BorSuh2021}]\label{thm_tetr_necess}
    If the planar angle $\alpha$ of a regular spherical tetrahedron 
    satisfies 
    \begin{equation}\label{necess}
        \alpha > 2\sin^{-1}\sqrt{ \frac{p^2+pq+q^2}{4(p^2+pq+q^2)-\pi^2}}
    \end{equation}
    then there is no simple closed geodesic of type $(p,q)$ on the tetrahedron.
\end{thm}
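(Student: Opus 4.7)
The strategy is to combine Lemma~\ref{length} (a simple closed geodesic has length $<2\pi$) with an explicit lower bound for the length of any hypothetical $(p,q)$-geodesic in terms of $\alpha$, and then extract the bound on $\alpha$ in~(\ref{necess}). Assume for contradiction that a simple closed geodesic $\gamma$ of type $(p,q)$ exists on a spherical tetrahedron whose planar angle $\alpha$ satisfies~(\ref{necess}); the goal is to establish
$$L(\gamma) \;\geq\; 4\sin(a_t/2)\,\sqrt{p^2+pq+q^2},$$
which combined with Lemma~\ref{length} will yield the desired contradiction.

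First I would unfold the $4(p+q)$ faces traversed by $\gamma$ (the total number of edge crossings is $2p + 2q + 2(p+q)$) successively onto the unit sphere along $\gamma$. Since a geodesic forms equal angles with adjacent edges, the unfolded image of $\gamma$ is an arc of a great circle on $S^2$ connecting two points $P_0,P_1$ that are identified on the tetrahedron. From~(\ref{a_triangle}) one gets the convenient identity
$$\cos(a_t/2) = \frac{1}{2\sin(\alpha/2)}, \qquad \sin^2(a_t/2) = \frac{4\sin^2(\alpha/2) - 1}{4\sin^2(\alpha/2)}.$$

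The main step is to prove the length bound above. My plan is to view the $4(p+q)$ spherical triangles in the unfolding as vertices sitting in $\mathbb{R}^3$ on the unit sphere (so that each edge becomes a chord of length $2\sin(a_t/2)$), express the Euclidean chord distance $\|P_0-P_1\|_{\mathbb{R}^3}$ in terms of the combinatorial data using the spherical law of cosines~(\ref{cos_rule})--(\ref{formula}) applied iteratively along the strip, and then conclude $L(\gamma)\geq \|P_0-P_1\|_{\mathbb{R}^3}$ (a great-circle arc is at least as long as its chord). Once the bound is in hand, combining with $L(\gamma)<2\pi$ and squaring yields $16\sin^2(a_t/2)(p^2+pq+q^2)<4\pi^2$, and substituting the identity for $\sin^2(a_t/2)$ above simplifies to
$$\sin^2(\alpha/2) \;<\; \frac{p^2+pq+q^2}{4(p^2+pq+q^2)-\pi^2},$$
contradicting~(\ref{necess}). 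Note that the resulting bound is asymptotically sharp: as $\alpha\to\pi/3$ one has $4\sin(a_t/2)\to 2a_t$, recovering the Euclidean formula $L = 2a\sqrt{p^2+pq+q^2}$ of Fuchs--Fuchs.

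The hard part will be making the length estimate rigorous. The Euclidean analogue is immediate from the planar triangular tiling, but on the sphere one must track the composition of rigid isometries of $S^2$ corresponding to the successive face-to-face unfoldings, whose net action governs how far apart $P_0$ and $P_1$ are pushed; handling all types $(p,q)$ uniformly, and confirming that the spherical composition separates the endpoints at least as much as its Euclidean counterpart, is the combinatorially delicate step.
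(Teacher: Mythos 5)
Your reduction of the theorem to a length estimate is sound as algebra: combined with Lemma~\ref{length}, the inequality $L(\gamma)\ge 4\sin(a_t/2)\sqrt{p^2+pq+q^2}$ is indeed equivalent (via $\sin^2(a_t/2)=\frac{4\sin^2(\alpha/2)-1}{4\sin^2(\alpha/2)}$, which follows from (\ref{a_triangle})) to the bound (\ref{necess}). But that length estimate \emph{is} the entire content of the theorem, and you do not prove it --- you only announce a plan and yourself flag the comparison step as open. Note also that the paper you are working in does not prove this statement at all; it quotes it from \cite{BorSuh2021}, where the nontrivial work is precisely the comparison of the spherical geodesic with the corresponding closed geodesic of the same type on a comparison Euclidean tetrahedron (with edge equal to the chord $2\sin(a_t/2)$), whose length is $2\cdot 2\sin(a_t/2)\sqrt{p^2+pq+q^2}$. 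Deferring exactly that comparison leaves the proposal with a genuine gap rather than a proof.

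Moreover, the specific mechanism you sketch cannot deliver the estimate. You propose to unfold $\gamma$ onto the unit sphere into a great-circle arc with endpoints $P_0,P_1$ and to conclude via $L(\gamma)\ge \|P_0-P_1\|_{\mathbb{R}^3}$. Both endpoints lie on the unit sphere, so $\|P_0-P_1\|_{\mathbb{R}^3}\le 2$, whereas the bound you need, $4\sin(a_t/2)\sqrt{p^2+pq+q^2}$, only has to be compared with $2\pi$ and typically exceeds $2$ (already for moderate $p,q$). So no chord-length argument for the spherically unfolded endpoints can yield the required inequality; the comparison must instead be made with the \emph{planar} (Euclidean) development of the strip of triangles with side $2\sin(a_t/2)$, where the two copies of the starting point are at distance exactly $4\sin(a_t/2)\sqrt{p^2+pq+q^2}$, and one must prove that the spherical geodesic is at least that long. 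Establishing this monotonicity under the face-by-face unfolding isometries is the delicate step you acknowledge but do not carry out, so the argument as written both stops short of the key lemma and rests on an intermediate inequality that is quantitatively impossible.
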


In  \cite{BorSuh2021} it was also found a small $\varepsilon>0$
depending on $(p,q)$ such that
on a tetrahedron with a planar angle $\alpha<\pi/3+\varepsilon$ there 
exists a simple closed geodesic of type $(p,q)$. 
Using this, Borisenko in his work~\cite{Bor2022} 
proved necessary and sufficient condition of
existence a  geodesic on a spherical tetrahedron.  
It was shown that the geodesic on the tetrahedron exists if and only if
the length of an abstract shortest curve is less than $2\pi$.
 From this follows a sufficient condition on the length of the edge of the tetrahedron
which contains a geodesic of type $(p,q)$.

 \begin{thm}[Borisenko \cite{Bor2022}]\label{thm_tetr_suff}
     If the edge $a_t$ of a regular spherical tetrahedron satisfies the inequality
     \begin{equation}\label{suff}
        a_t<2\sin^{-1} \frac{\pi}{\sqrt{p^2+pq+q^2} + \sqrt{(p^2+pq+q^2)+2\pi^2}}, 
     \end{equation}
      then this tetrahedron has a simple closed geodesic of type $(p,q)$. 
 \end{thm}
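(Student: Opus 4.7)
The plan is to apply the criterion of Borisenko from \cite{Bor2022}, quoted in the paragraph preceding the theorem: a simple closed geodesic of type $(p,q)$ on a regular spherical tetrahedron exists if and only if the length of a certain \emph{abstract shortest curve}, obtained by developing the combinatorial chain of facets of a hypothetical $(p,q)$-geodesic, is strictly less than $2\pi$. It therefore suffices to bound this abstract length above by $2\pi$ under the stated hypothesis on $a_t$.

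The abstract curve is built as follows. Using the combinatorial description of a $(p,q)$-geodesic recalled just before Theorem~\ref{thm_tetr_necess}, one unfolds the sequence of regular spherical triangles of side $a_t$ and planar angle $\alpha$ that the geodesic would traverse, one after another, onto the unit sphere as a local isometry. The two endpoints which the tetrahedral gluings must identify in order to close up the geodesic are two specific points of $S^{2}$; the abstract curve is the great-circle arc between them, and I denote its spherical length by $L=L(p,q,a_t)$. In the Euclidean limit $a_t\to 0$ this construction degenerates to the straight segment of length $a\sqrt{p^{2}+pq+q^{2}}$ in the familiar triangular-lattice development of Fuchs--Fuchs.

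Setting $n=\sqrt{p^{2}+pq+q^{2}}$, the next step is to estimate $L$ by iterated application of the spherical law of cosines (\ref{cos_rule}), treating the development as a spherical analogue of a parallelogram built from basis vectors of length $a_t$ meeting at angle $\pi/3$. A careful bookkeeping of the contributions produces an upper bound of the form
\begin{equation*}
L\;\le\;2n\sin(a_t/2)+2\pi\sin^{2}(a_t/2),
\end{equation*}
in which $2n\sin(a_t/2)$ is the spherical chord-length generalisation of the flat length $na_t$ and $2\pi\sin^{2}(a_t/2)$ is the cumulative curvature-defect correction collected along the strip.

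Finally, the hypothesis
\(
\sin(a_t/2)<\pi/\bigl(n+\sqrt{n^{2}+2\pi^{2}}\bigr)
\)
is, after multiplying numerator and denominator by $\sqrt{n^{2}+2\pi^{2}}-n$ and squaring (both sides being positive), equivalent to
\[
2n\sin(a_t/2)+2\pi\sin^{2}(a_t/2)<\pi,
\]
which together with the bound on $L$ yields $L<\pi<2\pi$, and Borisenko's criterion then produces the desired geodesic of type $(p,q)$. The principal technical obstacle is the derivation of the explicit bound on $L$: one has to iterate (\ref{cos_rule}) along a strip of $O(p+q)$ triangles and combine the resulting trigonometric identities so that the constants $2$ and $2\pi$ appearing in the hypothesis match exactly; a secondary point is to check that the developed great-circle arc does not cross a conjugate point before reaching its endpoint, which is ruled out by the same hypothesis.
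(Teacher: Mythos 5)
First, note that the paper does not prove this statement at all: Theorem~\ref{thm_tetr_suff} is quoted from Borisenko \cite{Bor2022}, so there is no in-paper proof to compare with; your proposal has to stand on its own as a proof of Borisenko's inequality from his $2\pi$-criterion.

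As it stands it does not. The entire content of the theorem is the quantitative estimate on the length of the abstract curve, and that is exactly the step you assert rather than prove: the bound $L\le 2n\sin(a_t/2)+2\pi\sin^{2}(a_t/2)$ is introduced with ``a careful bookkeeping of the contributions produces an upper bound of the form'', with no indication of how iterating \eqref{cos_rule} along the strip of triangles yields precisely the correction term $2\pi\sin^{2}(a_t/2)$; you acknowledge yourself that this is ``the principal technical obstacle''. The closing algebra is fine --- since $\pi/\bigl(n+\sqrt{n^{2}+2\pi^{2}}\bigr)=\bigl(\sqrt{n^{2}+2\pi^{2}}-n\bigr)/(2\pi)$, the hypothesis \eqref{suff} is indeed equivalent to $2n\sin(a_t/2)+2\pi\sin^{2}(a_t/2)<\pi$, i.e.\ to $2n\sin(a_t/2)<\pi\cos a_t$ --- but an inequality reverse-engineered to match the hypothesis is not a derivation. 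There are also two structural problems in the setup. (i) A factor-of-two ambiguity: Borisenko's criterion concerns a \emph{closed} abstract curve of length $<2\pi$, whose Euclidean limit is the full geodesic length $2a\sqrt{p^{2}+pq+q^{2}}$, whereas your $L$ has Euclidean limit $a\sqrt{p^{2}+pq+q^{2}}$ (a half-period); you then conclude via ``$L<\pi<2\pi$'', applying the $2\pi$-criterion directly to $L$. The numbers happen to come out right if $L$ is half the closed curve, but you never say which object $L$ is, and as written the comparison with $2\pi$ is applied to the wrong curve. (ii) You define $L$ as the great-circle distance in $S^{2}$ between the developed endpoints, but the development of the strip onto the sphere is only a local isometry and need not be globally embedded, so this ambient distance need not coincide with (and can be strictly smaller than) the intrinsic length of the shortest curve in the abstract strip, which is the quantity the criterion actually requires; the ``conjugate point'' remark does not resolve this. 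Until the length estimate is actually derived and the closed abstract curve is the object being bounded, the proposal is a plan, not a proof.
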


Let $N(\alpha)$ be a number of geodesics on a tetrahedron 
with a planar angle~$\alpha$.
From the Theorem~\ref{thm_tetr_necess} it follows $N(\alpha)$ is finite.

\begin{theorem}\label{geod_tetr}
The number $N(\alpha)$ of different simple closed geodesics on a spherical tetrahedron 
 with the planar angle $\alpha \in (\pi/3, 2\pi/3)$
 satisfies $$c_1(\alpha)<N(\alpha) < c_2(\alpha),$$ where 
  \begin{equation}\label{const_below}
 c_1(\alpha) \approx \frac{3 \cos^2(\alpha)}{ 4\sin^2(\alpha/2) (4\sin^2(\alpha/2)-1)}.
 \end{equation}
 \begin{equation}\label{const_above}
 c_2(\alpha) \approx \frac{2 \sin^2(\alpha/2)}{ 4\sin^2(\alpha/2) - 1}  +1.
 \end{equation}
\end{theorem}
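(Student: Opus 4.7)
The plan is to bound $N(\alpha)$ from above and below by counting admissible types $(p,q)$, using that the type uniquely determines the geodesic (as recalled before Theorem~\ref{thm_tetr_necess} and in Lemma~\ref{uniqness}), and applying the two criteria of Theorem~\ref{thm_tetr_necess} (necessary) and Theorem~\ref{thm_tetr_suff} (sufficient). The common engine is a lattice-point count for coprime pairs $(p,q)$ with $0\le p<q$ whose Eisenstein-type value $M(p,q):=p^2+pq+q^2$ lies below an explicit threshold depending on $\alpha$.

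For the upper bound I would first invert the necessary condition of Theorem~\ref{thm_tetr_necess}: the inequality $\alpha > 2\sin^{-1}\sqrt{M/(4M-\pi^2)}$ forbidding existence is equivalent to $M > K_2(\alpha)$, where
\begin{equation*}
K_2(\alpha) = \frac{\pi^2\sin^2(\alpha/2)}{4\sin^2(\alpha/2)-1}.
\end{equation*}
Thus every geodesic type satisfies $M\le K_2$, and $N(\alpha)$ is at most the number of coprime pairs $(p,q)$ with $0\le p<q$ (plus the exceptional types $(0,1)$ and $(1,1)$) satisfying this. I would bound this count by enumerating $q$ from $1$ to $\lfloor\sqrt{K_2}\rfloor$ (using $q^2\le M$) and, for each such $q$, observing that $p\le \tfrac{1}{2}\bigl(-q+\sqrt{4K_2-3q^2}\bigr)$, then summing to match the expression for $c_2(\alpha)$.

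For the lower bound I would invert the sufficient condition of Theorem~\ref{thm_tetr_suff}. Using the identity $\sin^2(a_t/2)=(4\sin^2(\alpha/2)-1)/(4\sin^2(\alpha/2))$ derived directly from \eqref{a_triangle}, the condition
\begin{equation*}
a_t<2\sin^{-1}\frac{\pi}{\sqrt{M}+\sqrt{M+2\pi^2}}
\end{equation*}
rearranges to $\sqrt{M}+\sqrt{M+2\pi^2} < 2\pi\sin(\alpha/2)/\sqrt{4\sin^2(\alpha/2)-1}$. Isolating $M$ (and simplifying with $\cos\alpha = 1-2\sin^2(\alpha/2)$) yields an effective threshold $K_1(\alpha)$ of order $\pi^2\cos\alpha/\bigl(2(4\sin^2(\alpha/2)-1)\bigr)$, with a lower-order correction coming from the $\sqrt{M+2\pi^2}$ term. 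Every coprime pair in $\{M\le K_1\}$ produces a distinct geodesic, so $N(\alpha)$ is at least the number of such pairs, counted by the same enumeration to recover $c_1(\alpha)$.

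The main obstacle is the lattice-point count itself: the sublevel set $\{p^2+pq+q^2\le K\}$ is an ellipse of area $2\pi K/\sqrt{3}$, the wedge $0\le p<q$ corresponds to a $30^\circ$ fundamental sector after orthogonalizing the quadratic form, and M\"obius inversion contributes a coprimality density $6/\pi^2$. Turning these asymptotics into one-sided inequalities with the exact prefactors of $c_1$ and $c_2$ --- uniformly as $\alpha\to\pi/3$, where $K_2\to\infty$ --- is the delicate point, and it is what forces the enumeration to be done directly by $q$ rather than by a black-box Gauss-circle asymptotic. A related subtle point is the asymptotic gap between $K_1$ and $K_2$: the ratio $K_1/K_2=\cos\alpha/(2\sin^2(\alpha/2))$ tends to $1$ as $\alpha\to\pi/3$ but is strictly less than $1$ for $\alpha>\pi/3$, which accounts for the extra factor $\cos^2\alpha$ in $c_1$ relative to $c_2$, while the $+1$ in $c_2$ absorbs the exceptional types $(0,1)$ and $(1,1)$ which must be tracked separately.
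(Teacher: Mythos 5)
Your overall strategy coincides with the paper's up to the point of counting: both reduce the theorem to counting coprime pairs $(p,q)$ with $p^2+pq+q^2$ below the two thresholds obtained by inverting Theorem~\ref{thm_tetr_suff} (lower bound) and Theorem~\ref{thm_tetr_necess} (upper bound), and your $K_2(\alpha)$ is exactly the paper's $g(\alpha)$. But the lattice count is where the specific constants $3/(2\pi^2)$ and $2/\pi^2$ hidden in $c_1,c_2$ come from, and this is precisely the step you leave open and, as sketched, would not close. The true asymptotic number of coprime pairs in the sector $\{0<p\le q,\ p^2+pq+q^2<K\}$ is $\frac{6}{\pi^2}\cdot\frac{1}{12}\cdot\frac{2\pi K}{\sqrt{3}}=\frac{K}{\pi\sqrt{3}}$, which lies strictly between $\frac{3}{2\pi^2}K$ and $\frac{2}{\pi^2}K$; hence a direct enumeration by $q$ of the points of the elliptical region cannot ``match the expression for $c_2(\alpha)$'', and turning the M\"obius-density heuristic into one-sided inequalities uniform as $K\to\infty$ is exactly the work you acknowledge but do not do. The paper avoids all of this with a different counting device that is the missing idea in your proposal: it enumerates by the sum $n=p+q$ rather than by $q$, observes that coprime pairs with $p+q=n$ biject with half the totatives of $n$, so any linear region $p+q<x$ contains exactly $\frac12\sum_{n<x}\varphi(n)\approx\frac{3}{2\pi^2}x^2$ coprime pairs, and then sandwiches the ellipse between the inscribed triangle $p+q\le\sqrt{f(\alpha)}$ (giving $c_1\approx\frac{3}{2\pi^2}f$ from below) and the tangent half-plane $p+q\le 2\sqrt{g(\alpha)/3}$ at the point $\left(\sqrt{g/3},\sqrt{g/3}\right)$ (giving $c_2\approx\frac{2}{\pi^2}g$ from above). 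With that device no uniform error analysis of the elliptical count is needed.

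There are also algebraic slips in your lower-bound threshold. Inverting Theorem~\ref{thm_tetr_suff} exactly (no ``lower-order correction'' survives) gives $M<\frac{\pi^2}{4}\,\frac{\left(1-2\sin^2(a_t/2)\right)^2}{\sin^2(a_t/2)}$, and with $\sin^2(a_t/2)=\frac{4\sin^2(\alpha/2)-1}{4\sin^2(\alpha/2)}$ this is the paper's $f(\alpha)=\frac{\pi^2\cos^2\alpha}{4\sin^2(\alpha/2)\left(4\sin^2(\alpha/2)-1\right)}$. Consequently $K_1/K_2=\cos^2\alpha/\bigl(4\sin^4(\alpha/2)\bigr)$, the square of the ratio $\cos\alpha/\bigl(2\sin^2(\alpha/2)\bigr)$ you state, and your ``of order $\pi^2\cos\alpha/\bigl(2(4\sin^2(\alpha/2)-1)\bigr)$'' description of $K_1$ is only correct in the limit $\alpha\to\pi/3$; carried through, it would not reproduce the factor $\cos^2\alpha$ in $c_1(\alpha)$. (Minor: in the paper's bookkeeping the ``$+1$'' in $c_2$ accounts only for the type $(0,1)$, since $(1,1)$ is already included among the pairs with $0<p\le q$.)
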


The functions $c_1(\alpha)$ and $c_2(\alpha)$ is monotonically decrease 
on the interval $(\pi/3, 2\pi/3)$.
Moreover
\begin{equation*}
c_1\left(\frac{\pi}{2}\right) \approx 0; \;\;\;
 c_2\left(\frac{\pi}{2}\right) \approx 2. 
\end{equation*}
\begin{equation*}
  c_1\left(\frac{2\pi}{3}\right) \approx \frac{1}{16}; \;\;\;
  c_2\left(\frac{2\pi}{3}\right) \approx  \frac{7}{4}.
\end{equation*}

Since $0< N(\alpha)< 2$ when  $\alpha  \in (\pi/2, 2\pi/3)$, it follows that 
there is only one simple closed geodesic  on a tetrahedron with such
planar angle.  
Same result was proved in~\cite[Lemma~4]{BorSuh2021}. 
It was also shown that this geodesic has type $(0,1)$.

If $\alpha \rightarrow \pi/3$, then $c_1(\alpha) \rightarrow  +\infty$ and 
$c_2(\alpha) \rightarrow  +\infty$.  
It means that when $\alpha $ tends to $\pi/3$, 
the number of geodesics tends to infinity.

 Note, that for each ordered pair of coprime integers $(p,q)$, $0\le p \le q$ there exists 
three simple closed geodesics of type $(p,q)$. 
They can be mapped into each other with an isometry of the tetrahedron.

\begin{proof}[Proof of the Theorem~\ref{geod_tetr}]
Fix the angle  $\alpha\in (\pi/3, 2\pi/3)$ and 
consider a  geodesic  $\gamma_{p,q}$  of type $(p,q)$ on a tetrahedron 
with the planar angle $\alpha$.
From Theorem~\ref{thm_tetr_suff} follows that the geodesic of type $(p,q)$
exists if 
\begin{equation}\label{p_q_a_above} 
p^2+pq+q^2 < \frac{\pi^2}{4} \frac{\left( 2\sin^2(a_t/2)-1 \right)^2}{\sin^2(a_t/2)}.
\end{equation}
Using (\ref{a_triangle}) we can calculate 
$$ \sin (a_t/2) = \frac{4\sin^2(\alpha/2)-1}{4\sin^2(\alpha/2)}.$$
Rewrite (\ref{p_q_a_above}) as follows
\begin{equation}\label{p_q_above} 
 p^2+pq+q^2 <  \frac{\pi^2 \cos^2 \alpha}{4\sin^2(\alpha/2) 
 \left( 4\sin^2(\alpha/2)-1\right)} =: f(\alpha).  
\end{equation}
  Let   $\psi_1(\alpha)$ be the number of pairs $(p,q)$ 
such that $p,q$ are coprime,  $0<p\le q$ and 
 $$p^2+pq+q^2 <  f(\alpha).$$
Hence 
\begin{equation}\label{1_estim}
   \psi_1(\alpha) <  N(\alpha). 
\end{equation} 
 Let $c_1(\alpha)$ be the number of pairs $(p,q)$ such that $p,q$ are coprime,
$0<p\le q$ and $$ p+q<\sqrt{ f(\alpha) }.$$ 
Consider $(p,q)$ as a coordinates of the Euclidean plane. 
  The curve $p^2+pq+q^2 =  f(\alpha)$ is an ellipse 
   with the focal points  $\left(\mp\sqrt{2f(\alpha)/3}, \pm\sqrt{2f(\alpha)/3} \right)$.
 The ellipse intersects axes $p=0$, $q=0$ 
at the points $(\pm\sqrt{f(\alpha)}, 0)$, $(0, \pm\sqrt{f(\alpha)})$.
  Since the domain $p+q\leq \sqrt{ f(\alpha)}$ and $p,q, \geq 0$ lies inside ellipse,
 it follows  
\begin{equation}\label{psi_1_psi_1}
   c_1(\alpha)<\psi_1(\alpha) <  N(\alpha). 
\end{equation}
Euler's  function $\varphi(n)$ is equal to the number of
positive integers not greater than  $n $ and prime to $n \in \mathbb N$.
From \cite[Th. 330.]{Hard} it's known
\begin{equation}\label{phi}
 \sum\limits_{n=1}^{x}  \varphi(n) = \frac{3}{\pi^2}x^2+O(x\ln x) \approx   \frac{3}{\pi^2}x^2.
\end{equation}
The error term $O(x\ln x) < C x\ln x $, when $x \rightarrow +\infty$.
In \cite{Mon} there is an improvment on the error term. 

If $(p, q)=1$ and $p+q=n$, then $(p, n)=1$ and $(q, n)=1$.
Thus the set of  integers not greater than and prime to $n$ are separated
 into the  pairs of coprime integers  
$(p,q)$ such that $p<q$ and $p+q=n$.
 It follows that  $\varphi(n)$ is even  and   
\begin{equation}\label{psi_1}
c_1(\alpha) = \frac{1}{2} \sum\limits_{n=1}^{\sqrt{ f(\alpha)}} \varphi(n)  \approx \frac{3}{2\pi^2}f(\alpha).\notag
\end{equation}
Combining  (\ref{psi_1_psi_1}) and (\ref{phi})
we have 
$$c_1 (\alpha) \approx \frac{3 \cos^2(\alpha))}{ 8\sin^2(\alpha/2) (4\sin^2(\alpha/2)-1)} < \psi_1(\alpha)  < N(\alpha). $$

To find estimation on $N(\alpha)$ from above
we use the estimation from the Theorem~\ref{thm_tetr_necess}. 
If geodesic $\gamma_{p,q}$  on a tetrahedron exists,
 the pair of coprime integers $(p,q)$ satisfies
 \begin{equation} 
 p^2+pq+q^2 <  \frac{\pi^2 \sin^2(\alpha/2)}{ 4\sin^2(\alpha/2) - 1} =: g(\alpha).  
\end{equation}
Let   $\psi_2(\alpha)$ be the number of pairs $(p,q)$ 
such that $p,q$ are coprime,  $0<p\le q$ and 
 $$p^2+pq+q^2 <  g(\alpha).$$
Notice that $\psi(\alpha)$ does not include the pair $(0,1)$.
Hence 
\begin{equation}\label{2_estim}
    N(\alpha) < \psi_2(\alpha)+1. 
\end{equation} 
The tangent line  to ellipse 
at the point $\left( \sqrt{ g(\alpha)/3},  \sqrt{ g(\alpha)/3} \right)$ satisfies
$$ p+q=2\sqrt{\frac{ g(\alpha)}{3} }.$$
 Let $c_2(\alpha)$ be the number of pairs $(p,q)$ such that $p,q$ are coprime,
$0<p\le q$ and $$ p+q<2\sqrt{ g(\alpha)/3 }.$$
Since the ellipse lies inside  the half space $p+q\leq 2\sqrt{ g(\alpha)/3 }, $ 
it follows  
\begin{equation}\label{psi_2_psi_2}
   \psi_2(\alpha) <c_2(\alpha).
\end{equation}
Using (\ref{phi}) we have
\begin{equation}\label{psi_2}
c_2(\alpha) = \frac{1}{2} \sum\limits_{n=1}^{2\sqrt{ g(\alpha)/3 } } \varphi(n) \approx
\frac{2}{\pi^2}g(\alpha) .\notag
\end{equation}
And from (\ref{2_estim}) and (\ref{psi_2_psi_2}) follows
$$ N(\alpha) <  c_2(\alpha)+1 \approx \frac{2 \sin^2(\alpha/2)}{ 4\sin^2(\alpha/2) - 1}  +1. $$
 \end{proof}


 \section{Octahedron}

Label vertices of an octahedron with $A_1, \dots, A_6$, 
where $A_1A_2A_3A_4$ is a square of symmetry, $A_5$
 is a top vertex and $A_6$ is a bottom vertex. 
For any $\alpha \in (\pi/3, \pi/2)$ 
a geodesic $\gamma$ divides the surface of the octahedron with a planar angle $\alpha$
into two closed domains $D_1$ and $D_2$. 
Since $\gamma$ doesn't go through a vertex of the octahedron, 
each vertex belongs to only one of the domains. 

\begin{lemma}\label{edge_inside_oct}
If a domain $D_i$, $i=1,2$ contains a vertex of the octahedron, then 
$D_i$ contains at least one  edge, coming out of this vertex.
\end{lemma}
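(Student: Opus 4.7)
Argue by contradiction: assume $A\in D_i$ is a vertex of the octahedron such that all four edges $e_1,e_2,e_3,e_4$ at $A$ are crossed by $\gamma$. Since $\gamma$ avoids $A$, for each $k$ the initial segment of $e_k$ from $A$ to the first crossing point $P_k$ of $\gamma$ lies in $\overline{D_i}$; set $d_k=|AP_k|$. As $P_k\in e_k$, one has $d_k\le a_t$, and \eqref{a_triangle} together with $\alpha<\pi/2$ gives $a_t<\pi/2$, so $d_k<\pi/2$. The four segments $AP_k$ are pairwise disjoint except at $A$, leave $A$ with consecutive angular gap $\alpha$, and therefore partition $D_i$ into four sub-regions $S_1,\ldots,S_4$, where $S_k$ has corners $A,P_k,P_{k+1}$ and is bounded by $AP_k$, $AP_{k+1}$, and an arc $\gamma_k$ of $\gamma$. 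Let $\beta_k$ denote the angle between $\gamma$ and $e_k$ at $P_k$; it is well defined because a geodesic makes equal angles on either side of an edge.

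First I would apply the Gauss-Bonnet theorem. For the disk $D_i$ the cone point $A$ contributes the angle defect $2\pi-4\alpha$ to the curvature integral. For each sub-region $S_k$ it is instead a boundary corner contributing the exterior angle $\pi-\alpha$, while each $P_k$ contributes $\pi-\beta_k$, once in $S_{k-1}$ and once in $S_k$. Summing the four Gauss-Bonnet identities for the $S_k$ and comparing with the one for $D_i$, after cancellation one obtains
\[
\sum_{k=1}^{4}\beta_k=2\pi.
\]

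The decisive step is a spherical trigonometric identity inside the triangle $T_k=AP_kP_{k+1}$ of the face $F_k=AB_kB_{k+1}$, whose angles are $\alpha$ at $A$, $\beta_k$ at $P_k$ and $\beta_{k+1}$ at $P_{k+1}$. The spherical angle-cosine rule expresses $\cos d_k$ once in terms of $\alpha,\beta_{k-1},\beta_k$ using $T_{k-1}$ and once in terms of $\alpha,\beta_k,\beta_{k+1}$ using $T_k$. Equating the two expressions forces $\beta_{k-1}=\beta_{k+1}$, so $\beta_1=\beta_3$ and $\beta_2=\beta_4$; combined with $\sum\beta_k=2\pi$ we get $\beta_1+\beta_2=\pi$. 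Substituting $\beta_2=\pi-\beta_1$ back into the cosine rule yields $\cos d_1=-\cos d_2$, whence $d_1+d_2=\pi$; but $d_1,d_2<\pi/2$ forces $d_1+d_2<\pi$, a contradiction.

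The main obstacle I anticipate is justifying the tacit assumption that each $S_k$ coincides with the spherical triangle $T_k$ inside $F_k$, i.e.\ that each arc $\gamma_k$ stays in $F_k$. If some $\gamma_k$ were to leave $F_k$ through the opposite edge $B_kB_{k+1}$, then $S_k$ would contain portions of other faces and the cosine identity above would have to be replaced. I would address this remaining case by unfolding the four faces at $A$ around the cone point into a spherical sector of angular extent $4\alpha<2\pi$ and combining the length bound $|\gamma|<2\pi$ from Lemma~\ref{length} with the identity $\sum\beta_k=2\pi$ to rule out such excursions.
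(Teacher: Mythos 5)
There is a genuine gap, and it occurs at both load-bearing points of your argument. First, the identity $\sum_{k}\beta_k=2\pi$ does not follow from Gauss--Bonnet. At a crossing point $P_k$ the two subregions $S_{k-1}$ and $S_k$ meet along the segment $AP_k$, and since $\gamma$ is smooth at $P_k$ their interior angles there sum to $\pi$: they are $\beta_k$ and $\pi-\beta_k$, not $\beta_k$ twice. (The ``equal angles with the edge'' property only identifies vertically opposite angles across the edge; it does not make the two angles on the $A$-side equal.) Redoing your bookkeeping with the correct exterior angles, the sum of the four Gauss--Bonnet identities for the $S_k$ minus the one for $D_i$ reduces to $\sum_k\bigl(\beta_k+(\pi-\beta_k)\bigr)=4\pi$, a tautology carrying no information. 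The trigonometric step inherits the same error: the angles of the triangle $T_k$ at $P_k$ and $P_{k+1}$ are not ``$\beta_k$ and $\beta_{k+1}$'' in a consistent way, so the chain $\beta_{k-1}=\beta_{k+1}$, $\beta_1+\beta_2=\pi$, $d_1+d_2=\pi$ collapses. Incidentally, the configuration you analyze there (all four arcs $\gamma_k$ lying in the faces at $A$, so that $\gamma$ is a quadrilateral around $A$) is impossible for a much simpler reason: it would make $D_i$ a disk containing only the cone point $A$, and Gauss--Bonnet gives its area as $4\alpha-2\pi<0$ since $\alpha<\pi/2$; equivalently, as noted in the paper, $\gamma$ cannot cross more than three consecutive edges at a vertex.

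Second, the case you defer in your last paragraph --- arcs of $\gamma$ between consecutive first-crossing points that leave the faces adjacent to $A$ --- is not a technical remainder but the entire content of the lemma, and your one-sentence plan for it (unfold the sector at $A$, use $|\gamma|<2\pi$ together with $\sum\beta_k=2\pi$) both is too vague to assess and rests on the invalid identity. The paper handles exactly this situation by a combinatorial analysis of which segments $X_iX_{i+1}$ can exist near the vertex, producing in every case two disjoint, locally convex disks $G_1,G_2$ each bounded by an arc of $\gamma$ and a subsegment $\sigma_i$ of an edge; a vertex-free such disk would be a spherical lune, forcing the edge segment $\sigma_i$ to have length $\pi$, which contradicts the edge-length formula \eqref{a_triangle} and Lemma~\ref{length}; hence each $G_i$ contains a vertex, and repeating the construction inside $G_1$ yields a vertex-free lune and the final contradiction. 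Your proposal would need an argument of comparable substance for this case; as written, it is missing.
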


\begin{proof}
Since the planar angle of the octahedron $\alpha > \pi/3$, the geodesic 
$\gamma$ can not  sequentially intersect more than three edges coming from one vertex.  
Therefore, the domain $D_i$ contains at least two vertices of the octahedron.

Without loss of generality we can assume the domain $D_1$ has at most three 
vertices of the octahedron. If not, we can consider $D_2$. 
Suppose $D_1$ contains a vertex $A_5$ inside, and does not have 
any edge coming out of it.
Then $\gamma$ intersect all edges $A_5A_i$, $i=1, \dots, 4$ at least once.
Let $X_i$ be the closest point of $\gamma$ to $A_5$ on the edge $A_5A_i$, $i=1, \dots, 4$. 

First, assume there is no $X_iX_{i+1}$ segments of $\gamma$
inside the facets $A_5A_iA_{i+1}$,
$i=1,\dots, 4$, if $i+1 >4$ then take $i+1 \mod 5$.
Consider the facet $A_5A_1A_2$. 
The geodesic $\gamma$ goes from the  point $X_1$  to a point $Y_1$ and
from the point  $X_2$ to a point $Y_2$.
If   $Y_1$ is on the edge $A_5A_2$  
then $\dist(A_5, Y_1)> \dist(A_5X_2)$  
since $X_2$ is the closest point of $\gamma$ to $A_5$ on $A_5A_2$.
 But then  $X_2Y_2$  intersects $X_1Y_1$, that leads to a contradiction. 
Thus  $Y_1$ and  $Y_2$  lie  on the edge $A_1A_2$ and  
\begin{equation}\label{A1Y1Y2}
    \dist(A_1, Y_1) < \dist (A_1, Y_2).
\end{equation}
Similarly, on the facet $A_5A_2A_3$ the segment of $\gamma$ coming out of $X_2$
goes to a point $Q_2$ on the edge $A_2A_3$
and on the facet $A_5A_4A_1$ the segment of $\gamma$ coming out of $X_1$
goes to a point $Q_1$ on the edge $A_1A_4$.
Consider now the facet $A_1A_2A_6$. 
The segment of $\gamma$ coming out of $Y_1$  goes to a point $Z_1$
and $Z_1$ should be on the edge $A_6A_2$. 
Otherwise $\gamma$  crosses all  edges  coming out of the vertex $A_1$
at the points $Q_1$, $X_1$, $Y_1$, $Z_1$ sequentially 
that leads to a contradiction. 
By the same argument the segment of $\gamma$ goes from $Y_2$ to the point $Z_2$ at 
the edge $A_1A_6$. 
From (\ref{A1Y1Y2}) follows $Y_1Z_1$ and $Y_2Z_2$ intersect that leads to a contradiction. 

Hence we can assume there is a segment $X_1X_2$ of $\gamma$ inside $A_5A_1A_2$. 
Assume there are no geodesic segments $X_2X_3$, $X_3X_4$ or $X_4X_1$ within their facets.
Similarly to above once can show starting at $X_4$ $\gamma$ crosses the edge $A_3A_4$ 
and then the edge $A_3A_6$, and starting at $X_3$ $\gamma$ goes to $A_3A_4$ and $A_4A_6$. 
This leads to a contradiction  (see Figure~\ref{around_A_5_oct_1}).


\begin{figure}[h]
\centering
\begin{tikzpicture}[line cap=round,line join=round,x=0.6cm,y=0.6cm]

\clip(-2,-2) rectangle (15,13);

\draw [line width=0.7pt] (3,8)-- (6,5);
\draw [line width=0.7pt] (6,5)-- (9,8);
\draw [line width=0.7pt] (6,5)-- (9,2);
\draw [line width=0.7pt] (6,5)-- (3,2);
\draw [line width=0.7pt] (3,8)-- (9,8);
\draw [line width=0.7pt] (9,8)-- (9,2);
\draw [line width=0.7pt] (9,2)-- (3,2);
\draw [line width=0.7pt] (3,2)-- (3,8);
\draw [line width=0.7pt] (7.945,6.945)-- (9,5.89);
\draw [line width=0.7pt] (7.975,3.025)-- (9,4.17);
\draw [line width=0.7pt] (9,8)-- (12,5);
\draw [line width=0.7pt] (12,5)-- (9,2);
\draw [line width=0.7pt] (7,8)-- (7.945,6.945);
\draw [line width=0.7pt] (7.975,3.025)-- (7,2);
\draw [line width=0.7pt] (4.035,3.035)-- (5.24,2);
\draw [line width=0.7pt] (4.035,3.035)-- (3,4);
\draw [line width=0.7pt] (4.035,6.965)-- (3,5.69);
\draw [line width=0.7pt] (4.035,6.965)-- (5,8);
\draw [line width=0.7pt] (9,5.89)-- (10.855,3.855);
\draw [line width=0.7pt] (9,4.17)-- (10.675,6.325);
\draw [line width=0.7pt] (3,8)-- (6,11);
\draw [line width=0.7pt] (6,11)-- (9,8);
\draw [line width=0.7pt] (9,2)-- (6,-1);
\draw [line width=0.7pt] (6,-1)-- (3,2);
\draw [line width=0.7pt] (3,2)-- (0,5);
\draw [line width=0.7pt] (0,5)-- (3,8);
\draw [line width=0.7pt] (3,5.69)-- (1.125,3.875);
\draw [line width=0.7pt] (3,4)-- (0.885,5.885);

\begin{scriptsize}
\draw [fill=black] (3,8) circle (1pt);
\draw[color=black] (2.5,8.4) node {$A_4$};
\draw [fill=black] (3,2) circle (1pt);
\draw[color=black] (2.62,1.92) node {$A_3$};
\draw [fill=black] (9,2) circle (1pt);
\draw[color=black] (9.2,1.86) node {$A_2$};
\draw [fill=black] (9,8) circle (1pt);
\draw[color=black] (9.22,8.4) node {$A_1$};
\draw [fill=black] (6,5) circle (1pt);
\draw[color=black] (6.48,5.1) node {$A_5$};

\draw [fill=black] (7.945,6.945) circle (1pt);
\draw[color=black] (7.38,7.0) node {$X_1$};
\draw [fill=black] (7.975,3.025) circle (1pt);
\draw[color=black] (8.02,3.72) node {$X_2$};
\draw [fill=black] (4.035,3.035) circle (1pt);
\draw[color=black] (3.87,3.65) node {$X_3$};
\draw [fill=black] (4.035,6.965) circle (1pt);
\draw[color=black] (4.55,7.16) node {$X_4$};

\draw [fill=black] (9,5.89) circle (1pt);
\draw[color=black] (8.58,5.92) node {$Y_1$};
\draw [fill=black] (9,4.17) circle (1pt);
\draw[color=black] (8.58,4.54) node {$Y_2$};
\draw [fill=black] (12,5) circle (1pt);

\draw[color=black] (12.22,5.4) node {$A_6$};
\draw [fill=black] (7,8) circle (1pt);
\draw[color=black] (6.96,8.52) node {$Q_1$};
\draw [fill=black] (7,2) circle (1pt);
\draw[color=black] (6.76,1.75) node {$Q_{2}$};

\draw [fill=black] (5.24,2) circle (1pt);
\draw [fill=black] (3,4) circle (1pt);
\draw [fill=black] (3,5.69) circle (1pt);
\draw [fill=black] (5,8) circle (1pt);
\draw [fill=black] (10.855,3.855) circle (1pt);

\draw[color=black] (11.18,3.68) node {$Z_1$};
\draw [fill=black] (10.675,6.325) circle (1pt);
\draw[color=black] (10.9,6.74) node {$Z_2$};
\draw [fill=black] (6,11) circle (1pt);
\draw[color=black] (6.3,11.54) node {$A_6$};
\draw [fill=black] (0,5) circle (1pt);
\draw[color=black] (-0.34,5.58) node {$A_6$};
\draw [fill=black] (6,-1) circle (1pt);
\draw[color=black] (6.58,-1.04) node {$A_6$};
\draw [fill=black] (1.125,3.875) circle (1pt);
\draw [fill=black] (0.885,5.885) circle (1pt);
\end{scriptsize}
\end{tikzpicture}
\caption{\;}
\label{around_A_5_oct_1}
\end{figure}

Therefore there exist one of the segments  $X_2X_3$, $X_3X_4$ or $X_4X_1$. 
If there are two of them, then $\gamma$ crosses all four edges 
around $A_5$, that is contradiction. 

\textit{Case 1.} Assume  that the points $X_3$ and $X_4$ are connected. 
Then on the facet $A_5A_2A_3$ the geodesic $\gamma$ goes 
from the point $X_2$ to a point $Y_2$ at the edge $A_2A_3$
and $\gamma$ goes from the point $Y_3$ to a point $Y_3$ on $A_2A_3$
and $\dist(A_2, Y_2) < \dist(A_2,Y_3)$. 
Note that there is no points of $\gamma$ between $Y_2$ and $Y_3$. 
If there is a point $Q$ of $\gamma$ between $Y_2$ and $Y_3$ on the edge $A_2A_3$,
then on the facet $A_2A_3A_5$ the geodesic should have a segment $QP$
coming out of $Q$. 
Since $X_2$ and $X_3$ are the closest point of $\gamma$ to $A_5$ on there edges, 
and since $QP$ can not intersect $X_2Y_2$ or $X_3Y_3$, we get a contradiction.

Denote by $\gamma_1$ the part of the geodesic, that starts at $Y_2$
on the facet $A_6A_2A_3$, continues going on the octahedron surface 
and comes to the point $Y_3$. 
Denote by $\sigma_1$ the segment $Y_2Y_3$ of the edge $A_2A_3$.
Then $\gamma_1$ and $\sigma_1$ 
bound  the domain $G_1$ on the octahedron.

Similarly on the facet $A_5A_4A_1$ the geodesic comes from the point 
$X_1$ to the point $Y_1$ on the edge $A_1A_4$ and 
from the point $X_4$ to the point $Y_4$ also on $A_1A_4$
and $\dist(A_4, Y_4) < \dist(A_4,Y_1)$. 
As before one can show that there is no other points of the geodesic 
between $Y_1$ and $Y_4$. 
Denote by $\gamma_2$ the part of the geodesic  that starts at $Y_1$ on $A_1A_4$
in opposite direction to $X_1$, goes along the surface and comes to $Y_4$.
Denote by $\sigma_2$ the segment $Y_1Y_4$ on the edge $A_1A_4$. 
The segments $\gamma_2$ and $\sigma_2$ bound  the domain $G_2$ on the octahedron 
(see Figure~\ref{around_A_5_oct_2}).

\textit{Case 2.} Consider now the case, when there one of the segments
$X_1X_4$ or $X_2X_3$ on the facet 
$A_5A_4A_1$ or $A_5A_3A_2$ respectively and there is no $X_3X_4$ on $A_5A_3A_4$.  
In this case $\gamma$ crosses three edges coming out of $A_5$ sequentially. 
Assume there is $X_1X_4$. 

Similarly to the Case 1,
on the facet $A_5A_2A_3$ the geodesic $\gamma$
goes from the point $X_2$ to a point $Y_2$ at the edge $A_2A_3$
and $\gamma$ goes from the point $Y_3$ to a point $Y_3$ on $A_2A_3$
and $\dist(A_2, Y_2) < \dist(A_2,Y_3)$. 
As before let $\gamma_1$ be the part of the geodesic that starts at $Y_2$
on the facet $A_6A_2A_3$, goes along the surface and comes to the point $Y_3$. 
And let $\sigma_1$ be the segment $Y_2Y_3$ of the edge $A_2A_3$.
Then $\gamma_1$ and $\sigma_1$ bound the domain $G_1$ on the octahedron.

On a facet $A_5A_4A_3$ the geodesic has the segment $X_4Y_4$. 
If $Y_4$ lies on the edge $A_5A_3$, then $\dist(A_5, Y_4) > \dist(A_5, X_3)$.
But then the segment of $\gamma$ coming out of $X_3$ intersects $X_4Y_4$,
since $X_4$ is the closest to $A_5$ point of $\gamma$ on $A_5A_4$.
Hence $Y_4$ lies on $A_4A_3$. 
The geodesic goes from the point $X_3$ to a point $Y_5$ on the edge $A_4A_3$  
and $\dist(A_4, Y_4) < \dist(A_4,Y_5)$.
We denote by $\gamma_2$ the part of the geodesic  that starts at $Y_4$ on $A_4A_3$
in opposite direction to $X_4$, goes along the surface and comes to $Y_5$.
Denote by $\sigma_2$ the segment $Y_4Y_5$ on the edge $A_4A_3$. 
The segments $\gamma_2$ and $\sigma_2$ bound the domain $G_2$ on the octahedron
(see Figure~\ref{around_A_5_oct_3}). 

 \begin{minipage}{0.4\linewidth}
    \begin{figure}[H]
        \begin{tikzpicture}[line cap=round,line join=round,x=0.6cm,y=0.6cm]

\clip(-2,-2) rectangle (15,13);

\draw [line width=0.5pt] (3,8)-- (6,5);
\draw [line width=0.5pt] (6,5)-- (9,8);
\draw [line width=0.5pt] (6,5)-- (9,2);
\draw [line width=0.5pt] (6,5)-- (3,2);
\draw [line width=0.5pt] (3,8)-- (9,8);
\draw [line width=0.5pt] (9,8)-- (9,2);
\draw [line width=0.5pt] (9,2)-- (3,2);
\draw [line width=0.5pt] (3,2)-- (3,8);
\draw [line width=0.5pt] (7.465,6.465)-- (7.625,3.375);
\draw [line width=0.5pt] (4.665,3.665)-- (4.585,6.415);
\draw [line width=0.5pt] (4.585,6.415)-- (4.54,8.99);
\draw [line width=0.5pt] (7.465,6.465)-- (7.4,9.13);
\draw [line width=0.5pt] (4.665,3.665)-- (4.7,0.41);
\draw [line width=0.5pt] (7.625,3.375)-- (7.72,0.45);
\draw [shift={(5.972588331963846,9.007124075595726)},line width=0.5pt]  plot[domain=0.08587133913966076:3.153545326748522,variable=\t]({1*1.432690672421637*cos(\t r)+0*1.432690672421637*sin(\t r)},{0*1.432690672421637*cos(\t r)+1*1.432690672421637*sin(\t r)});
\draw [shift={(6.209923221140969,0.43579680385679886)},line width=0.5pt]  plot[domain=-3.1245094708321606:0.009405334571825368,variable=\t]({1*1.5101435722572691*cos(\t r)+0*1.5101435722572691*sin(\t r)},{0*1.5101435722572691*cos(\t r)+1*1.5101435722572691*sin(\t r)});
\draw (7.54,0.05) node[anchor=north west] {$ \gamma_1$};
\draw (6.54,11.01) node[anchor=north west] {$ \gamma_2$};
\draw (5.65,0.63) node[anchor=north west] {$G_1$};
\draw (5.5,9.63) node[anchor=north west] {$G_2$};

\begin{scriptsize}
\draw [fill=black] (3,8) circle (1pt);
\draw[color=black] (2.65,8.4) node {$A_4$};
\draw [fill=black] (3,2) circle (1pt);
\draw[color=black] (2.6,1.9) node {$A_3$};
\draw [fill=black] (9,2) circle (1pt);
\draw[color=black] (9.25,1.9) node {$A_2$};
\draw [fill=black] (9,8) circle (1pt);
\draw[color=black] (9.15,8.3) node {$A_1$};
\draw [fill=black] (6,5) circle (1pt);
\draw[color=black] (6.4,5.0) node {$A_5$};

\draw [fill=black] (7.465,6.465) circle (1pt);
\draw[color=black] (7.9,6.4) node {$X_1$};
\draw [fill=black] (7.625,3.375) circle (1pt);
\draw[color=black] (7.95,3.7) node {$X_2$};

\draw [fill=black] (4.585,6.415) circle (1pt);
\draw[color=black] (4.15,6.45) node {$X_4$};
\draw [fill=black] (4.665,3.665) circle (1pt);
\draw[color=black] (4.15,3.85) node {$X_3$};

\draw [fill=black] (4.557300970873785,8) circle (1pt);
\draw[color=black] (4.22,8.3) node {$Y_4$};
\draw [fill=black] (7.427560975609755,8) circle (1pt);
\draw[color=black] (7.74,8.3) node {$Y_1$};

\draw [fill=black] (4.68,2)  circle (1pt);
\draw[color=black] (4.35,1.7) node {$Y_3$};
\draw [fill=black] (7.68,2) circle (1pt);
\draw[color=black] (7.97,1.7) node {$Y_2$};
\end{scriptsize}
\end{tikzpicture}
        \caption{\;}
        \label{around_A_5_oct_2}
    \end{figure}
 \end{minipage}
      \hspace{0.05\linewidth}
 \begin{minipage}{0.4\linewidth}
     \begin{figure}[H]
        \begin{tikzpicture}[line cap=round,line join=round,x=0.6cm,y=0.6cm]

\clip(-2,-2) rectangle (15,13);

\draw [line width=0.5pt] (3,8)-- (6,5);
\draw [line width=0.5pt] (6,5)-- (9,8);
\draw [line width=0.5pt] (6,5)-- (9,2);
\draw [line width=0.5pt] (6,5)-- (3,2);
\draw [line width=0.5pt] (3,8)-- (9,8);
\draw [line width=0.5pt] (9,8)-- (9,2);
\draw [line width=0.5pt] (9,2)-- (3,2);
\draw [line width=0.5pt] (3,2)-- (3,8);
\draw (6.15,0) node[anchor=north west] {$ \gamma_1$};
\draw (-0.1,4.0) node[anchor=north west] {$ \gamma_2$};
\draw (5.25,1.5) node[anchor=north west] {$G_1$};
\draw (1.3,4.6) node[anchor=north west] {$G_2$};

\draw [shift={(5.4,4.23)},line width=0.5pt]  plot[domain=-0.36687262900805084:1.9376689558029478,variable=\t]({1*2.383621194737117*cos(\t r)+0*2.383621194737117*sin(\t r)},{0*2.383621194737117*cos(\t r)+1*2.383621194737117*sin(\t r)});
\draw [shift={(3.1838461336743773,2.0930136694341765)},line width=0.5pt]  plot[domain=-0.0511699343457952:1.6720204509481746,variable=\t]({1*1.818534137395039*cos(\t r)+0*1.818534137395039*sin(\t r)},{0*1.818534137395039*cos(\t r)+1*1.818534137395039*sin(\t r)});

\draw [line width=0.5pt] (7.625,3.375)-- (7,2);
\draw [line width=0.5pt] (4.545,6.455)-- (3,5.45);
\draw [line width=0.5pt] (3,5.45)-- (0.9866548870665336,4.2611671462630625);
\draw [line width=0.5pt] (3,3.903034652322576)-- (1.4127707165118844,3.2423278344446373);
\draw [line width=0.5pt] (5,2)-- (5,0);
\draw [line width=0.5pt] (7,2)-- (6.2788958679371065,-0.16249189925207103);
\draw [shift={(1.350593590695177,3.8143573919583442)},line width=0.5pt]  plot[domain=2.25433276809074:4.820659571245146,variable=\t]({1*0.5762727969813778*cos(\t r)+0*0.5762727969813778*sin(\t r)},{0*0.5762727969813778*cos(\t r)+1*0.5762727969813778*sin(\t r)});
\draw [shift={(5.664108228304564,0.10906699469489697)},line width=0.5pt]  plot[domain=3.304370321124309:5.867245272182152,variable=\t]({1*0.6730047163531648*cos(\t r)+0*0.6730047163531648*sin(\t r)},{0*0.6730047163531648*cos(\t r)+1*0.6730047163531648*sin(\t r)});

\begin{scriptsize}
\draw [fill=black] (3,8) circle (1pt);
\draw[color=black] (2.65,8.4) node {$A_4$};
\draw [fill=black] (3,2) circle (1pt);
\draw[color=black] (2.6,1.9) node {$A_3$};
\draw [fill=black] (9,2) circle (1pt);
\draw[color=black] (9.3,1.9) node {$A_2$};
\draw [fill=black] (9,8) circle (1pt);
\draw[color=black] (9.15,8.3) node {$A_1$};
\draw [fill=black] (6,5) circle (1pt);
\draw[color=black] (6.4,5.0) node {$A_5$};

\draw [fill=black] (7,6) circle (1pt);
\draw[color=black] (7.6,6) node {$X_1$};
\draw [fill=black] (7.625,3.375) circle (1pt);
\draw[color=black] (8,3.5) node {$X_2$};
\draw [fill=black] (4.423525450646084,3.4235254506460837) circle (1pt);
\draw[color=black] (4.9,3.4) node {$X_3$};
\draw [fill=black] (4.545,6.455) circle (1pt);
\draw[color=black] (4.6,6.9) node {$X_4$};

\draw [fill=black] (3,5.45) circle (1pt);
\draw[color=black] (2.6,5.8) node {$Y_4$};
\draw [fill=black] (3,3.9) circle (1pt);
\draw[color=black] (2.75,3.5) node {$Y_5$};

\draw [fill=black] (5,2) circle (1pt);
\draw[color=black] (4.68,1.7) node {$Y_3$};
\draw [fill=black] (7,2) circle (1pt);
\draw[color=black] (7.25,1.7) node {$Y_2$};

\end{scriptsize}
\end{tikzpicture}
        \caption{\;}
        \label{around_A_5_oct_3}
    \end{figure}
 \end{minipage}

In both cases we obtain two domains $G_1$ and $G_2$ bounded by the segment of $\gamma_i$
and the segment of the octahedron's edge $\sigma_i$.
The domains $G_i$, $i=1,2$ are locally convex domains homeomorphic to a disc.
Moreover, on the octahedron the domains $G_1$ and $G_2$ do not intersect.
Otherwise, $\gamma_1$ and $\gamma_2$ intersect. 
If $G_i$ does not contain any vertex of the octahedron  
then $G_i$  forms a lune on a unite sphere. 
Hence the length of $\sigma_i$ and and the length of $\gamma_i$ are
equal to $\pi$. Since $\sigma_i$ is part of the edge, then from 
(\ref{a_triangle}) follows the length of $\sigma_i$ is less than $\pi/2$.
We get a contradiction.

Hence $G_i$ should have at least one octahedron's vertex, $i=1,2$. 
Since $G_i$ is a part of the domain $D_1$, 
and $D_i$ has at most three octahedron's vertices, 
then $G_i$ contains exactly one vertex
$A_{s_i}$, $s_i \in \{1,2,3,4,6\}$ of the octahedron and
has no edges coming out of  $A_{s_i}$. 
Since there is no points of the geodesic on $\sigma_i$, then 
$A_{s_1}$ and $A_{s_2}$ are different vertices of the octahedron,
otherwise $\gamma_1$ and $\gamma_2$ intersect. 

Consider the point $A_{s_1}$ inside $G_1$. 
The behaviour of the geodesic around $A_{s_1}$  is similar to the behaviour around $A_5$.
Hence, the domain $G_1$ is divided  
into  three parts $G'_1$, $G'_2$ and 
$G_1\backslash (G'_1 \cup G'_2)$ between them
with two segments  $\sigma'_1$ and $\sigma'_2$ of the edges.
The domain $G_1\backslash (G'_1 \cup G'_2)$ contains the vertex $A_{s_1}$ and 
$G'_1$, $G'_2$ do not intersect and do not have any vertex of the octahedron.
Since there is no points of geodesic on $\sigma_1$, then $\sigma_1$ belong to the boundary 
of one of the domains $G'_1$ or $G'_2$, for example to $G'_2$ 
(see Figure~\ref{oct_sec_division}).
Then the perimeter of  $G'_1$ is a subarc $\gamma'_1$ of $\gamma_1$ and the subarc of the
octahedon's edge $\sigma'_1$.
Since $G'_1$ does not have any vertex of octahedron, then 
$G'_1$ form a lune on a unite sphere. 
Hence the length of $\sigma'_1$ equals $\pi$,  that leads to a contradiction. 

\begin{figure}[h]
\centering
\begin{tikzpicture}[line cap=round,line join=round,x=1.2cm,y=1.2cm]
\clip(4.,4.5) rectangle (19,8);

\draw [line width=1pt] (7,7)-- (16,7);
\draw [line width=1pt] (7,5)-- (16,5);
\draw [shift={(7,6)},line width=1pt]  plot[domain=1.5707963267948966:4.71238898038469,variable=\t]({1*1*cos(\t r)+0*1*sin(\t r)},{0*1*cos(\t r)+1*1*sin(\t r)});
\draw [shift={(16,6)},line width=1pt]  plot[domain=-1.5707963267948966:1.5707963267948966,variable=\t]({1*1*cos(\t r)+0*1*sin(\t r)},{0*1*cos(\t r)+1*1*sin(\t r)});

\draw [shift={(16,6)},line width=0.8pt,dotted]  plot[domain=2.7331294361508927:3.5777598548344827,variable=\t]({1*3.725724320420394*cos(\t r)+0*3.725724320420394*sin(\t r)},{0*3.725724320420394*cos(\t r)+1*3.725724320420394*sin(\t r)});
\draw [shift={(10,6)},line width=0.8pt,dotted]  plot[domain=-0.4100922358073227:0.3844281212275174,variable=\t]({1*3.981895251527355*cos(\t r)+0*3.981895251527355*sin(\t r)},{0*3.981895251527355*cos(\t r)+1*3.981895251527355*sin(\t r)});

\draw [shift={(11.04707587228614,5.800711415525862)},line width=0.8pt,dotted]  plot[domain=2.586662298546335:3.5469872050411624,variable=\t]({1*3.43787231520742*cos(\t r)+0*3.43787231520742*sin(\t r)},{0*3.43787231520742*cos(\t r)+1*3.43787231520742*sin(\t r)});
\draw [shift={(6.5,5.800711415525862)},line width=0.8pt,dotted]  plot[domain=-0.3906070436976865:0.516403890772842,variable=\t]({1*3.6461445651289233*cos(\t r)+0*3.6461445651289233*sin(\t r)},{0*3.6461445651289233*cos(\t r)+1*3.6461445651289233*sin(\t r)});
\draw [shift={(11.04707587228614,5.800711415525862)},line width=1pt]  plot[domain=2.7852530064860614:3.376660538089388,variable=\t]({1*3.437872315207418*cos(\t r)+0*3.437872315207418*sin(\t r)},{0*3.437872315207418*cos(\t r)+1*3.437872315207418*sin(\t r)});

\draw [shift={(6.5,5.800711415525862)},line width=1pt]  plot[domain=-0.221409522908254:0.3351593392491988,variable=\t]({1*3.646144565128921*cos(\t r)+0*3.646144565128921*sin(\t r)},{0*3.646144565128921*cos(\t r)+1*3.646144565128921*sin(\t r)});

\draw [shift={(10,6)},line width=1pt]  plot[domain=-0.2538544034885071:0.25385440348850735,variable=\t]({1*3.981895251527356*cos(\t r)+0*3.981895251527356*sin(\t r)},{0*3.981895251527356*cos(\t r)+1*3.981895251527356*sin(\t r)});
\draw [shift={(16,6)},line width=1pt]  plot[domain=2.869856607016582:3.4133287001630044,variable=\t]({1*3.7257243204203947*cos(\t r)+0*3.7257243204203947*sin(\t r)},{0*3.7257243204203947*cos(\t r)+1*3.7257243204203947*sin(\t r)});

\draw (11.8,6.3) node[anchor=north west] {$\sigma_1$};
\draw (14,6.3) node[anchor=north west] {$\sigma_2$};
\draw (16.3,6.3) node[anchor=north west] {$\gamma_2$};
\draw (15.0,6.5) node[anchor=north west] {$G_2$};

\draw (10.7,6.5) node[anchor=north west] {$ G'_2$};
\draw (6.2,6.5) node[anchor=north west] {$ G'_1$};
\draw (7.7,6.9) node[anchor=north west] 
{$G_1\backslash (G'_1 \cup G'_2)$};

\draw (7,6.3) node[anchor=north west] {$\sigma'_1$};
\draw (10.2,6.3) node[anchor=north west] {$\sigma'_2$};
\draw (5.5,6.3) node[anchor=north west] {$\gamma'_1$};

\begin{scriptsize}
\draw [fill=black] (13,6) circle (1.5pt);
\draw[color=black] (13.25,6.1) node {$A_5$};
\draw [fill=black] (9,6) circle (1.5pt);
\draw[color=black] (9.3,6.1) node {$A_{s_1}$};

\draw [fill=black] (7.825170698818176,7) circle (1.5pt);
\draw [fill=black] (9.947135853529666,7) circle (1.5pt);
\draw [fill=black] (7.703750090266653,5) circle (1.5pt);
\draw [fill=black] (10.061008479712822,5) circle (1.5pt);

\draw [fill=black] (12.410985969409982,7) circle (1.5pt);
\draw[color=black] (12.2,7.2) node {$Y_2$};
\draw [fill=black] (13.854282007603505,7) circle (1.5pt);
\draw[color=black] (14.0,7.2) node {$Y_1$};

\draw [fill=black] (12.410985969409982,5) circle (1.5pt);
\draw[color=black] (12.2,4.8) node {$Y_3$};
\draw [fill=black] (13.854282007603505,5) circle (1.5pt);
\draw[color=black] (14.0,4.8) node {$Y_4$};
\end{scriptsize}

\end{tikzpicture}
\caption{\;}
\label{oct_sec_division}
\end{figure}

\end{proof}

\begin{lemma}\label{oct_lemma}
A domain $D_i$, $i=1,2$ contains three vertices connected with
two adjacent edges on the octahedron. 
\end{lemma}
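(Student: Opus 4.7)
The plan is to reduce the statement to a short combinatorial analysis on the octahedron's $1$-skeleton, which is the complete tripartite graph $K_{2,2,2}$ whose non-edges are precisely the three pairs of opposite vertices $(A_1,A_3)$, $(A_2,A_4)$, $(A_5,A_6)$.

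Let $V=\{A_1,\dots,A_6\}$. First I would apply Lemma~\ref{edge_inside_oct} at the level of the vertex graph: if a vertex of the octahedron lies in $D_i$, then an entire incident edge lies in $D_i$, so the other endpoint of that edge is also in $D_i$. Hence the subgraph of $K_{2,2,2}$ induced on $D_i\cap V$ has minimum degree at least one; in particular each $D_i$ contains at least two vertices, which recovers the observation already used in the proof of Lemma~\ref{edge_inside_oct}. Combining this with the bound $|D_1\cap V|\le 3$ established there (after a possible relabeling) together with $|V|=6$, the cardinality pair $\bigl(|D_1\cap V|,|D_2\cap V|\bigr)$ must be either $(3,3)$ or $(2,4)$.

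Next I would handle the two cases. In the $(3,3)$ case, the three vertices of $D_i\cap V$ have total induced degree at least three, so the induced subgraph contains at least two edges; two distinct edges on three vertices must share an endpoint, and therefore form a path of length two, i.e.\ three vertices joined by two adjacent octahedral edges, as required. In the $(2,4)$ case I focus on the four-vertex domain $D_j$: removing two vertices from $K_{2,2,2}$ reduces any remaining vertex's degree by at most two (since a vertex has exactly one non-neighbor, namely its opposite), so every vertex of $D_j\cap V$ still has degree at least two in the induced subgraph. Picking any such vertex together with two of its induced neighbors again yields the desired triple.

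The main obstacle is bookkeeping rather than a geometric difficulty: once Lemma~\ref{edge_inside_oct} is in hand, the density of $K_{2,2,2}$ immediately forces a path of length two in at least one of the domains. The only subtle point is to keep careful track of the two possible cardinality splits and to note that in the $(2,4)$ case the existential claim of the lemma must be realized by the larger of the two domains, not the smaller one.
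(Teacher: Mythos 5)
Your combinatorial reduction proves a strictly weaker statement than the lemma. As the lemma is stated, and as it is used in the proof of Theorem~\ref{geod_oct}, \emph{each} of the two domains $D_1$, $D_2$ must contain three vertices joined by two adjacent edges; this is exactly what restricts the possible geodesics to the two types enumerated there. Your argument explicitly leaves the $(2,4)$ split alive and realizes the claim only in the larger domain, so the case in which one domain contains exactly two vertices and a single edge is never excluded. Excluding precisely this case is the entire content of the paper's proof: assuming $D_1$ contains only the edge $A_4A_5$, one applies the order-two rotation $r$ about the line $A_1A_3$, which carries $\gamma$ to a geodesic $\sigma$ enclosing $A_2A_6$; if $\gamma$ and $\sigma$ intersect, the symmetry forces four intersection points, producing two spherical lunes and making the length of $\gamma$ exceed $2\pi$, contradicting Lemma~\ref{length}, while if they are disjoint the Gauss--Bonnet theorem (zero integral curvature over the annulus they bound, on a surface of curvature $1$ away from vertices) forces them to coincide, again a contradiction. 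No counting in the graph $K_{2,2,2}$ can substitute for this metric argument, because the $(2,4)$ configuration is combinatorially perfectly consistent.

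A secondary gap: in the four-vertex domain you silently switch from edges \emph{contained} in the domain to adjacency in the induced subgraph of $K_{2,2,2}$. Lemma~\ref{edge_inside_oct} guarantees only one contained edge per vertex, and two vertices of $D_j$ being adjacent on the octahedron does not imply that the connecting edge lies in $D_j$ (the geodesic may cross that edge twice, cutting out its middle). So even the existential conclusion you draw in the $(2,4)$ case is unjustified: the contained edges could a priori form a matching of two disjoint edges on the four vertices, with no two of them adjacent. Your $(3,3)$ case is fine, since minimum degree one among contained edges on three vertices already forces two contained edges sharing a vertex; but the lemma is only established once the $(2,4)$ case is geometrically ruled out, which your proposal does not do.
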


\begin{proof}
By Lemma~\ref{edge_inside_oct} if the domain $D_i$ contains only two vertices,
then $D_i$ has an edge connecting these vertices.

Suppose the domain $D_1$ contains just one edge of the octahedron, for example  $A_4A_5$. 
Denote vertices of $\gamma$ as $X_1$, $X_2$, $X_3$, $X_4$, $X_5$ and $X_6$ on the edges 
$A_5A_1$, $A_5A_2$, $A_5A_3$, $A_3A_4$, $A_4A_6$ and $A_1A_4$ respectively. 

Consider an isometry $r$ of the octahedron 
that is a rotation on an angle $\pi$ around a line $A_1A_3$.
The geodesic $\gamma$ maps into a geodesic $\sigma$
that enclose a domain with the edge $r(A_4A_5)=A_2A_6$. 
Denote by $Y_k$ the vertices of $\sigma$ such that 
$Y_k=r(X_k)$ (see Figure~\ref{intersection}).

Since $r(A_6A_4)=A_2A_5$, $r(A_1A_6)=A_1A_5$ and $r(A_3A_6)=A_3A_5$ then 
$\dist(A_5,X_2) = \dist(A_6,Y_2)$ and $\dist(A_4,X_5) = \dist(A_2,Y_5)$.
Hence if $\gamma$ and $\sigma$ intersect, then they intersect in four points 
$Z_1$, $Z_2$, $Z_3$ and $Z_4$ on the facets $A_1A_2A_5$, $A_2A_3A_5$, $A_3A_4A_6$
and $A_1A_4A_6$ respectively.

\begin{figure}[h]
\centering
\begin{tikzpicture}[line cap=round,line join=round,x=1cm,y=1cm]

\clip(3,2.5) rectangle (20,12);

\draw [shift={(19.52727272727274,4.404545454545459)},line width=0.6pt,dotted]  plot[domain=2.5608828750136485:2.7807322313850245,variable=\t]({1*10.199190739246786*cos(\t r)+0*10.199190739246786*sin(\t r)},{0*10.199190739246786*cos(\t r)+1*10.199190739246786*sin(\t r)});
\draw [shift={(18.272727272727284,1.3681818181818222)},line width=0.6pt]  plot[domain=2.2709485956750033:2.7180030412862193,variable=\t]({1*11.287198372822111*cos(\t r)+0*11.287198372822111*sin(\t r)},{0*11.287198372822111*cos(\t r)+1*11.287198372822111*sin(\t r)});
\draw [shift={(12,-2)},line width=0.6pt,dotted]  plot[domain=1.373400766945016:1.7681918866447774,variable=\t]({1*10.19803902718557*cos(\t r)+0*10.19803902718557*sin(\t r)},{0*10.19803902718557*cos(\t r)+1*10.19803902718557*sin(\t r)});
\draw [shift={(5.490909090909096,-1.268181818181814)},line width=0.6pt]  plot[domain=0.8280723168235737:1.116062582183234,variable=\t]({1*12.58188468847372*cos(\t r)+0*12.58188468847372*sin(\t r)},{0*12.58188468847372*cos(\t r)+1*12.58188468847372*sin(\t r)});
\draw [shift={(7,13)},line width=0.6pt]  plot[domain=5.332638466367511:5.662935821196765,variable=\t]({1*8.602325267042627*cos(\t r)+0*8.602325267042627*sin(\t r)},{0*8.602325267042627*cos(\t r)+1*8.602325267042627*sin(\t r)});
\draw [shift={(13.6909090909091,2.331818181818186)},line width=0.6pt,dotted]  plot[domain=2.1479885406683588:2.569045057772295,variable=\t]({1*6.763955576533774*cos(\t r)+0*6.763955576533774*sin(\t r)},{0*6.763955576533774*cos(\t r)+1*6.763955576533774*sin(\t r)});
\draw [shift={(10,15)},line width=0.6pt]  plot[domain=4.493720034510748:4.931057926258632,variable=\t]({1*9.219544457292889*cos(\t r)+0*9.219544457292889*sin(\t r)},{0*9.219544457292889*cos(\t r)+1*9.219544457292889*sin(\t r)});
\draw [shift={(3.3818181818181863,11.004545454545458)},line width=0.6pt]  plot[domain=5.473061313919317:6.007431755606862,variable=\t]({1*11.050314119913528*cos(\t r)+0*11.050314119913528*sin(\t r)},{0*11.050314119913528*cos(\t r)+1*11.050314119913528*sin(\t r)});
\draw [shift={(2.236363636363641,7.622727272727277)},line width=0.6pt]  plot[domain=5.797788614245702:6.118489630866475,variable=\t]({1*9.90812443159986*cos(\t r)+0*9.90812443159986*sin(\t r)},{0*9.90812443159986*cos(\t r)+1*9.90812443159986*sin(\t r)});
\draw [shift={(16,11)},line width=0.6pt]  plot[domain=3.7001919689333556:4.153789665041127,variable=\t]({1*9.433981132056603*cos(\t r)+0*9.433981132056603*sin(\t r)},{0*9.433981132056603*cos(\t r)+1*9.433981132056603*sin(\t r)});
\draw [shift={(21.490909090909117,7.6954545454545515)},line width=0.6pt,dotted]  plot[domain=3.115095691849769:3.5624269997287135,variable=\t]({1*11.494944091617933*cos(\t r)+0*11.494944091617933*sin(\t r)},{0*11.494944091617933*cos(\t r)+1*11.494944091617933*sin(\t r)});
\draw [shift={(1.1834710743801737,5.379752066115707)},line width=0.6pt]  plot[domain=0.05727987679613458:0.4399013705715978,variable=\t]({1*10.83429764674472*cos(\t r)+0*10.83429764674472*sin(\t r)},{0*10.83429764674472*cos(\t r)+1*10.83429764674472*sin(\t r)});

\draw [shift={(12.248975583147116,11.941262602658366)},line width=0.6pt]  plot[domain=4.0474483268920025:4.655618833966844,variable=\t]({1*5.324991533454713*cos(\t r)+0*5.324991533454713*sin(\t r)},{0*5.324991533454713*cos(\t r)+1*5.324991533454713*sin(\t r)});
\draw [shift={(10.483452686392834,2.892524572283593)},line width=0.6pt]  plot[domain=1.0492924055973782:1.1967477363299066,variable=\t]({1*4.0111713628110675*cos(\t r)+0*4.0111713628110675*sin(\t r)},{0*4.0111713628110675*cos(\t r)+1*4.0111713628110675*sin(\t r)});
\draw [shift={(4.788883035786868,3.1012406536401897)},line width=0.6pt,dotted]  plot[domain=0.18581436237430643:0.7095463041060693,variable=\t]({1*7.671565026997815*cos(\t r)+0*7.671565026997815*sin(\t r)},{0*7.671565026997815*cos(\t r)+1*7.671565026997815*sin(\t r)});
\draw [shift={(7.431404958677697,3.2805785123967)},line width=0.6pt,dotted]  plot[domain=0.9875080455032237:1.0832204234087117,variable=\t]({1*5.778212874070339*cos(\t r)+0*5.778212874070339*sin(\t r)},{0*5.778212874070339*cos(\t r)+1*5.778212874070339*sin(\t r)});
\draw [shift={(10.594619344949914,6.092904817129524)},line width=0.6pt,dotted]  plot[domain=1.7710656178240451:2.348104884050531,variable=\t]({1*2.326563106634456*cos(\t r)+0*2.326563106634456*sin(\t r)},{0*2.326563106634456*cos(\t r)+1*2.326563106634456*sin(\t r)});
\draw [shift={(15.125732968663854,8.852101481660558)},line width=0.6pt]  plot[domain=3.0586261204039653:3.7920749603505595,variable=\t]({1*4.9383706050854315*cos(\t r)+0*4.9383706050854315*sin(\t r)},{0*4.9383706050854315*cos(\t r)+1*4.9383706050854315*sin(\t r)});

\draw [shift={(14,11)},line width=0.6pt]  plot[domain=4.212382479256491:4.349490462033025,variable=\t]({1*5.859678784599839*cos(\t r)+0*5.859678784599839*sin(\t r)},{0*5.859678784599839*cos(\t r)+1*5.859678784599839*sin(\t r)});
\draw [shift={(10.880355533126247,9.201714544488034)},line width=0.6pt]  plot[domain=4.987019331362803:5.435148201406076,variable=\t]({1*3.827382980966542*cos(\t r)+0*3.827382980966542*sin(\t r)},{0*3.827382980966542*cos(\t r)+1*3.827382980966542*sin(\t r)});
\draw [shift={(10.3774598419359,10.333229849666317)},line width=0.6pt,dotted]  plot[domain=4.4169963802552275:4.573200534907478,variable=\t]({1*2.7520972265241483*cos(\t r)+0*2.7520972265241483*sin(\t r)},{0*2.7520972265241483*cos(\t r)+1*2.7520972265241483*sin(\t r)});
\draw [shift={(8.289389334049218,-2.3345048989460677)},line width=0.6pt,dotted]  plot[domain=1.038482653531245:1.4008502021011957,variable=\t]({1*10.071714569338848*cos(\t r)+0*10.071714569338848*sin(\t r)},{0*10.071714569338848*cos(\t r)+1*10.071714569338848*sin(\t r)});
\draw [shift={(16.952333708376326,5.649353057649744)},line width=0.6pt,dotted]  plot[domain=2.6501282573268643:2.870384621088868,variable=\t]({1*7.653876519013346*cos(\t r)+0*7.653876519013346*sin(\t r)},{0*7.653876519013346*cos(\t r)+1*7.653876519013346*sin(\t r)});
\draw [shift={(6.6423235133374625,5.940068350158391)},line width=0.6pt]  plot[domain=-0.24498077260447548:0.07357722505858331,variable=\t]({1*5.861066998156201*cos(\t r)+0*5.861066998156201*sin(\t r)},{0*5.861066998156201*cos(\t r)+1*5.861066998156201*sin(\t r)});

\begin{scriptsize}

\draw [fill=black] (12,6) circle (1pt);
\draw[color=black] (11.8,6.2) node {$A_1$};
\draw [fill=black] (8,6) circle (1pt);
\draw[color=black] (7.7944047008218424,5.92717782798725) node {$A_2$};
\draw [fill=black] (10,8) circle (1pt);
\draw [fill=black] (14,8) circle (1pt);
\draw[color=black] (14.217754211025825,7.950190040275692) node {$A_4$};
\draw [fill=black] (11,10) circle (1pt);
\draw[color=black] (11.120373931194367,10.236079545686362) node {$A_5$};
\draw [fill=black] (11,3) circle (1pt);
\draw[color=black] (11.0,2.7) node {$A_6$};

\draw [fill=black] (11.946556700218574,6.619910648365977) circle (1pt);
\draw[color=black] (12.15,6.8) node {$X_1$};
\draw [fill=black] (8.963389612681583,7.750742388884849) circle (1pt);
\draw[color=black] (8.75,7.9) node {$X_2$};
\draw [fill=black] (10.131788592434809,8.372967005684668) circle (1pt);
\draw[color=black] (10.4,8.5) node {$X_3$};
\draw [fill=black] (10.613879893007766,8.103398984945208) circle (1pt);
\draw[color=black] (10.95,7.95) node {$X_4$};
\draw [fill=black] (12.328390705775924,4.518538807746152) circle (1pt);
\draw[color=black] (12.5,4.4) node {$X_5$};
\draw [fill=black] (12.481756916883775,6.370494656145386) circle (1pt);
\draw[color=black] (12.7,6.25) node {$X_6$};

\draw [fill=black] (11.91830795920883,5.517760792853574) circle (1pt);
\draw[color=black] (11.7,5.3) node {$Y_1$};
\draw [fill=black] (13.401068903147277,6.3436381252900444) circle (1pt);
\draw[color=black] (13.6,6.25) node {$Y_2$};
\draw [fill=black] (9.996266130032357,7.612250751139701) circle (1pt);
\draw[color=black] (10.2,7.4) node {$Y_3$};
\draw [fill=black] (9.57628176888508,7.700331667148717) circle (1pt);
\draw[color=black] (9.65,7.5) node {$Y_4$};
\draw [fill=black] (10.204349119050937,9.26135108513134) circle (1pt);
\draw[color=black] (10.05,9.45) node {$Y_5$};
\draw [fill=black] (11.190686911615897,5.857666343952076) circle (1pt);
\draw[color=black] (11.2,5.6) node {$Y_6$};

\draw [fill=black] (10.598937464284033,6.878367281040195) circle (1pt);
\draw[color=black] (10.75,7.1) node {$Z_1$};
\draw [fill=black] (9.758982641303804,8.264219439350827) circle (1pt);
\draw[color=black] (9.65,8.45) node {$Z_2$};
\draw [fill=black] (11.205646404014786,7.3057689077519665) circle (1pt);
\draw[color=black] (11.4,7.5) node {$Z_3$};
\draw [fill=black] (12.499760000479473,5.733806002714081) circle (1pt);
\draw[color=black] (12.7,5.6) node {$Z_4$};
\end{scriptsize}
\end{tikzpicture}
\caption{\;}
\label{intersection}
\end{figure}

The segments $Z_1Y_5Z_2$ and $Z_1X_2Z_2$ form a lune 
and they do not enclose any vertex inside.
Since the facets of the octahedron are the domains of the unite sphere,
then the length of $Z_1X_2Z_2$  is equal to $\pi$.
The same holds for the segments $Z_4Y_2Z_3$ and $Z_4X_5Z_3$.  
In this case, the length of the geodesic $\gamma$ is greater than $2\pi$ 
that contradicts Lemma~\ref{length}.

Thus $\gamma$ and $\sigma$ do not intersect. 
From the Gauss-Bonnet theorem follows that the integral of the curvature 
over the area of a domain enclosed by $\gamma$ and $\sigma$ equals zero.
Since the octahedron has positive curvature in each point, it follows 
the geodesics  $\gamma$ and $\sigma$  coincide after the rotation $r$. 
It leads to a contradiction 
since $\gamma$ can not pass through the vertex of the octahedron.

Therefore, the domain $D_1$ contains exactly three vertices of the octahedron. 
By Lemma~\ref{edge_inside_oct} there is at least one edge inside $D_1$ 
coming out of each vertex. 
Hence $D_1$ contains at least two edges of the octahedron connecting three vertices. 
The same is true for the domain $D_2$. 
\end{proof}

\begin{theorem}\label{geod_oct}
There are only two different simple closed geodesics on  regular spherical octahedra.
\end{theorem}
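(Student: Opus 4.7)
The plan is to use Lemma \ref{oct_lemma} as the structural input and reduce the count to a classification of $3$--$3$ vertex partitions of the octahedron. By Lemma \ref{oct_lemma}, any simple closed geodesic $\gamma$ bounds two open domains $D_1, D_2$, each containing exactly three of the six vertices, and within each $D_i$ there are two edges of the octahedron forming a length-two path through its three vertices.

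Modeling the vertex set as $\{\pm e_1, \pm e_2, \pm e_3\}$ with edges joining non-antipodal pairs, the triples of three vertices that span two adjacent edges come in only two kinds: either a face of the octahedron (three mutually adjacent vertices, one from each antipodal pair), or one entire antipodal pair together with a vertex from a different axis (giving a length-two path through that extra vertex). A parity argument on each antipodal pair shows that any valid $3$--$3$ partition must split either $1$ or $3$ antipodal pairs. This yields two types: \emph{Type I}, in which all three antipodal pairs are split and each $D_i$ is a face of the octahedron; and \emph{Type II}, in which exactly one antipodal pair is split and each $D_i$ is a length-two path whose middle vertex is the endpoint of the split pair lying in $D_i$. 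The octahedral symmetry group acts transitively on the partitions within each type, so up to symmetry there are at most two different simple closed geodesics.

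Uniqueness within a fixed partition is then reduced to Lemma \ref{uniqness}. Given the partition, I would trace $\gamma$ around each of the three vertices inside $D_i$ exactly as in the proofs of Lemmas \ref{edge_inside_oct} and \ref{oct_lemma}: the hypothesis $\alpha > \pi/3$ bounds the number of consecutive edges at a vertex that $\gamma$ can cross, and combined with the partition data this pins down the cyclic sequence of faces and edges $\gamma$ traverses. Lemma \ref{uniqness} then forces any two such geodesics to coincide, so each partition carries at most one geodesic, and distinct partitions in the same symmetry class yield geodesics related by an octahedral isometry.

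For existence, I would produce one geodesic per type by a fixed-point argument: the Type I geodesic is the curve fixed by the $3$-fold rotation about the axis through the centers of two opposite faces (crossing $6$ edges in a hexagonal pattern), and the Type II geodesic is fixed by the $2$-fold rotation about the axis through the split antipodal pair (crossing $4$ edges). Local geodesy at each crossing is forced by the symmetry, and their lengths are admissible by Lemma \ref{length}. The main obstacle I anticipate is the rigidity step: showing that the edge-crossing sequence is genuinely forced by the partition. This requires a careful local case analysis at each of the three vertices inside each $D_i$, in the style of the analysis around the vertex $A_5$ already developed in Lemma \ref{edge_inside_oct}.
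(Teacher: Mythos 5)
Your overall strategy is essentially the paper's: Lemma~\ref{oct_lemma} reduces the problem to the two combinatorial types of $3$--$3$ vertex partitions (a facet on each side, or a two-edge path on each side), one symmetric geodesic of each type is exhibited, and uniqueness within a type is obtained by showing that the partition forces the cyclic sequence of crossed edges and then applying Lemma~\ref{uniqness}. Your antipodal-pair parity argument is a clean reformulation of the paper's case distinction, and counting up to octahedral symmetry (four geodesics of the first kind, six of the second, all congruent within a type) is also how the paper concludes.

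Two points in your sketch need repair, though. First, the Type II geodesic crosses eight edges, not four: it must cross every edge not contained in $D_1\cup D_2$, i.e.\ $12-2-2=8$ of them (in the paper it meets the four edges of the square $A_1A_2A_3A_4$ at their midpoints and also crosses $A_2A_6$, $A_3A_5$, $A_4A_6$, $A_1A_5$). Second, invariance of a candidate curve under the $3$-fold (resp.\ $2$-fold) rotation does not by itself force the equal-angle condition at the crossings, because those rotations fix no crossing point; to make the existence step rigorous you need either the involutions of the octahedron that fix the relevant edge midpoints and swap the two incident segments of the curve, or, as the paper does for Type 2, an unfolding onto the sphere, where the hypothesis $\alpha<\pi/2$ guarantees that each connecting arc stays inside the two unfolded facets and crosses the intermediate edge; the appeal to Lemma~\ref{length} contributes nothing to existence. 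Finally, the ``rigidity step'' you flag as the main obstacle is easier than you anticipate and does not require the vertex-by-vertex analysis of Lemma~\ref{edge_inside_oct}: since the geodesic never crosses an edge lying inside $D_1\cup D_2$, on each facet the entry edge determines the exit edge, so the crossing sequence is forced directly by the partition, which is exactly the paper's short tracing argument preceding the use of Lemma~\ref{uniqness}.
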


\begin{proof}
Consider a regular spherical octahedron with  a planar angle $\alpha \in (\pi/3, \pi/2)$.
A simple closed geodesic $\gamma$ divides the surface of the octahedron
into two closed domains $D_1$ and $D_2$. 
From Lemma~\ref{oct_lemma} follows each domain $D_i$ contains three vertices 
of the octahedron connecting with two adjacent edges. 
There are two possibilities for two edges coming out of one vertex: 
1) their endpoints are connected with an edge, and 2) they are not connected with an edge.
In the first case, these edges belong to one facet.

\textbf{Type 1.} Denote  by $X_1, \dots, X_6$ the midpoint of the edges 
$A_1A_2$, $A_2A_5$, $A_5A_3$, $A_3A_4$, $A_4A_6$, $A_6A_1$ respectively.  
Connect $X_1, \dots,  X_6$ consecutively with the segments through the facets 
($X_6$ is connected with $X_1$). 
The broken line $X_1X_2X_3X_4X_5X_6$ forms a simple closed geodesic $\gamma$ 
on the octahedron with a planar angle $\alpha \in (\pi/3, \pi/2)$
(see Figure~\ref{geod_bound_face}). 
The domain $D_1$ contains the facet $A_1A_4A_5$ and $D_2$ has $A_2A_3A_6$. 

If we consider any simple closed broken line $\sigma$ on the octahedron, 
that does not intersect faces $A_1A_4A_5$ and $A_2A_3A_6$,
then this broken line is equivalent to $\gamma$.
In fact, assume $\sigma$ starts at the edge $A_1A_2$. 
Then $\sigma$ can only intersect the edge $A_2A_5$, 
and can not go to $A_1A_5$. 
After it $\sigma$ goes to the edge $A_3A_5$, since it can not go to $A_2A_3$. 
Continuing similarly one can deduce, that $\sigma$ should intersect same edges as $\gamma$
in the same order.
Therefore if there is another geodesic, that encloses the domain with $A_1A_4A_5$
from one side and with $A_2A_3A_6$ from another,
then by Lemma~\ref{uniqness} this geodesic is coincide with $\gamma$.

 In general there are four geodesics that bound a domain with a  facet of an octahedron. 
 They can be constructed similarly if we fix other two non-connected facets
 of the octahedron.
These geodesics can be mapped into each other with the symmetries of the octahedron.

 \begin{figure}[h]
 \begin{tikzpicture}[line cap=round,line join=round,x=0.7cm,y=0.7cm]
 
\clip(5,2) rectangle (15,11);

\draw [shift={(19.52727272727274,4.404545454545459)},line width=0.8pt,dotted]  plot[domain=2.5608828750136485:2.7807322313850245,variable=\t]({1*10.199190739246786*cos(\t r)+0*10.199190739246786*sin(\t r)},{0*10.199190739246786*cos(\t r)+1*10.199190739246786*sin(\t r)});
\draw [shift={(18.272727272727284,1.3681818181818222)},line width=0.8pt]  plot[domain=2.2709485956750033:2.7180030412862193,variable=\t]({1*11.287198372822111*cos(\t r)+0*11.287198372822111*sin(\t r)},{0*11.287198372822111*cos(\t r)+1*11.287198372822111*sin(\t r)});
\draw [shift={(12,-2)},line width=0.8pt,dotted]  plot[domain=1.373400766945016:1.7681918866447774,variable=\t]({1*10.19803902718557*cos(\t r)+0*10.19803902718557*sin(\t r)},{0*10.19803902718557*cos(\t r)+1*10.19803902718557*sin(\t r)});
\draw [shift={(5.490909090909096,-1.268181818181814)},line width=0.8pt]  plot[domain=0.8280723168235737:1.116062582183234,variable=\t]({1*12.58188468847372*cos(\t r)+0*12.58188468847372*sin(\t r)},{0*12.58188468847372*cos(\t r)+1*12.58188468847372*sin(\t r)});
\draw [shift={(7,13)},line width=0.8pt]  plot[domain=5.332638466367511:5.662935821196765,variable=\t]({1*8.602325267042627*cos(\t r)+0*8.602325267042627*sin(\t r)},{0*8.602325267042627*cos(\t r)+1*8.602325267042627*sin(\t r)});
\draw [shift={(13.6909090909091,2.331818181818186)},line width=0.8pt,dotted]  plot[domain=2.1479885406683588:2.569045057772295,variable=\t]({1*6.763955576533774*cos(\t r)+0*6.763955576533774*sin(\t r)},{0*6.763955576533774*cos(\t r)+1*6.763955576533774*sin(\t r)});
\draw [shift={(10,15)},line width=0.8pt]  plot[domain=4.493720034510748:4.931057926258632,variable=\t]({1*9.219544457292889*cos(\t r)+0*9.219544457292889*sin(\t r)},{0*9.219544457292889*cos(\t r)+1*9.219544457292889*sin(\t r)});
\draw [shift={(3.3818181818181863,11.004545454545458)},line width=0.8pt]  plot[domain=5.473061313919317:6.007431755606862,variable=\t]({1*11.050314119913528*cos(\t r)+0*11.050314119913528*sin(\t r)},{0*11.050314119913528*cos(\t r)+1*11.050314119913528*sin(\t r)});
\draw [shift={(2.236363636363641,7.622727272727277)},line width=0.8pt]  plot[domain=5.797788614245702:6.118489630866475,variable=\t]({1*9.90812443159986*cos(\t r)+0*9.90812443159986*sin(\t r)},{0*9.90812443159986*cos(\t r)+1*9.90812443159986*sin(\t r)});
\draw [shift={(16,11)},line width=0.8pt]  plot[domain=3.7001919689333556:4.153789665041127,variable=\t]({1*9.433981132056603*cos(\t r)+0*9.433981132056603*sin(\t r)},{0*9.433981132056603*cos(\t r)+1*9.433981132056603*sin(\t r)});
\draw [shift={(21.490909090909117,7.6954545454545515)},line width=0.8pt,dotted]  plot[domain=3.115095691849769:3.5624269997287135,variable=\t]({1*11.494944091617933*cos(\t r)+0*11.494944091617933*sin(\t r)},{0*11.494944091617933*cos(\t r)+1*11.494944091617933*sin(\t r)});
\draw [shift={(1.1834710743801737,5.379752066115707)},line width=0.8pt]  plot[domain=0.05727987679613458:0.4399013705715978,variable=\t]({1*10.83429764674472*cos(\t r)+0*10.83429764674472*sin(\t r)},{0*10.83429764674472*cos(\t r)+1*10.83429764674472*sin(\t r)});

\draw [shift={(16.571900826446296,8.900413223140495)},line width=0.8pt]  plot[domain=3.262195808322075:3.5717393210745323,variable=\t]({1*7.487288414843506*cos(\t r)+0*7.487288414843506*sin(\t r)},{0*7.487288414843506*cos(\t r)+1*7.487288414843506*sin(\t r)});
\draw [shift={(13.266115702479352,4.123553719008269)},line width=0.8pt,dotted]  plot[domain=2.0987727766296196:2.38755440724099,variable=\t]({1*5.64528893694586*cos(\t r)+0*5.64528893694586*sin(\t r)},{0*5.64528893694586*cos(\t r)+1*5.64528893694586*sin(\t r)});
\draw [shift={(7.894214876033069,1.4789256198347187)},line width=0.8pt,dotted]  plot[domain=1.005256267662155:1.2465566725912016,variable=\t]({1*7.956762366144609*cos(\t r)+0*7.956762366144609*sin(\t r)},{0*7.956762366144609*cos(\t r)+1*7.956762366144609*sin(\t r)});
\draw [shift={(6.257851239669432,5.280578512396698)},line width=0.8pt,dotted]  plot[domain=0.0019911326581333448:0.45904763128406056,variable=\t]({1*6.584189027925571*cos(\t r)+0*6.584189027925571*sin(\t r)},{0*6.584189027925571*cos(\t r)+1*6.584189027925571*sin(\t r)});
\draw [shift={(9.695867768595054,9.016115702479338)},line width=0.8pt]  plot[domain=5.123134542806779:5.414084142909398,variable=\t]({1*4.892803101701065*cos(\t r)+0*4.892803101701065*sin(\t r)},{0*4.892803101701065*cos(\t r)+1*4.892803101701065*sin(\t r)});
\draw [shift={(13.596694214876047,9.511983471074378)},line width=0.8pt]  plot[domain=3.9150180825666276:4.33978393869016,variable=\t]({1*5.337666606468009*cos(\t r)+0*5.337666606468009*sin(\t r)},{0*5.337666606468009*cos(\t r)+1*5.337666606468009*sin(\t r)});

\begin{scriptsize}
\draw [fill=black] (12,6) circle (1.5pt);
\draw[color=black] (12.2,5.7) node {$A_1$};
\draw [fill=black] (8,6) circle (1.5pt);
\draw[color=black] (7.7,5.8) node {$A_2$};
\draw [fill=black] (10,8) circle (1.5pt);
\draw[color=black] (10.4,7.8) node {$A_3$};
\draw [fill=black] (14,8) circle (1.5pt);
\draw[color=black] (14.5,8) node {$A_4$};
\draw [fill=black] (11,10) circle (1.5pt);
\draw[color=black] (11.1,10.4) node {$A_5$};
\draw [fill=black] (11,3) circle (1.5pt);
\draw[color=black] (11.1,2.7) node {$A_6$};

\draw [fill=black] (9.777476859661014,5.78314134577221) circle (1.5pt);
\draw[color=black] (9.5,5.2) node {$X_1$};
\draw [fill=black] (9.138998200698301,7.999610046415217) circle (1.5pt);
\draw[color=black] (8.9,8.3) node {$X_2$};
\draw [fill=black] (10.422096058815583,9.00011392555324) circle (1.5pt);
\draw[color=black] (10.6,8.7) node {$X_3$};
\draw [fill=black] (12.158019970073584,8.1968146834714) circle (1.5pt);
\draw[color=black] (12.3,8.5) node {$X_4$};
\draw [fill=black] (12.842027215730912,5.293688497534858) circle (1.5pt);
\draw[color=black] (13.3,5.3) node {$X_5$};
\draw [fill=black] (11.649529738558233,4.530279426027069) circle (1.5pt);
\draw[color=black] (11.2,4.5) node {$X_6$};
\end{scriptsize} 
\end{tikzpicture}
\includegraphics[width=0.35\textwidth]{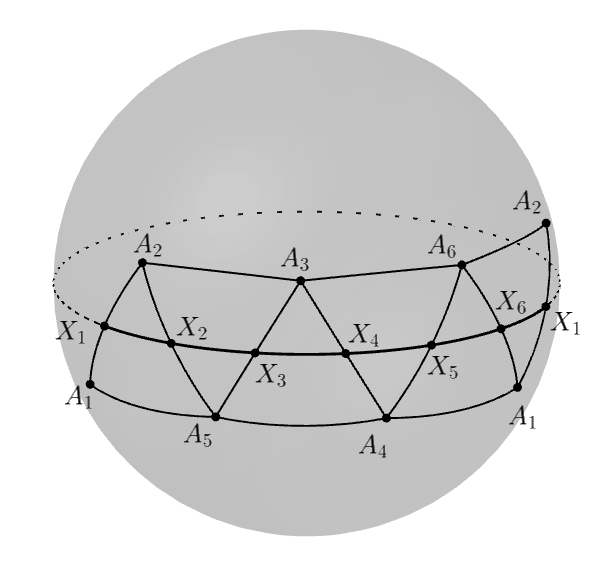}
  \caption{\,}
  \label{geod_bound_face}
\end{figure}

\textbf{Type 2.}  Denote by $X_1$, $X_2$, $X_3$ and $X_4$ the midpoint 
of the edges $A_1A_2$, $A_2A_3$, $A_3A_4$ and $A_4A_1$. 
Unfold two adjacent facets  $A_1A_2A_6$ and $A_2A_6A_3$ into a sphere and 
draw a geodesic line segment $X_1X_2$.
Since $\alpha < \pi/2$, then the segment  $X_1X_2$ is contained inside the development
and intersects the edge  $A_2A_6$  at right angle.
Then unfold another  two adjacent facets $A_2A_3A_5$ and $A_3A_5A_4$ 
and construct the segment $X_2X_3$.
In the same way connect the points  $X_3$ and $X_4$ within the edge  $A_4A_6$,
and connect  $X_4$ and $X_1$ within $A_1A_5$.
The broken line $X_1X_2X_3X_4$ form a geodesic on the spherical octahedron
with a planar angle $\alpha \in (\pi/3, \pi/2)$ 
(see Figure~\ref{geod_not_bound_face}).
The domain $D_1$ contains the edges $A_2A_5$ and $A_5A_4$  that don't bound a facet. 
The complementary domain $D_2$ contains the edges $A_1A_6$ and $A_6A_3$.  
 
\begin{figure}[h]
\centering
 \begin{tikzpicture}[line cap=round,line join=round,x=0.7cm,y=0.7cm]

\clip(5,2) rectangle (15,11);

\draw [shift={(19.52727272727274,4.404545454545459)},line width=0.8pt,dotted]  plot[domain=2.5608828750136485:2.7807322313850245,variable=\t]({1*10.199190739246786*cos(\t r)+0*10.199190739246786*sin(\t r)},{0*10.199190739246786*cos(\t r)+1*10.199190739246786*sin(\t r)});
\draw [shift={(18.272727272727284,1.3681818181818222)},line width=0.8pt]  plot[domain=2.2709485956750033:2.7180030412862193,variable=\t]({1*11.287198372822111*cos(\t r)+0*11.287198372822111*sin(\t r)},{0*11.287198372822111*cos(\t r)+1*11.287198372822111*sin(\t r)});
\draw [shift={(12,-2)},line width=0.8pt,dotted]  plot[domain=1.373400766945016:1.7681918866447774,variable=\t]({1*10.19803902718557*cos(\t r)+0*10.19803902718557*sin(\t r)},{0*10.19803902718557*cos(\t r)+1*10.19803902718557*sin(\t r)});
\draw [shift={(5.490909090909096,-1.268181818181814)},line width=0.8pt]  plot[domain=0.8280723168235737:1.116062582183234,variable=\t]({1*12.58188468847372*cos(\t r)+0*12.58188468847372*sin(\t r)},{0*12.58188468847372*cos(\t r)+1*12.58188468847372*sin(\t r)});
\draw [shift={(7,13)},line width=0.8pt]  plot[domain=5.332638466367511:5.662935821196765,variable=\t]({1*8.602325267042627*cos(\t r)+0*8.602325267042627*sin(\t r)},{0*8.602325267042627*cos(\t r)+1*8.602325267042627*sin(\t r)});
\draw [shift={(13.6909090909091,2.331818181818186)},line width=0.8pt,dotted]  plot[domain=2.1479885406683588:2.569045057772295,variable=\t]({1*6.763955576533774*cos(\t r)+0*6.763955576533774*sin(\t r)},{0*6.763955576533774*cos(\t r)+1*6.763955576533774*sin(\t r)});
\draw [shift={(10,15)},line width=0.8pt]  plot[domain=4.493720034510748:4.931057926258632,variable=\t]({1*9.219544457292889*cos(\t r)+0*9.219544457292889*sin(\t r)},{0*9.219544457292889*cos(\t r)+1*9.219544457292889*sin(\t r)});
\draw [shift={(3.3818181818181863,11.004545454545458)},line width=0.8pt]  plot[domain=5.473061313919317:6.007431755606862,variable=\t]({1*11.050314119913528*cos(\t r)+0*11.050314119913528*sin(\t r)},{0*11.050314119913528*cos(\t r)+1*11.050314119913528*sin(\t r)});
\draw [shift={(2.236363636363641,7.622727272727277)},line width=0.8pt]  plot[domain=5.797788614245702:6.118489630866475,variable=\t]({1*9.90812443159986*cos(\t r)+0*9.90812443159986*sin(\t r)},{0*9.90812443159986*cos(\t r)+1*9.90812443159986*sin(\t r)});
\draw [shift={(16,11)},line width=0.8pt]  plot[domain=3.7001919689333556:4.153789665041127,variable=\t]({1*9.433981132056603*cos(\t r)+0*9.433981132056603*sin(\t r)},{0*9.433981132056603*cos(\t r)+1*9.433981132056603*sin(\t r)});
\draw [shift={(21.490909090909117,7.6954545454545515)},line width=0.8pt,dotted]  plot[domain=3.115095691849769:3.5624269997287135,variable=\t]({1*11.494944091617933*cos(\t r)+0*11.494944091617933*sin(\t r)},{0*11.494944091617933*cos(\t r)+1*11.494944091617933*sin(\t r)});
\draw [shift={(1.1834710743801737,5.379752066115707)},line width=0.8pt]  plot[domain=0.05727987679613458:0.4399013705715978,variable=\t]({1*10.83429764674472*cos(\t r)+0*10.83429764674472*sin(\t r)},{0*10.83429764674472*cos(\t r)+1*10.83429764674472*sin(\t r)});

\draw [shift={(15.067768595041336,3.6938016528925672)},line width=0.8pt,dotted]  plot[domain=2.291862570920127:2.6303444967581973,variable=\t]({1*7.047038974897471*cos(\t r)+0*7.047038974897471*sin(\t r)},{0*7.047038974897471*cos(\t r)+1*7.047038974897471*sin(\t r)});
\draw [shift={(7.183471074380177,3.925206611570253)},line width=0.8pt,dotted]  plot[domain=0.7843889226740518:1.002536281886817,variable=\t]({1*6.02779947585806*cos(\t r)+0*6.02779947585806*sin(\t r)},{0*6.02779947585806*cos(\t r)+1*6.02779947585806*sin(\t r)});

\draw [shift={(6.42314049586778,4.024380165289261)},line width=0.8pt,dotted]  plot[domain=0.19105187646110797:0.6911721126675402,variable=\t]({1*6.518388990675975*cos(\t r)+0*6.518388990675975*sin(\t r)},{0*6.518388990675975*cos(\t r)+1*6.518388990675975*sin(\t r)});
\draw [shift={(6.571900826446292,7.065702479338844)},line width=0.8pt]  plot[domain=6.002295407145787:6.26308273361386,variable=\t]({1*6.506002718784147*cos(\t r)+0*6.506002718784147*sin(\t r)},{0*6.506002718784147*cos(\t r)+1*6.506002718784147*sin(\t r)});

\draw [shift={(8.059504132231416,3.660743801652898)},line width=0.8pt]  plot[domain=0.5760171584082338:0.954672474053413,variable=\t]({1*6.010248406928756*cos(\t r)+0*6.010248406928756*sin(\t r)},{0*6.010248406928756*cos(\t r)+1*6.010248406928756*sin(\t r)});
\draw [shift={(15.613223140495883,4.0574380165289305)},line width=0.8pt]  plot[domain=2.305254223149517:2.853264035990184,variable=\t]({1*6.0824220025026845*cos(\t r)+0*6.0824220025026845*sin(\t r)},{0*6.0824220025026845*cos(\t r)+1*6.0824220025026845*sin(\t r)});

\draw [shift={(17.18347107438018,6.73512396694215)},line width=0.8pt,dotted]  plot[domain=3.0929532528026713:3.422115278362287,variable=\t]({1*8.262860348957709*cos(\t r)+0*8.262860348957709*sin(\t r)},{0*8.262860348957709*cos(\t r)+1*8.262860348957709*sin(\t r)});
\draw [shift={(13.728925619834724,3.3136363636363693)},line width=0.8pt]  plot[domain=2.580954542922314:2.896697184403031,variable=\t]({1*4.643494774235665*cos(\t r)+0*4.643494774235665*sin(\t r)},{0*4.643494774235665*cos(\t r)+1*4.643494774235665*sin(\t r)});

\begin{scriptsize}
\draw [fill=black] (12,6) circle (1.5pt);
\draw[color=black] (12.2,5.7) node {$A_1$};
\draw [fill=black] (8,6) circle (1.5pt);
\draw[color=black] (7.7,5.8) node {$A_2$};
\draw [fill=black] (10,8) circle (1.5pt);
\draw[color=black] (10.4,7.8) node {$A_3$};
\draw [fill=black] (14,8) circle (1.5pt);
\draw[color=black] (14.5,8) node {$A_4$};
\draw [fill=black] (11,10) circle (1.5pt);
\draw[color=black] (11.1,10.4) node {$A_5$};
\draw [fill=black] (11,3) circle (1.5pt);
\draw[color=black] (11.1,2.7) node {$A_6$};

\draw [fill=black] (9.796275678732934,5.782706666221134) circle (1.5pt);
\draw[color=black] (9.9,5.3) node {$X_1$};
\draw [fill=black] (8.930382900217605,7.136866093032252) circle (1.5pt);
\draw[color=black] (9.3,6.8) node {$X_2$};
\draw [fill=black] (11.450068472545215,8.183200641994208) circle (1.5pt);
\draw[color=black] (11.4,7.7) node {$X_3$};
\draw [fill=black] (13.099931056521378,6.934454591243587) circle (1.5pt);
\draw[color=black] (13.1,7.4) node {$X_4$};

\draw [fill=black] (10.415412788712523,8.98684839036264) circle (1pt);
\draw [fill=black] (11.536875710531728,8.5717761791784) circle (1pt);
\draw [fill=black] (9.219595800748415,4.440570230974992) circle (1pt);
\draw [fill=black] (12.822927631313716,5.262168376069461) circle (1pt);
\end{scriptsize}
\end{tikzpicture}
\includegraphics[width=0.4\textwidth]{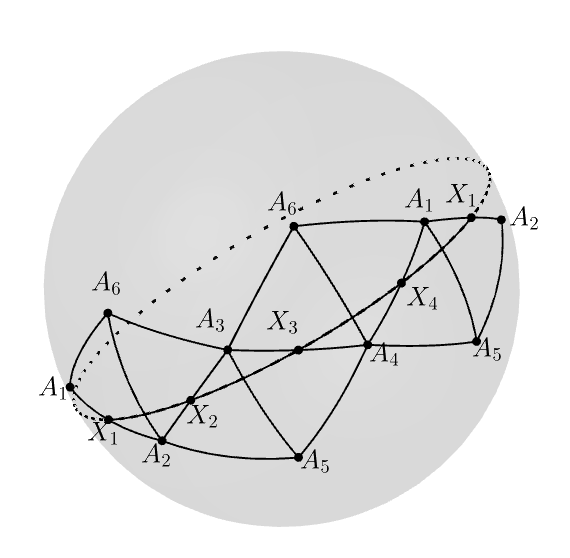}
\caption{ }
\label{geod_not_bound_face}
\end{figure}

Any simple closed broken line $\sigma$ on the octahedron, 
that enclosed the domain with the edges $A_2A_5$ and $A_5A_4$  from one side
and the domain with  $A_1A_6$ and $A_6A_3$ from other,
 is equivalent to $\gamma$.
 If $\sigma$ starts at the edge $A_1A_2$, 
then $\sigma$ can only go to  the edge $A_2A_6$, 
and can not intersect $A_1A_6$. 
After, $\sigma$ goes to the edge $A_2A_3$, since it can not go to $A_3A_6$.
After, $\sigma$ intersects $A_3A_5$ and can not intersect $A_2A_5$.
Continuing similarly we see  that $\sigma$ intersects same edges as $\gamma$
in the same order.
By Lemma~\ref{uniqness} follows that $\gamma$ is a unique geodesic, that bound
the domain with $A_2A_5$ and $A_5A_4$
from one side and with $A_1A_6$ and $A_6A_3$  from another.

In general there are six geodesics of such type and 
they  coincide up to octahedral symmetry.
\end{proof}

\section{Cube}

For any $\alpha \in (\pi/2, 2\pi/3)$ 
a geodesic $\gamma$ divides the surface of the cube with a planar angle $\alpha$
into two domains $D_1$ and $D_2$.

\begin{lemma}\label{edge_inside_cube}
If the domain $D_i$  contains a vertex of a cube, then 
$D_i$ contains at least one edge coming out of this vertex.
\end{lemma}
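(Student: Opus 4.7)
The plan is to argue by contradiction, following the scheme of Lemma~\ref{edge_inside_oct} adapted to the combinatorics of the cube, where each vertex is incident to three edges and three faces rather than four. Assume $D_1$ contains a cube vertex $A$ but no edge of the cube emanating from $A$. Since each of the three edges $e_1, e_2, e_3$ through $A$ has a small sub-arc near $A$ lying in $D_1$, the geodesic $\gamma$ must cross every $e_k$; let $X_k \in e_k$ be the crossing of $\gamma$ closest to $A$, and set $d_k = \dist(A, X_k) \in (0, a_s)$. Each face $F$ at $A$ is bounded by two of these edges, say $e_i$ and $e_j$. By minimality of $X_i, X_j$, the segments of $\gamma$ emanating from $X_i$ and $X_j$ inside $F$ either join $X_i$ directly to $X_j$ across $F$, or exit $F$ through one of its two edges not touching $A$. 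I split the argument according to how many of the three faces at $A$ carry a direct segment $X_iX_j$.

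\emph{Case (a): at least two direct segments.} Any two faces at $A$ share an edge through $A$, so any two direct segments share a common endpoint $X_k$. The geodesic condition at $X_k$ is that, after unfolding the two faces adjacent to $e_k$ along $e_k$, the two direct segments align into a single great-circle arc through $X_k$. Placing $A$ at the north pole with $e_k$ along the zero meridian and computing the great circle through the unfolded points $X_i$ at (polar, azimuth) $(d_i, \alpha)$ and $X_j$ at $(d_j, -\alpha)$, this condition reduces to
\begin{equation*}
\tan d_k \;=\; \frac{2\,\sin d_i\,\sin d_j\,\cos\alpha}{\sin(d_i+d_j)}.
\end{equation*}
For $\alpha \in (\pi/2, 2\pi/3)$ we have $\cos\alpha < 0$, so the right-hand side is negative, while $\tan d_k > 0$ because $d_k \in (0, a_s) \subset (0, \pi/2)$ by~(\ref{a_cube}). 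This is a contradiction, so Case~(a) does not occur.

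\emph{Case (b): at most one direct segment.} Then from at least two of the points $X_k$, both continuations of $\gamma$ leave the local neighborhood of $A$ into far edges of the adjacent faces. Tracing $\gamma$ further through successive faces, I would extract sub-regions $G_1, G_2$ each bounded by a sub-arc of $\gamma$ together with a sub-arc $\sigma_i$ of a single cube edge, mimicking the construction of $G_1, G_2$ in Cases~1--2 of the octahedron argument. Each $G_i$ is a locally convex disc, and $G_1, G_2$ are disjoint. If $G_i$ contains no cube vertex, then $G_i$ is a spherical lune, forcing $|\sigma_i| = \pi$; but $|\sigma_i| \le a_s = \arccos(\cot^2(\alpha/2)) < \pi/2$ by~(\ref{a_cube}), a contradiction. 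Otherwise $G_i$ contains a cube vertex $A'$, which inherits the ``no inner edge'' hypothesis, and I iterate the construction inside $G_i$. Since area strictly decreases under iteration, some iterate is vertex-free and produces the lune contradiction.

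The main obstacle will be in Case~(b): one has to verify cleanly that the sub-regions $G_1, G_2$ produced at each step are indeed disjoint inside the cube and that each $\sigma_i$ lies within a single cube edge rather than straddling a vertex. These are the same technical points that arise in the octahedron proof and are handled using the simplicity of $\gamma$ together with the local convexity of $D_1$ near $A$; the quantitative input is the bound $a_s < \pi/2$, which plays the same role that the corresponding edge-length bound plays on the octahedron.
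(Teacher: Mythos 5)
Your Case (a) is fine: the collinearity computation after unfolding is correct, and the sign of $\cos\alpha$ for $\alpha\in(\pi/2,2\pi/3)$ does rule out two direct segments; the paper gets the same conclusion in one line from the fact that a geodesic cannot cross three edges at a vertex sequentially when $\alpha>\pi/2$, but your trigonometric verification is a legitimate substitute.

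The genuine gap is in Case (b), which is the heart of the lemma and which you only sketch by ``mimicking'' the octahedron argument. On the cube the faces are quadrilaterals, and the arcs of $\gamma$ leaving the points $X_i$ can exit a face through either of its two far edges; in particular, in the configuration with one direct segment the two exit points on a given face need not lie on the same cube edge at all. So your claimed structure --- each region $G_i$ bounded by a $\gamma$-arc and a sub-arc $\sigma_i$ of a \emph{single cube edge}, with $|\sigma_i|\le a_s<\pi/2$ --- is not available, and the difficulty you flag at the end (``that each $\sigma_i$ lies within a single cube edge'') is not a technicality but exactly the point where the construction breaks. The paper's proof avoids this by cutting along the \emph{diagonals} of the three facets at the vertex (the diagonals joining the two neighbours of $A_4$ in each face): it shows the outgoing arcs all cross these diagonals, that no point of $\gamma$ lies between the two crossing points on each diagonal (a lune argument using that the diagonal length is $<\pi$ by~(\ref{diagonal}), not $<\pi/2$), and then bounds the regions $G_i$ by $\gamma$-arcs and diagonal segments. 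A second gap is your termination argument: ``$A'$ inherits the no-inner-edge hypothesis'' is unjustified, since the hypothesis was made only for the original vertex, and other vertices of $D_1$ may well have edges inside $D_1$. The paper handles this with a counting step you omit: one may assume $D_1$ contains at most four cube vertices, whence each $G_i$ contains exactly one vertex, all of whose edges are crossed by $\gamma$, and a single second application of the construction at that vertex produces a vertex-free region bounded by a $\gamma$-arc and a diagonal segment, giving the lune contradiction without any open-ended iteration.
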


\begin{proof}

 Since the plane angle $\alpha$ of a cube is greater than $\pi/2$,
 then a simple geodesic cannot sequentially cross all three edges
 coming out of a vertex of a cube. 
Therefore, a domain $D_i$ contains at least two vertices of the cube.

Without loss of generality, we can assume the domain $D_1$ has at most $4$
vertices of the cube. If not, we can consider $D_2$. 
For proof of this lemma, we will use the following labeling of the vertices of the cube.
Fix the vertex $A_4$.
Let $A_1$, $A_2$ and $A_3$ be vertices such that
there is an edge $A_4A_i$ on a cube. 
Let $A'_4$ be a vertex such that $A_4$ and $A'_4$ do not belong to any 
facet of the cube and 
let $A'_1$, $A'_2$ and $A'_3$ be the endpoints of the edges coming out of  $A'_4$.
For certainty choose $A'_1$ such that $A'_1$ form a facet with $A_1$, $A_3$ and $A_4$.

Suppose $D_1$ contains the vertex $A_4$ inside 
and does not have any edge coming out of it.
Then $\gamma$ intersects all edges $A_4A_i$, $i=1,2,3$ at least once.
Let $X_i$ be the closest point of $\gamma$ to $A_4$ on their edge $A_4A_i$, $i=1,2,3$.

\textbf{Case 1.}
Assume there is no $X_iX_{i+1}$ segments of $\gamma$
inside the facets $A_4A_iA'_{i+1}A_{i+1}$,
$i=1,2,3$, when $i+1>3$   take $i+1 \mod 4$.
On the facet $A_4A_1A'_2A_2$ the geodesic $\gamma$ goes
from the point $X_1$ to the point $Y_1$.
If $Y_1$   lies on the edge $A_2A_4$,  then $\dist(A_4, X_2)< \dist(A_4, Y_1)$
and the segment of $\gamma$ coming out of $X_2$ intersects $X_1Y_1$, that 
leads to a contradiction. 
Hence, $Y_1$ belongs to the edge $A'_2A_2$ or $A_1A'_2$. 

\textit{Subcase 1.1.} If $Y_1$ belongs to the edge $A'_2A_2$, 
then a segment of $\gamma$ coming out of $X_2$ on this facet goes to the point $Y_2$
on the edge $A_2A'_2$ and 
$\dist(A_2, Y_2)< \dist(A_2, Y_1)$.

Now consider the facet $A_4A_2A'_3A_3$. 
The segment of $\gamma$ coming out of $X_2$ goes to the point $Z_2$,
and from the point $X_3$ goes to the point $Y_3$.
If $Z_2$ lies on the segment $A_4A_3$, then $\dist(A_4, X_3)< \dist(A_4, Z_2)$. 
In this case the segment $X_3Y_3$ intersects $X_2Z_2$. 
If $Z_2$ lies on the edge $A_2A'_3$, then $\gamma$ intersects all three edges 
coming out of $A_2$.
Hence $Z_2$ should lie on the edge $A_3A'_3$. 
The point  $Y_3$ also belong to the edge $A_3A'_3$ and $\dist(A_3, Y_3)<\dist(A_3, Z_2)$. 
On the facet $A_4A_3A'_1A_1$ the segment of $\gamma$ goes to 
a point $Z_3$  starting at $X_3$  and goes to the point $Z_1$ starting at $X_1$. 
As before $Z_3$ can not be on the edge $A_1A_4$ and on the edge $A'_1A_3$. 
Hence both points $Z_3$  and $Z_1$ lie on $A_1A'_1$ 
and $\dist(A_1, Z_1) < \dist(A_1, Z_3)$. 

\textit{Subcase 1.2.} 
If $Y_1$ belongs to the edge $A_1A'_2$, then on the facet $A_4A_3A'_1A_1$
the point $Z_1$ can be only on the edge $A'_1A_3$.
Hence $Z_3$ should also be on the edge $A'_1A_3$.
From this follows on the facet $A_4A_2A'_3A_3$
the point $Y_3$ lies on $A_2A'_3$ and thus $Z_2$ as well.
But then $Y_2$ can be only on $A_1A'_2$. 

Notice, that the case, when $Y_1$ and $Y_2$ do not belong to the same edge 
is not possible. 
In fact, if $Y_1$ lies on $A_1A'_2$ and $Y_2$ lies on $A_2A'_2$, 
then $Z_1$ should belong to $A'_1A_3$ and $Z_2$ belongs to $A_3A'_3$.
But then $Z_3$ is also on $A'_1A_3$ and  $Q_3$ is on $A_3A'_3$.
Hence the segment $Z_3X_3Y_3$ of $\gamma$ intersects all three edges coming out of 
$A_3$ that leads to a contradiction.

In both Subcases $1.1$ and $1.2$ the segments $X_1Y_1$ and $X_2Y_2$ intersect
the diagonal $A_1A_2$ at the points $Q_1$ and $P_1$ respectively.
The segments  $X_2Z_2$ and $X_3Y_3$ intersect
the diagonal $A_2A_3$ at the points $Q_2$ and $P_2$ respectively.
And the segments $X_3Z_3$ and $X_1Z_1$ intersect
the diagonal $A_1A_3$ at the points $Q_3$ and $P_3$ respectively.

 There is no points of $\gamma$ between $Q_i$ and $P_i$, $i=1,2,3$.
 Assume it's not true and there is a point $R$ on the segment $P_1Q_1$.
Since there is no points of $\gamma$ on the segments $A_4X_i$,
then the part of a geodesic that starts at $R$
and goes inside the triangle $A_1A_3A_4$ should intersect $P_1Q_1$ again at a point $S$.
The segment $RS$ does not intersect any edge of the cube.
Then the segment $RS$ of $\gamma$ and segment of the diagonal form a lune on a sphere.
Hence the length of the segment of the diagonal $A_1A_3$ between $R$ and $S$ equals $\pi$
that contradicts to (\ref{diagonal}).

Consider the domain of the cube outside the triangles $A_4A_iA_{i+1}$,
$i=1,2,3$, when $i+1>3$ then take $i+1\mod 4$.
Consider the segment $\gamma_i$ of the geodesic $\gamma$ coming out of $P_i$. 
Since the domain $D_i$ is homeomorphic to a disc, 
then it's easy to show, that $\gamma_i$ comes to the point $Q_i$,
and does not have other points $P_j$, $Q_k$, $j,k\neq i$ on it. 
Denote by $\sigma_i$ the part of the diagonal 
of the facet between the points $Q_i$ and $P_i$.
The segments $\gamma_i$ and $\sigma_i$ enclose a domain $G_i$ inside $D_1$.
Each $G_i$ is locally convex and homeomorphic to a disc, $i=1,2,3$.

 \begin{minipage}{0.4\linewidth}
     \begin{figure}[H]
         \begin{tikzpicture}[line cap=round,line join=round,x=0.65cm,y=0.65cm]

\clip(2,2) rectangle (15,12);

\draw [line width=0.8pt] (9,8)-- (5,8);
\draw [line width=0.8pt] (9,8)-- (12,10);
\draw [line width=0.8pt] (9,8)-- (9,4);
\draw [line width=0.8pt] (9,4)-- (12,6);
\draw [line width=0.8pt] (12,6)-- (12,10);
\draw [line width=0.8pt] (5,8)-- (5,4);
\draw [line width=0.8pt] (5,4)-- (9,4);
\draw [line width=0.8pt] (5,8)-- (8,10);
\draw [line width=0.8pt] (8,10)-- (12,10);
\draw [line width=0.8pt] (7.575854681749002,8)-- (9.684881515908215,10);
\draw [line width=0.8pt] (5,6.422194535186725)-- (7.575854681749002,8);
\draw [line width=0.8pt] (5,6.422194535186725)-- (2.9906881920375037,5.368025827084078);
\draw [line width=0.8pt] (5,5.377355715963261)-- (8.321928810384462,7.2269397135808475);
\draw [line width=0.8pt] (10.072992318329009,4.715328212219339)-- (9.329058802140915,6.758668053241739);

\draw [shift={(8.656111766030888,6.616994993008048)},line width=0.8pt]  plot[domain=0.20749622643520388:2.0720185650167093,variable=\t]({1*0.6876983127834834*cos(\t r)+0*0.6876983127834834*sin(\t r)},{0*0.6876983127834834*cos(\t r)+1*0.6876983127834834*sin(\t r)});
\draw [line width=0.8pt] (5,5.377355715963261)-- (3.479273457942146,4.571812801165403);
\draw [shift={(3.2078371991062338,4.9518235635356795)},line width=0.8pt]  plot[domain=2.0516836069902338:5.332638466367512,variable=\t]({1*0.4694443688593419*cos(\t r)+0*0.4694443688593419*sin(\t r)},{0*0.4694443688593419*cos(\t r)+1*0.4694443688593419*sin(\t r)});
\draw [line width=0.8pt] (10.949940231491189,5.299960154327459)-- (9.975647919414985,7.449037144826071);
\draw [shift={(12.346191246581956,8.263345921333807)},line width=0.8pt]  plot[domain=2.3767545561551477:3.472475374205845,variable=\t]({1*2.508092758006147*cos(\t r)+0*2.508092758006147*sin(\t r)},{0*2.508092758006147*cos(\t r)+1*2.508092758006147*sin(\t r)});
\draw [line width=0.8pt] (10.072992318329009,4.715328212219339)-- (10.783267594403467,3.2792811035810665);
\draw [line width=0.8pt] (10.949940231491189,5.299960154327459)-- (11.803318728910131,3.9746530317263957);
\draw [shift={(11.281826870841563,3.6452448277749863)},line width=0.8pt]  plot[domain=-2.5083827766654605:0.5633778416984945,variable=\t]({1*0.6184584056736241*cos(\t r)+0*0.6184584056736241*sin(\t r)},{0*0.6184584056736241*cos(\t r)+1*0.6184584056736241*sin(\t r)});
\draw [line width=0.8pt] (9.684881515908215,10)-- (10.627094940621177,11.104378763816355);
\draw [line width=0.8pt] (10.536616187675872,10)-- (11.230833797235123,10.772094805867756);
\draw [shift={(10.919779109955785,10.920216085524583)},line width=0.8pt]  plot[domain=-0.4444192099010982:2.579964619369558,variable=\t]({1*0.3445213084347943*cos(\t r)+0*0.3445213084347943*sin(\t r)},{0*0.3445213084347943*cos(\t r)+1*0.3445213084347943*sin(\t r)});

\draw [line width=0.8pt] (5,8)-- (12,10);
\draw [line width=0.8pt] (12,10)-- (9,4);
\draw [line width=0.8pt] (9,4)-- (5,8);

\draw (11.3,11.5) node[anchor=north west] {$\gamma_1$};
\draw (11.8,3.3) node[anchor=north west] {$\gamma_2$};
\draw (2.3,4.8) node[anchor=north west] {$\gamma_3$};

\begin{scriptsize}

\draw [fill=black] (9,8) circle (1.5pt);
\draw[color=black] (9.3,7.9) node {$A_4$};

\draw [fill=black] (5,8) circle (1.5pt);
\draw[color=black] (4.6,8.0) node {$A_1$};
\draw [fill=black] (12,10) circle (1.5pt);
\draw[color=black] (12.35,10.0) node {$A_2$};
\draw [fill=black] (9,4) circle (1.5pt);
\draw[color=black] (9.2,3.7) node {$A_3$};

\draw [fill=black] (5,4) circle (1.5pt);
\draw[color=black] (4.7,3.9) node {$A'_1$};
\draw [fill=black] (8,10) circle (1.5pt);
\draw[color=black] (8.0,10.3) node {$A'_2$};
\draw [fill=black] (12,6) circle (1.5pt);
\draw[color=black] (12.25,5.9) node {$A'_3$};

\draw [fill=black] (7.575854681749002,8) circle (1.5pt);
\draw[color=black] (7.7,7.65) node {$X_1$};
\draw [fill=black] (9.684881515908215,10) circle (1.5pt);
\draw[color=black] (9.5,10.35) node {$Y_1$};
\draw [fill=black] (5,6.422194535186725) circle (1.5pt);
\draw[color=black] (4.655,6.6) node {$Z_1$};

\draw [fill=black] (9.857087578753067,8.571391719168712) circle (1.5pt);
\draw[color=black] (10.2,8.5) node {$X_2$};
\draw [fill=black] (10.536616187675872,10) circle (1.5pt);
\draw[color=black] (11.15,10.3) node {$Y_2$};
\draw [fill=black] (10.949940231491189,5.299960154327459) circle (1.5pt);
\draw[color=black] (11.3,5.2) node {$Z_2$};

\draw [fill=black] (9,7.212536638858999) circle (1.5pt);
\draw[color=black] (8.65,7.0) node {$X_3$};
\draw [fill=black] (10.072992318329009,4.715328212219339) circle (1.5pt);
\draw[color=black] (9.85,4.2) node {$Y_3$};
\draw [fill=black] (5,5.377355715963261) circle (1.5pt);
\draw[color=black] (4.6,5.5) node {$Z_3$};

\draw [fill=black] (10.147891763968762,9.47082621827679) circle (1.5pt);
\draw[color=black] (10.317157115915963,9.2) node {$P_1$};
\draw [fill=black] (8.686583868047734,9.053309676585068) circle (1.5pt);
\draw[color=black] (8.558811626295716,9.471505254209116) node {$Q_1$};

\draw [fill=black] (9.771590190827649,5.543180381655296) circle (1.5pt);
\draw[color=black] (10.12,5.58) node {$P_2$};
\draw [fill=black] (10.331762693434765,6.663525386869529) circle (1.5pt);
\draw[color=black] (10.7,6.7) node {$Q_2$};

\draw [fill=black] (5.97846175421745,7.02153824578255) circle (1.5pt);
\draw[color=black] (5.87,6.7) node {$P_3$};
\draw [fill=black] (6.684659388844798,6.315340611155201) circle (1.5pt);
\draw[color=black] (6.6,6.0) node {$Q_3$};

\end{scriptsize}
\end{tikzpicture}
         \caption{\;  }
         \label{around_A_1_cube_1}
     \end{figure}
 \end{minipage}
      \hspace{0.05\linewidth}
 \begin{minipage}{0.4\linewidth}
     \begin{figure}[H]
        \begin{tikzpicture}[line cap=round,line join=round,x=0.65cm,y=0.65cm]

\clip(2,2) rectangle (15,12);

\draw [line width=0.8pt] (9,8)-- (5,8);
\draw [line width=0.8pt] (9,8)-- (12,10);
\draw [line width=0.8pt] (9,8)-- (9,4);
\draw [line width=0.8pt] (9,4)-- (12,6);
\draw [line width=0.8pt] (12,6)-- (12,10);
\draw [line width=0.8pt] (5,8)-- (5,4);
\draw [line width=0.8pt] (5,4)-- (9,4);
\draw [line width=0.8pt] (5,8)-- (8,10);
\draw [line width=0.8pt] (8,10)-- (12,10);
\draw [line width=0.8pt] (5,8)-- (12,10);
\draw [line width=0.8pt] (12,10)-- (9,4);
\draw [line width=0.8pt] (9,4)-- (5,8);

\draw [line width=0.8pt] (7.575854681749002,8)-- (9.770337449674232,8.51355829978282);
\draw [line width=0.8pt] (9.770337449674232,8.51355829978282)-- (12,8.51271133494717);
\draw [line width=0.8pt] (7.575854681749002,8)-- (5,5.2381907478039595);
\draw [line width=0.8pt] (5,5.2381907478039595)-- (3.09688496309929,3.6520447917313748);
\draw [line width=0.8pt] (6.007332671137244,4)-- (4.013139490774205,2.2110341934399305);

\draw [shift={(3.5550122269367472,2.9315394925856526)},line width=0.8pt]  plot[domain=2.1371539107662105:5.278746564356004,variable=\t]({1*0.8538199318171613*cos(\t r)+0*0.8538199318171613*sin(\t r)},{0*0.8538199318171613*cos(\t r)+1*0.8538199318171613*sin(\t r)});
\draw [line width=0.8pt] (12,8.51271133494717)-- (14.093941162202173,8.431969365790762);
\draw [line width=0.8pt] (12,6.939752162011551)-- (13.819418467070383,7.027059102469253);
\draw [shift={(13.956679814636278,7.729514234130008)},line width=0.8pt]  plot[domain=-1.7637671367604666:1.3778255168293263,variable=\t]({1*0.7157400991506161*cos(\t r)+0*0.7157400991506161*sin(\t r)},{0*0.7157400991506161*cos(\t r)+1*0.7157400991506161*sin(\t r)});
\draw [line width=0.8pt] (6.007332671137244,4)-- (7.583934037470551,5.416065962529449);
\draw [line width=0.8pt] (10.402846384014591,6.805692768029183)-- (12,6.939752162011551);
\draw [shift={(10.86426239594582,2.3924700728914012)},line width=0.8pt]  plot[domain=1.6749709144578102:2.396897814290708,variable=\t]({1*4.437278365501279*cos(\t r)+0*4.437278365501279*sin(\t r)},{0*4.437278365501279*cos(\t r)+1*4.437278365501279*sin(\t r)});

\draw (2.,2.5) node[anchor=north west] {$\gamma_1$};
\draw (13.5,8) node[anchor=north west] {$\gamma_2$};

\begin{scriptsize}
\draw [fill=black] (9,8) circle (1.5pt);
\draw[color=black] (9.3,7.8) node {$A_4$};
\draw [fill=black] (5,8) circle (1.5pt);
\draw[color=black] (4.5,8.15) node {$A_1$};
\draw [fill=black] (12,10) circle (1.5pt);
\draw[color=black] (12.4,10.0) node {$A_2$};
\draw [fill=black] (9,4) circle (1.5pt);
\draw[color=black] (9.2,3.75) node {$A_3$};

\draw [fill=black] (5,4) circle (1.5pt);
\draw[color=black] (4.65,3.8) node {$A'_1$};
\draw [fill=black] (8,10) circle (1.5pt);
\draw[color=black] (8.12,10.4) node {$A'_2$};
\draw [fill=black] (12,6) circle (1.5pt);
\draw[color=black] (12.3,5.85) node {$A'_3$};

\draw [fill=black] (7.575854681749002,8) circle (1.5pt);
\draw[color=black] (7.7,7.7) node {$X_1$};
\draw [fill=black] (9.770337449674232,8.51355829978282) circle (1.5pt);
\draw[color=black] (9.5,8.9) node {$X_2$};
\draw [fill=black] (9,6.419126876246768) circle (1.5pt);
\draw[color=black] (8.5,6.65) node {$X_3$};

\draw [fill=black] (12,8.51271133494717) circle (1.5pt);
\draw[color=black] (12.25,8.85) node {$Y_2$};
\draw [fill=black] (12,6.939752162011551) circle (1.5pt);
\draw[color=black] (12.25,7.3) node {$Y_3$};

\draw [fill=black] (11.256496881891017,8.512993763782031) circle (1.5pt);
\draw[color=black] (11.1,8.95) node {$P_2$};
\draw [fill=black] (10.402846384014591,6.805692768029183) circle (1.5pt);
\draw[color=black] (10.55,6.5) node {$Q_2$};

\draw [fill=black] (5,5.2381907478039595) circle (1.5pt);
\draw[color=black] (4.6,5.56) node {$Y_1$};
\draw [fill=black] (6.007332671137244,4) circle (1.5pt);
\draw[color=black] (6.1,3.65) node {$Z_3$};

\draw [fill=black] (6.332796403146523,6.667203596853477) circle (1.5pt);
\draw[color=black] (6.3,6.25) node {$Q_1$};
\draw [fill=black] (7.583934037470551,5.416065962529449) circle (1.5pt);
\draw[color=black] (7.13,5.5) node {$P_1$};

\end{scriptsize}
\end{tikzpicture}
        \caption{\;}
        \label{around_A_1_cube_2}
    \end{figure}
 \end{minipage}

\textbf{ Case 2.}
Assume there is only one of the three segments $X_iX_{i+1}$, $i=1,2,3$ of $\gamma$
inside the facets $A_4A_iA'_{i+1}A_{i+1}$, for example $X_1X_2$.
If there are two of them, then $\gamma$ intersects sequentially
three edges coming out of $A_4$.

On the facet $A_4A_2A'_3A_3$ the geodesic $\gamma$ goes
 from the point $X_2$ to the point $Y_2$ and from the point $X_3$ to $Y_3$.
On the facet $A_4A_3A'_1A_1$ $\gamma$  goes
from the point $X_1$ to the point $Y_1$ and from the point $X_3$ to $Z_3$.

If $Y_2$ is on the edge $A_3A'_3$, then $Y_3$ should also be on this edge.
In this case $Z_3$ should be on the edge $A_1A'_1$.
If $Y_2$ is on the edge $A_2A'_3$, then   $Y_3$ can be either on
$A_2A'_3$ or on $A_3A'_3$ as long as $X_2Y_2$ and $X_3Y_3$ do  not intersect.
Similar conditions hold  for $Y_1$ and $Z_1$.

In any case, $X_2Y_2$ crosses the diagonal of $A_2A_3$ at the point $P_2$
and $X_3Y_3$ intersects $A_2A_3$ at $Q_2$.
On the facet  $A_4A_3A'_1A_1$ the segment $X_3Z_3$ intersects $A_1A_3$ at $P_1$ 
and $X_1Y_1$ intersects $A_1A_3$ at $Q_1$.
There is no other points of the geodesic between $Q_i$ and $P_i$, $i=1,2$.

Consider  the domain of the cube outside the triangles $A_4A_iA_{i+1}$,
$i=1,2,3$, when $i+1>3$ then take $i+1\mod 4$.
Consider the segment $\gamma_i$ of the geodesic $\gamma$ coming out of $P_i$, $i=1,2$. 
Since the domain $D_i$ is homeomorphic to a disc, 
then it's easy to show, that $\gamma_i$ comes to the point $Q_i$,
and does not have other points $P_j$, $Q_j$, $j\neq i$ on it. 
Denote by $\sigma_i$ the parts of the diagonals 
of the facets between the points $Q_i$ and $P_j$.
The segments $\gamma_i$ and $\sigma_i$ enclose a domain $G_i$ inside $D_1$.
Each $G_i$ is locally convex and homeomorphic to a disc, $i=1,2$. \\

Let $l=2$ or $3$.
In both Cases 1 and 2 we obtain $l$ domains $G_i$ 
bounded by the segment of $\gamma_i$
and the segment  $\sigma_i$ of the diagonal of the facet of the cube.
On the cube these domains  do not intersect. 
If $G_i$ does not contain any vertex of the cube
then $G_i$ is locally isometric to a unite sphere and
 $G_i$ form a lune on a unite sphere. 
 Hence the length of $\sigma_i$ is equal to $\pi$. 
Since $\sigma_i$ is part of the diagonal of the cube's facet, 
then from (\ref{diagonal}) follows the length of $\sigma_i$ is less than $\pi$.
We get a contradiction.

Therefore each $G_i$ should have at least one cube's vertex. 
If we are in the Case 1 and there are three domains $G_i$ around $A_4$,
then each domain should have exactly one vertex $A_{s_i}$ of the cube.
Moreover, geodesic should intersect all edges coming out of $A_{s_i}$.
Otherwise $D_1$ has more then four vertices, 
that contradicts to the assumption. 
Since domains $G_i$ do not intersect, then all $A_{s_i}$ are different 
cube's vertices.

If we are in the Case 2, then there are only two domains $G_1$ and $G_2$.
Since $D_1$ has at most four vertices of the cube,
then $G_1$ and $G_2$ together have up to three cube's vertices.
Hence one of the domains, for example $G_1$,
has exactly one vertex of a cube  $A_{s_1}$, and $A_{s_1}$ 
does not lie at $G_2$.
In this case geodesic also intersects all edges coming out of $A_{s_i}$.

In both cases let us apply again the same construction 
to the vertex $A_{s_1}$  inside $G_1$. 
The domain  $G_1$ is divided  
into  $l+1$  parts $G'_k$, $k=1,\dots,l$ and 
$G_1\backslash (\cup_{k=1}^{l} G'_k)$ between them
with $l$ segments  $\sigma'_1$ and $\sigma'_2$ of the diagonals.
The domain $G_1\backslash (\cup_{k=1}^{l} G'_k)$ contains
the vertex $A_{s_1}$ and 
$G'_k$ do not intersect and do not have any vertex of the cube.
Since there is no points of the geodesic on $\sigma_1$,
then $\sigma_1$ belong to the boundary 
of one of the domains $G'_k$, for example to $G'_2$.
Then the perimeter of  $G'_1$ is a subarc $\gamma'_1$ of $\gamma_1$ and 
the subarc $\sigma'_1$ of the diagonal of the cube's facet.
Since $G'_1$ does not have any vertex of the cube, then 
$G'_1$ form a lune on a unite sphere
that again leads to a contradiction. 
\end{proof}

In what following we  label the front facet of the cube as $A_1A_2A_3A_4$ and 
 the back one as $A'_1A'_2A'_3A'_4$.

\begin{lemma}\label{cube_lemma}
A domain $D_i$, $i=1,2$ contains four vertices of the cube
connected with at least three edges. 
\end{lemma}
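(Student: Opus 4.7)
My overall strategy mirrors the proof of Lemma~\ref{oct_lemma}. I would first show that each domain $D_i$ contains exactly four cube vertices, and then count the cube edges inside $D_i$. The cases where $D_1$ contains zero or one cube vertex are immediate: a vertex-free $D_1$ would be isometric to a region of a unit sphere, and its simple closed geodesic boundary would be a great circle of length $2\pi$, contradicting Lemma~\ref{length}; and a single vertex is incompatible with Lemma~\ref{edge_inside_cube}, since an edge forced to lie in $D_1$ must have its second endpoint in $D_1$ as well.

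If $D_1$ contained exactly two vertices, Lemma~\ref{edge_inside_cube} would join them by a single cube edge inside $D_1$, say $A_1A_2$. I would pick a cube symmetry $r$ -- for instance, the $180^\circ$ rotation about the axis through the centers of two opposite faces disjoint from $A_1A_2$ -- that moves $A_1A_2$ to a disjoint cube edge, so that $\sigma := r(\gamma)$ is a simple closed geodesic enclosing only two other cube vertices. Repeating the argument of the octahedron proof yields the same dichotomy: if $\gamma$ and $\sigma$ intersect, consecutive crossings bound vertex-free lunes inside single facets, each lune-arc of $\gamma$ has length $\pi$, and the total length of $\gamma$ exceeds $2\pi$, violating Lemma~\ref{length}; if $\gamma$ and $\sigma$ are disjoint, the annular region between them contains the remaining cube vertices and has strictly positive integrated curvature, contradicting Gauss--Bonnet on an annulus with geodesic boundaries. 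The three-vertex case is handled identically after a short case check of the possible configurations -- a bent path on a face, a tripod at a single vertex, or a mixed path using edges from two faces -- and the choice of an appropriate cube symmetry in each case (a face-axis rotation for a face-path, a body-diagonal rotation for a tripod). Thus each $D_i$ contains exactly four cube vertices.

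Finally, Lemma~\ref{edge_inside_cube} supplies at least one cube edge through each of the four vertices in $D_i$, giving at least two cube edges inside $D_i$. To rule out a perfect matching by two disjoint cube edges I would argue that such a pattern forces $\gamma$ to cross each of the two faces joining the matched edges in a manner leaving a vertex-free sub-disk of $D_i$ bounded by an arc of $\gamma$ and a face-diagonal segment; the spherical-lune argument combined with the diagonal length bound~(\ref{diagonal}) again contradicts Lemma~\ref{length}. Hence at least three cube edges lie inside $D_i$ and connect all four of its vertices. The main obstacle is the case analysis in the three-vertex exclusion and in the matching-exclusion step: once the correct symmetries and configurations are fixed, the lune / Gauss--Bonnet machinery from the octahedron proof applies with only notational changes.
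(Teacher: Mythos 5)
Your overall plan (use a cube symmetry to produce a second geodesic $\sigma=r(\gamma)$, then run the crossing/disjoint dichotomy with Lemma~\ref{length} and Gauss--Bonnet) is indeed the paper's strategy for excluding the small configurations, but two of your steps hide genuine gaps. First, the three-vertex case is \emph{not} ``handled identically after a short case check.'' With exactly three vertices the only possible configuration is a two-edge path lying in one facet (a ``tripod'' already needs four vertices, and any two adjacent cube edges span a common facet), and in this configuration the paper must confront a degenerate sub-case your sketch cannot see: after applying the isometry $r_1\circ r_2$, the image $\sigma$ may \emph{touch} $\gamma$ at the midpoints of the edges $A_2A'_2$ and $A_4A'_4$ (the case $\dist(A'_2,X_2)=\dist(A_2,X_2)$), and then no vertex-free bigon of perimeter arcs of length $\pi$ is produced, so neither the length bound nor Gauss--Bonnet applies directly. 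The paper resolves this by an explicit spherical-trigonometric computation: the would-be lune forces the arc $X_2MX_5$ through the midpoint $M$ of $A_1A'_1$ to have length $\pi$, while its actual length is $2h$ with $h$ given by (\ref{h}), and $2h=\pi$ only at $\alpha=2\pi/3$. This ingredient has no analogue in the octahedron proof, so ``only notational changes'' is not accurate. Relatedly, your assertion that in the crossing case ``consecutive crossings bound vertex-free lunes inside single facets'' is exactly what must be proved, not assumed: in general a bigon between two crossing geodesics can contain vertices (Gauss--Bonnet then gives no contradiction), and the paper obtains the needed structure only after first showing, via Lemma~\ref{uniqness}, that $\gamma$ is invariant under the symmetry of its own configuration, which pins down where the intersections with $\sigma$ can lie.

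Second, your exclusion of the ``perfect matching'' case (four vertices, two disjoint edges) is not substantiated and does not match what actually works. You claim $\gamma$ leaves a vertex-free sub-disk bounded by one arc of $\gamma$ and one face-diagonal segment; in this configuration the natural region cut off by the arc of $\gamma$ that encircles the edge $A_1A'_1$ is bounded by \emph{two} diagonal segments $A'_1Z_1$ and $A'_1Z_2$ meeting at the vertex $A'_1$, so it is not a lune and the $\pi$-length argument does not apply, and you give no construction of an arc of $\gamma$ returning to the same diagonal. The paper argues differently: it unfolds the three facets crossed by this arc onto the sphere and observes that the diagonals $A'_1A_2$ and $A'_1A_4$ make the angle $\angle Z_1A'_1Z_2=2\alpha>\pi$ in the development, so a great-circle arc from $Z_1$ to $Z_2$ cannot stay inside the development unless $\dist(A'_1,Z_i)=\pi$, which (\ref{diagonal}) forbids. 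To complete your proof you would need either to reproduce this development/angle argument or to supply an actual construction of the vertex-free lune you invoke.
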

\begin{proof}
By Lemma~\ref{edge_inside_cube} the domain $D_i$ contains the vertex of a cube 
together with at least one edge coming out of this vertex. 

\textbf{Case 1.}
Assume first that the domain $D_1$ has only one edge, for instance $A_1A_4$.
Then geodesic intersects the edges $A_1A'_1$, $A_4A'_4$, $A_4A_3$ and $A_1A_2$
at the points $X_1$, $X_2$, $X_3$ and $X_4$ respectively.

Rotation $r_0$ on the angle $\pi$ along the line passing through the
midpoints of the edges $A_1A_4$ and $A'_2A'_3$ maps $\gamma$ into 
a geodesic $\Tilde{\gamma}$. 
Since the edge $A_1A_4$ maps into itself,
and $r_0(A_1A'_1)=A_3A_4$, $r_0(A_1A_2) = A_4A'_4$,
then $\Tilde{\gamma}$ is equivalent to $\gamma$.
By Lemma~\ref{uniqness} $\gamma$ and $\Tilde{\gamma}$ coincide. 
Hence $\dist(A_1, X_4)=\dist(A_4, X_2)$ and $\dist(A_1, X_1)=\dist(A_4, X_3)$.

 \begin{minipage}{0.4\linewidth}
    \begin{figure}[H]
        \begin{tikzpicture}[line cap=round,line join=round,x=0.7cm,y=0.7cm]
\clip(-1,-0.7) rectangle (10,7);

\draw [shift={(5,2)},line width=0.5pt]  plot[domain=2.677945044588987:3.6052402625905993,variable=\t]({1*4.47213595499958*cos(\t r)+0*4.47213595499958*sin(\t r)},{0*4.47213595499958*cos(\t r)+1*4.47213595499958*sin(\t r)});
\draw [shift={(-2,2)},line width=0.5pt]  plot[domain=-0.27829965900511144:0.27829965900511133,variable=\t]({1*7.280109889280519*cos(\t r)+0*7.280109889280519*sin(\t r)},{0*7.280109889280519*cos(\t r)+1*7.280109889280519*sin(\t r)});
\draw [shift={(3,-4)},line width=0.5pt]  plot[domain=1.3258176636680326:1.8157749899217608,variable=\t]({1*8.246211251235321*cos(\t r)+0*8.246211251235321*sin(\t r)},{0*8.246211251235321*cos(\t r)+1*8.246211251235321*sin(\t r)});
\draw [shift={(3,8)},line width=0.5pt]  plot[domain=4.4674103172578254:4.957367643511554,variable=\t]({1*8.246211251235321*cos(\t r)+0*8.246211251235321*sin(\t r)},{0*8.246211251235321*cos(\t r)+1*8.246211251235321*sin(\t r)});
\draw [shift={(4,3)},line width=0.5pt]  plot[domain=1.892546881191539:2.819842099193151,variable=\t]({1*3.1622776601683795*cos(\t r)+0*3.1622776601683795*sin(\t r)},{0*3.1622776601683795*cos(\t r)+1*3.1622776601683795*sin(\t r)});
\draw [shift={(9.130809898184463,1.8950343697549739)},line width=0.5pt]  plot[domain=2.049591997369353:2.6703128472203277,variable=\t]({1*4.6250506643380485*cos(\t r)+0*4.6250506643380485*sin(\t r)},{0*4.6250506643380485*cos(\t r)+1*4.6250506643380485*sin(\t r)});
\draw [shift={(3,4)},line width=0.5pt]  plot[domain=5.176036589385496:5.81953769817878,variable=\t]({1*4.47213595499958*cos(\t r)+0*4.47213595499958*sin(\t r)},{0*4.47213595499958*cos(\t r)+1*4.47213595499958*sin(\t r)});
\draw [shift={(3,4)},line width=0.5pt]  plot[domain=-0.46364760900080615:0.4636476090008061,variable=\t]({1*4.47213595499958*cos(\t r)+0*4.47213595499958*sin(\t r)},{0*4.47213595499958*cos(\t r)+1*4.47213595499958*sin(\t r)});
\draw [shift={(4.289620338326439,-1.2517388441527422)},line width=0.5pt,dotted]  plot[domain=1.9483631122020275:2.7779986343945624,variable=\t]({1*3.498131805349394*cos(\t r)+0*3.498131805349394*sin(\t r)},{0*3.498131805349394*cos(\t r)+1*3.498131805349394*sin(\t r)});
\draw [shift={(5,-4)},line width=0.5pt,dotted]  plot[domain=1.2490457723982544:1.892546881191539,variable=\t]({1*6.324555320336759*cos(\t r)+0*6.324555320336759*sin(\t r)},{0*6.324555320336759*cos(\t r)+1*6.324555320336759*sin(\t r)});
\draw [shift={(8,4)},line width=0.5pt,dotted]  plot[domain=2.761086276477428:3.522099030702158,variable=\t]({1*5.385164807134505*cos(\t r)+0*5.385164807134505*sin(\t r)},{0*5.385164807134505*cos(\t r)+1*5.385164807134505*sin(\t r)});
\draw [shift={(4.98,0.87)},line width=0.5pt]  plot[domain=1.1956789173869815:1.9391452482038598,variable=\t]({1*5.513374647164838*cos(\t r)+0*5.513374647164838*sin(\t r)},{0*5.513374647164838*cos(\t r)+1*5.513374647164838*sin(\t r)});

\draw [shift={(6.0960685469709635,1.723202058592476)},line width=0.5pt]  plot[domain=1.820543495446873:2.183527339436593,variable=\t]({1*2.9509046702148245*cos(\t r)+0*2.9509046702148245*sin(\t r)},{0*2.9509046702148245*cos(\t r)+1*2.9509046702148245*sin(\t r)});
\draw [shift={(0.258266793241362,2.187572652639149)},line width=0.5pt]  plot[domain=-0.3277963434058764:0.4383994466587939,variable=\t]({1*5.651871847040474*cos(\t r)+0*5.651871847040474*sin(\t r)},{0*5.651871847040474*cos(\t r)+1*5.651871847040474*sin(\t r)});
\draw [shift={(13.426490067279156,1.9056333633965261)},line width=0.5pt]  plot[domain=2.9002883701243602:3.3626821339243267,variable=\t]({1*9.288115293779883*cos(\t r)+0*9.288115293779883*sin(\t r)},{0*9.288115293779883*cos(\t r)+1*9.288115293779883*sin(\t r)});
\draw [shift={(-0.28902712117078816,13.050527620516677)},line width=0.5pt,dotted]  plot[domain=5.050061747528929:5.147699150766648,variable=\t]({1*13.976203033001996*cos(\t r)+0*13.976203033001996*sin(\t r)},{0*13.976203033001996*cos(\t r)+1*13.976203033001996*sin(\t r)});

\draw [shift={(8.600352822008377,3.7465310755101253)},line width=0.5pt,dotted]  plot[domain=2.8591918555817566:3.467234994493685,variable=\t]({1*6.665714654387071*cos(\t r)+0*6.665714654387071*sin(\t r)},{0*6.665714654387071*cos(\t r)+1*6.665714654387071*sin(\t r)});
\draw [shift={(12.116301605504615,4.3269943180684685)},line width=0.5pt,dotted]  plot[domain=2.916794720284866:3.3907839243522626,variable=\t]({1*8.592005155909213*cos(\t r)+0*8.592005155909213*sin(\t r)},{0*8.592005155909213*cos(\t r)+1*8.592005155909213*sin(\t r)});
\draw [shift={(0.39094410582612565,9.302393539997103)},line width=0.5pt,dotted]  plot[domain=4.95357780822651:5.15836436709901,variable=\t]({1*7.9184313694159085*cos(\t r)+0*7.9184313694159085*sin(\t r)},{0*7.9184313694159085*cos(\t r)+1*7.9184313694159085*sin(\t r)});
\draw [shift={(-0.8694903637291297,15.438719247042423)},line width=0.5pt]  plot[domain=5.014793679777069:5.177056078032471,variable=\t]({1*10.302187213244611*cos(\t r)+0*10.302187213244611*sin(\t r)},{0*10.302187213244611*cos(\t r)+1*10.302187213244611*sin(\t r)});

\begin{scriptsize}
\draw [fill=black] (5,4) circle (1.5pt);
\draw[color=black] (5.3,4.0) node {$A_1$};
\draw [fill=black] (1,4) circle (1.5pt);
\draw[color=black] (0.65,4.1) node {$A_2$};
\draw [fill=black] (1,0) circle (1.5pt);
\draw[color=black] (0.8,-0.2) node {$A_3$};
\draw [fill=black] (5,0) circle (1.5pt);
\draw[color=black] (5.1,-0.3) node {$A_4$};

\draw [fill=black] (7,6) circle (1.5pt);
\draw[color=black] (7.35,6.1) node {$A'_1$};
\draw [fill=black] (3,6) circle (1.5pt);
\draw[color=black] (2.9,6.35) node {$A'_2$};
\draw [fill=black] (3,2) circle (1.5pt);
\draw[color=black] (2.6,2.25) node {$A'_3$};
\draw [fill=black] (7,2) circle (1.5pt);
\draw[color=black] (7.35,2.0) node {$A'_4$};

\draw [fill=black] (5.366725958851461,4.582554740905522) circle (1.5pt);
\draw[color=black] (5.2,4.9) node {$X_1$};
\draw [fill=black] (5.609199781045852,0.36791017421233363) circle (1.5pt);
\draw[color=black] (5.9,0.2) node {$X_2$};
\draw [fill=black] (4.34117966937726,-0.13641426516921484) circle (1.5pt);
\draw[color=black] (4.3,-0.4) node {$X_3$};
\draw [fill=black] (4.407478249930408,4.125207995859112) circle (1.5pt);
\draw[color=black] (4.3,4.5) node {$X_4$};

\draw [fill=black] (2.198672489793531,5.604013451696816) circle (1.5pt);
\draw [fill=black] (3.7404784346948423,6.242232896025958) circle (1.5pt);
\draw [fill=black] (2.282318544629908,1.613163534047901) circle (1.5pt);
\draw [fill=black] (3.7835158436577667,2.2064616568032003) circle (1.5pt);
\end{scriptsize}
\end{tikzpicture}
        \caption{\;}
        \label{cube_geod_1_edge}
    \end{figure}
 \end{minipage}
      \hspace{0.05\linewidth}
 \begin{minipage}{0.4\linewidth}
     \begin{figure}[H]
        \begin{tikzpicture}[line cap=round,line join=round,x=0.7cm,y=0.7cm]
\clip(0,-1.5) rectangle (10,7.5);

\draw [shift={(5,2)},line width=0.5pt]  plot[domain=2.677945044588987:3.6052402625905993,variable=\t]({1*4.47213595499958*cos(\t r)+0*4.47213595499958*sin(\t r)},{0*4.47213595499958*cos(\t r)+1*4.47213595499958*sin(\t r)});
\draw [shift={(-2,2)},line width=0.5pt]  plot[domain=-0.27829965900511144:0.27829965900511133,variable=\t]({1*7.280109889280519*cos(\t r)+0*7.280109889280519*sin(\t r)},{0*7.280109889280519*cos(\t r)+1*7.280109889280519*sin(\t r)});
\draw [shift={(3,-4)},line width=0.5pt]  plot[domain=1.3258176636680326:1.8157749899217608,variable=\t]({1*8.246211251235321*cos(\t r)+0*8.246211251235321*sin(\t r)},{0*8.246211251235321*cos(\t r)+1*8.246211251235321*sin(\t r)});
\draw [shift={(3,8)},line width=0.5pt]  plot[domain=4.4674103172578254:4.957367643511554,variable=\t]({1*8.246211251235321*cos(\t r)+0*8.246211251235321*sin(\t r)},{0*8.246211251235321*cos(\t r)+1*8.246211251235321*sin(\t r)});
\draw [shift={(4,3)},line width=0.5pt]  plot[domain=1.892546881191539:2.819842099193151,variable=\t]({1*3.1622776601683795*cos(\t r)+0*3.1622776601683795*sin(\t r)},{0*3.1622776601683795*cos(\t r)+1*3.1622776601683795*sin(\t r)});
\draw [shift={(9.130809898184463,1.8950343697549739)},line width=0.5pt]  plot[domain=2.049591997369353:2.6703128472203277,variable=\t]({1*4.6250506643380485*cos(\t r)+0*4.6250506643380485*sin(\t r)},{0*4.6250506643380485*cos(\t r)+1*4.6250506643380485*sin(\t r)});
\draw [shift={(3,4)},line width=0.5pt]  plot[domain=5.176036589385496:5.81953769817878,variable=\t]({1*4.47213595499958*cos(\t r)+0*4.47213595499958*sin(\t r)},{0*4.47213595499958*cos(\t r)+1*4.47213595499958*sin(\t r)});
\draw [shift={(3,4)},line width=0.5pt]  plot[domain=-0.46364760900080615:0.4636476090008061,variable=\t]({1*4.47213595499958*cos(\t r)+0*4.47213595499958*sin(\t r)},{0*4.47213595499958*cos(\t r)+1*4.47213595499958*sin(\t r)});
\draw [shift={(4.289620338326439,-1.2517388441527422)},line width=0.5pt,dash pattern=on 1pt off 1pt]  plot[domain=1.9483631122020275:2.7779986343945624,variable=\t]({1*3.498131805349394*cos(\t r)+0*3.498131805349394*sin(\t r)},{0*3.498131805349394*cos(\t r)+1*3.498131805349394*sin(\t r)});
\draw [shift={(5,-4)},line width=0.5pt,dash pattern=on 1pt off 1pt]  plot[domain=1.2490457723982544:1.892546881191539,variable=\t]({1*6.324555320336759*cos(\t r)+0*6.324555320336759*sin(\t r)},{0*6.324555320336759*cos(\t r)+1*6.324555320336759*sin(\t r)});
\draw [shift={(8,4)},line width=0.5pt,dash pattern=on 1pt off 1pt]  plot[domain=2.761086276477428:3.522099030702158,variable=\t]({1*5.385164807134505*cos(\t r)+0*5.385164807134505*sin(\t r)},{0*5.385164807134505*cos(\t r)+1*5.385164807134505*sin(\t r)});
\draw [shift={(4.98,0.87)},line width=0.5pt]  plot[domain=1.1956789173869815:1.9391452482038598,variable=\t]({1*5.513374647164838*cos(\t r)+0*5.513374647164838*sin(\t r)},{0*5.513374647164838*cos(\t r)+1*5.513374647164838*sin(\t r)});

\draw [shift={(3.2626301530298987,-3.7699363622007547)},line width=0.5pt]  plot[domain=1.3243310373201933:1.8031497755867771,variable=\t]({1*8.6065568953819*cos(\t r)+0*8.6065568953819*sin(\t r)},{0*8.6065568953819*cos(\t r)+1*8.6065568953819*sin(\t r)});
\draw [shift={(-0.9940692726172548,2.2031741221750742)},line width=0.5pt]  plot[domain=-0.27968857107729495:0.3573456838281194,variable=\t]({1*6.805990593324959*cos(\t r)+0*6.805990593324959*sin(\t r)},{0*6.805990593324959*cos(\t r)+1*6.805990593324959*sin(\t r)});
\draw [shift={(14.470794366528253,0.4695989528591009)},line width=0.5pt,dash pattern=on 1pt off 1pt]  plot[domain=2.8373400416768675:3.100582711379334,variable=\t]({1*13.826538046209452*cos(\t r)+0*13.826538046209452*sin(\t r)},{0*13.826538046209452*cos(\t r)+1*13.826538046209452*sin(\t r)});
\draw [shift={(2.0954706330943873,10.338962540549048)},line width=0.5pt,dash pattern=on 1pt off 1pt]  plot[domain=4.7397674520491515:5.044328926041472,variable=\t]({1*10.566147411201364*cos(\t r)+0*10.566147411201364*sin(\t r)},{0*10.566147411201364*cos(\t r)+1*10.566147411201364*sin(\t r)});
\draw [shift={(3.4857635906646283,3.078543762126704)},line width=0.5pt]  plot[domain=3.762587576332066:4.390514534116842,variable=\t]({1*3.5080266211266506*cos(\t r)+0*3.5080266211266506*sin(\t r)},{0*3.5080266211266506*cos(\t r)+1*3.5080266211266506*sin(\t r)});

\draw [shift={(8.068581117470234,3.8337646279673265)},line width=0.5pt,dotted]  plot[domain=2.848607700610918:3.5039261709082856,variable=\t]({1*6.125462656024561*cos(\t r)+0*6.125462656024561*sin(\t r)},{0*6.125462656024561*cos(\t r)+1*6.125462656024561*sin(\t r)});
\draw [shift={(4.97904121175859,10.201649655850753)},line width=0.5pt,dotted]  plot[domain=4.41582676347546:4.902618646642531,variable=\t]({1*8.915923125029972*cos(\t r)+0*8.915923125029972*sin(\t r)},{0*8.915923125029972*cos(\t r)+1*8.915923125029972*sin(\t r)});
\draw [shift={(-2.229885234901915,5.3956986914104315)},line width=0.5pt]  plot[domain=5.864709046513695:6.172136634165562,variable=\t]({1*9.73637770798381*cos(\t r)+0*9.73637770798381*sin(\t r)},{0*9.73637770798381*cos(\t r)+1*9.73637770798381*sin(\t r)});
\draw [shift={(5.888739072884796,-2.8087461693126903)},line width=0.5pt]  plot[domain=1.5967833403254617:1.983656667654308,variable=\t]({1*9.15424304828646*cos(\t r)+0*9.15424304828646*sin(\t r)},{0*9.15424304828646*cos(\t r)+1*9.15424304828646*sin(\t r)});
\draw [shift={(3.5715841436010627,2.5979486656826722)},line width=0.5pt,dotted]  plot[domain=0.41575323842441336:1.0638945343409396,variable=\t]({1*4.25162369782233*cos(\t r)+0*4.25162369782233*sin(\t r)},{0*4.25162369782233*cos(\t r)+1*4.25162369782233*sin(\t r)});

\begin{scriptsize}
\draw [fill=black] (1,0) circle (1.5pt);
\draw[color=black] (0.7,-0.14) node {$A_3$};
\draw [fill=black] (5,0) circle (1.5pt);
\draw[color=black] (5.2,-0.3) node {$A_4$};
\draw [fill=black] (1,4) circle (1.5pt);
\draw[color=black] (0.5,4.1) node {$A_2$};
\draw [fill=black] (5,4) circle (1.5pt);
\draw[color=black] (5.3,4.07) node {$A_1$};

\draw [fill=black] (3,6) circle (1.5pt);
\draw[color=black] (3.1,6.3) node {$A'_2$};
\draw [fill=black] (7,6) circle (1.5pt);
\draw[color=black] (7.3,6.2) node {$A'_1$};
\draw [fill=black] (7,2) circle (1.5pt);
\draw[color=black] (7.4,2) node {$A'_4$};
\draw [fill=black] (3,2) circle (1.5pt);
\draw[color=black] (3.2,1.8) node {$A'_3$};

\draw [fill=black] (5.547450441530492,0.32433730492770785) circle (1.5pt);
\draw[color=black] (6.1,0.3) node {$X_5$};
\draw [fill=black] (5.362437213325659,4.576537857670639) circle (1.5pt);
\draw[color=black] (5.7,4.6) node {$X_1$};
\draw [fill=black] (1.2792937037686034,4.611755952260403) circle (1.5pt);
\draw[color=black] (0.8,4.8) node {$X_2$};
\draw [fill=black] (0.6326856489941486,1.0374173492637961) circle (1.5pt);
\draw[color=black] (0.2,1) node {$X_3$};
\draw [fill=black] (2.3847194615437193,-0.22322502787057097) circle (1.5pt);
\draw[color=black] (2.3,-0.6) node {$X_4$};

\draw [fill=black] (2.2041485969838095,5.60286721487762) circle (1.5pt);
\draw[color=black] (2,6) node {$Y_2$};
\draw [fill=black] (2.3735033073376064,1.6749353179554267) circle (1.5pt);
\draw[color=black] (2.45,1.3) node {$Y_1$};
\draw [fill=black] (6.666332658536589,1.4391398247955767) circle (1.5pt);
\draw[color=black] (7.1,1.35) node {$Y_5$};
\draw [fill=black] (7.461022064221398,4.315090689373487) circle (1.5pt);
\draw[color=black] (7.9,4.4) node {$Y_4$};
\draw [fill=black] (5.650874409690287,6.342406008916253) circle (1.5pt);
\draw[color=black] (5.837246741122936,6.65) node {$Y_3$};
\end{scriptsize}
\end{tikzpicture}
 
        \caption{\;}
        \label{cube_geod_2_edge}
    \end{figure}
 \end{minipage}

Apply an isometry $r$ of a cube such that $r(A_1A_4)=A'_2A'_3$.
The action $r$ is a rotation on the angle $\pi$ 
along the line passing through the centers of the facets 
$A'_1A_1A_2A'_2$ and $A'_4A_4A_3A'_3$.
The image of a geodesic $\gamma$ is a geodesic $\sigma$ that enclose a domain
containing the edge $A'_2A'_3$ (see Figure~\ref{cube_geod_1_edge}).
One can easily check that $\sigma$ is also fixed by $r_0$ action.

If $\gamma$ and $\sigma$ intersect, then they intersect inside the facets 
$A'_1A_1A_2A'_2$ and $A'_4A_4A_3A'_3$.
But then it would mean that the length of $\gamma$ is greater $2\pi$ 
that contradicts  Lemma~\ref{length}.
Thus $\gamma$ and $\sigma$ do not intersect. 
In this case from the Gauss-Bonnet theorem follows that the integral of the curvature 
over the area of a  domain enclosed by $\gamma$ and $\sigma$ equals zero.
This leads to a contradiction.

\textbf{Case 2.} Assume now the domain $D_1$ contains three vertices of a cube.
Since each vertex lies in the domain with at least one edge
coming out of it, then  this vertices belong to the boundary of some facet 
and connected by two edges. 
For example $D_1$ contains $A_1A_2$ and $A_1A_4$. 
Denote with $X_1, \dots, X_5$ the vertices of the geodesic $\gamma$ on 
the edges $A_1A'_1$, $A_2A'_2$, $A_2A_3$, $A_3A_4$ and $A_4A'_4$ respectively. 

A reflection $s$ through the plane $A_1A'_1A'_3A_3$ maps the geodesic~$\gamma$ 
into a geodesic $\Tilde{\gamma}$. 
Since $s$ fixes the edges $A_1A'_1$ and $A_3A'_3$ and $s(A_2A'_2)=A_4A'_4$,
then $\gamma$ and $\Tilde{\gamma}$ are equivalent geodesics
and hence by Lemma~\ref{uniqness} they coincide. 
It follows 
\begin{equation}\label{distance}
\dist(A_4,X_5) = \dist(A_2,X_2)
\end{equation}
and 
$\angle(X_5X_1A_1)=\angle(X_2X_1A_1)$. 
Since $\gamma$ is a geodesic line, then  $\gamma$ intersects an edge $A_1A'_1$
orthogonal.

Consider two isometries of the cube, namely,
the rotation $r_1$  on an angle $\pi$ along the 
line passing through the midpoints of the edges $A_1A'_1$ and $A_3A'_3$
and the rotation $r_2$ on the angle $\pi$ along the line
passing through the centers of the facets $A_1A_2A_3A_4$ and $A'_1A'_2A'_3A'_4$.
An isometry $r_1 \circ r_2$ maps $\gamma$ into a geodesic $\sigma$ with the 
vertices $Y_k=(r_1\circ r_2)(X_k)$, $k=1, \dots, 5$ on the edges 
$A_3A'_3$, $A_2A'_2$, $A'_1A'_2$, $A'_1A'_4$ and $A_4A'_4$ respectively
(see Figure~\ref{cube_geod_2_edge}.).
If $\gamma$ and $\sigma$ do not intersect, then by the Gauss-Bonnet theorem
the integral of the curvature over the area of a  domain 
enclosed by $\gamma$ and $\sigma$ should equal zero.
This leads to a contradiction.
Hence $\gamma$ and $\sigma$ intersect.

The intersection is possible when $\dist(A'_2, X_2) \leq \dist(A_2,X_2)$. 
If $\dist(A'_2, X_2) < \dist(A_2,X_2)$, 
then $\gamma$ and $\sigma$ intersect at the points $Z_1$ and $Z_2$ 
on the facets $A_1A_4A'_4A'_1$ and $A_3A_4A'_4A'_3$ respectively. 
Thus the length of the segment $Z_1X_5Z_2$ of $\gamma$ equals $\pi$. 
From~(\ref{distance}) follows that
$\gamma$ and $\sigma$ also intersect at the points $Z_3$ and $Z_4$ 
on the facets $A_1A_2A'_2A'_1$ and $A_3A_2A'_2A'_3$ respectively. 
Then the length of the segment $Z_3X_2Z_4$ of $\gamma$ is also equal to $\pi$.
Hence the length of $\gamma$ is greater then $2\pi$ that leads to a contradiction.

 \begin{figure}[H]
 \centering
    \begin{tikzpicture}[line cap=round,line join=round,x=2cm,y=2cm]

\clip(2,-1.2) rectangle (8,1.5);

\draw[line width=0.5pt,fill=black,fill opacity=0.1] (5,-0.1159352657910958) -- (5.115935265791096,-0.11593526579109582) -- (5.115935265791096,0) -- (5,0) -- cycle; 

\draw[line width=0.5pt,fill=black,fill opacity=0.1] (5,-0.5969365940220236) -- (5.115935265791096,-0.5969365940220237) -- (5.115935265791096,-0.48100132823092784) -- (5,-0.48100132823092784) -- cycle; 

\draw [shift={(7.121666574555207,0)},line width=0.8pt]  plot[domain=2.9035715551167094:3.379613752062877,variable=\t]({1*4.241242194429087*cos(\t r)+0*4.241242194429087*sin(\t r)},{0*4.241242194429087*cos(\t r)+1*4.241242194429087*sin(\t r)});
\draw [shift={(4,-3)},line width=0.8pt]  plot[domain=1.3258176636680326:1.8157749899217608,variable=\t]({1*4.123105625617661*cos(\t r)+0*4.123105625617661*sin(\t r)},{0*4.123105625617661*cos(\t r)+1*4.123105625617661*sin(\t r)});
\draw [shift={(6,-3)},line width=0.8pt]  plot[domain=1.3258176636680326:1.8157749899217608,variable=\t]({1*4.123105625617661*cos(\t r)+0*4.123105625617661*sin(\t r)},{0*4.123105625617661*cos(\t r)+1*4.123105625617661*sin(\t r)});
\draw [shift={(4,3)},line width=0.8pt]  plot[domain=4.4674103172578254:4.957367643511554,variable=\t]({1*4.123105625617661*cos(\t r)+0*4.123105625617661*sin(\t r)},{0*4.123105625617661*cos(\t r)+1*4.123105625617661*sin(\t r)});
\draw [shift={(6,3)},line width=0.8pt]  plot[domain=4.4674103172578254:4.957367643511554,variable=\t]({1*4.123105625617661*cos(\t r)+0*4.123105625617661*sin(\t r)},{0*4.123105625617661*cos(\t r)+1*4.123105625617661*sin(\t r)});

\draw [shift={(3,0)},line width=0.8pt]  plot[domain=-0.24497866312686423:0.24497866312686414,variable=\t]({1*4.123105625617661*cos(\t r)+0*4.123105625617661*sin(\t r)},{0*4.123105625617661*cos(\t r)+1*4.123105625617661*sin(\t r)});
\draw [shift={(5,4.5)},line width=0.8pt]  plot[domain=4.2728407396702295:5.152962592611362,variable=\t]({1*4.981001328230928*cos(\t r)+0*4.981001328230928*sin(\t r)},{0*4.981001328230928*cos(\t r)+1*4.981001328230928*sin(\t r)});

\draw [line width=0.8pt] (5,1)-- (5,-1);
\draw [line width=0.8pt] (2.880431059684847,-0.00752723109795116)-- (5,0);
\draw [line width=0.8pt] (5,0)-- (7.121666574555207,0);

\begin{scriptsize}
\draw [fill=black] (5,-1) circle (1.5pt);
\draw[color=black] (5.1,-0.9) node {$A_1$};
\draw [fill=black] (5,1) circle (1.5pt);
\draw[color=black] (5.05,1.15) node {$A'_1$};
\draw [fill=black] (3,-1) circle (1.5pt);
\draw[color=black] (3.1,-0.9) node {$A_2$};
\draw [fill=black] (3,1) circle (1.5pt);
\draw[color=black] (3.1,1.15) node {$A'_2$};
\draw [fill=black] (7,-1) circle (1.5pt);
\draw[color=black] (7.15,-0.95) node {$A_4$};
\draw [fill=black] (7,1) circle (1.5pt);
\draw[color=black] (7.1,1.1) node {$A'_4$};

\draw [fill=black] (5,0) circle (1.5pt);
\draw[color=black] (5.1,0.1) node {$M$};

\draw [fill=black] (2.880431059684847,-0.00752723109795116) circle (1.5pt);
\draw[color=black] (3.0,0.1) node {$X_2$};
\draw [fill=black] (5,-0.48100132823092784) circle (1.5pt);
\draw[color=black] (5.1,-0.4) node {$X_1$};
\draw [fill=black] (7.121666574555207,0) circle (1.5pt);
\draw[color=black] (7.25,0.1) node {$X_5$};
\end{scriptsize}
\end{tikzpicture}
    \caption{\;}
    \label{two_square}
\end{figure}

If $\dist(A'_4, X_5) = \dist(A_4, X_5)$, 
then from~(\ref{distance}) follows $\dist(A'_2, X_2) = \dist(A_2, X_2)$ as well.
Consider the geodesic segment $X_2MX_5$ where $M$ is a middle point of the edge $A_1A'_1$. 
The segments $X_2X_1X_5$ and $X_2MX_5$ form a geodesic lune on a sphere,
thus the length of the $X_2MX_5$ equals $\pi$.
From the other hand the length of $X_2MX_5$ equal $2h$,
where $h$ is the length of arc, connecting the midpoints of opposite edges of 
a spherical square. 
To find $h$ consider a quadrilateral $A_1A_2X_2M$. 
From the triangle $A_1A_2X_2$ using (\ref{cos_rule}) we have
$$ \cos A_1X_2 = \cos a_s \, \cos (a_s/2) + \sin a_s\, \sin (a_s/2) \, \cos \alpha. $$
From the triangle $A_1X_2M$ follows 
$$ \cos A_1X_2 = \cos  (a_s/2) \, \cos h. $$
Combining this two formulas we obtain
\begin{equation}\label{vspom}
   \cos h = \cos a_s + 2 \sin^2  (a_s/2) \, \cos \alpha.
\end{equation} 
From~\ref{a_cube} one can compute
$$ \sin  (a_s/2)  = \frac{\sqrt{- \cos \alpha}}{\sqrt{2}\sin(\alpha/2)}. $$
Implementing it into (\ref{vspom}) we have
$$ \cos h = \cot^2 (\alpha/2)-  \frac{ \cos^2 \alpha }{\sin^2(\alpha/2)}  = $$
$$   = \frac{\cos^2(\alpha/2) - \cos^2\alpha}{\sin^2(\alpha/2)} 
= \frac{\cos \alpha - \cos 2\alpha}{2 \sin^2(\alpha/2)} 
= \frac{2\sin (\alpha/2) \sin (3\alpha/2)}{2 \sin^2(\alpha/2)}$$   
Thus
\begin{equation}\label{h}
   h= \arccos \left( \frac{\sin (3\alpha/2)}{\sin (\alpha/2)} \right).
\end{equation}
From (\ref{h}) follows that $X_2MX_5$ equals $\pi$ if and only if $\alpha=2\pi/3$
that leads to a contradiction. 

\textbf{Case 3.}
Assume  the domain  $D_1$ contains four vertices of a cube
and there are only two edges inside $D_i$. 
Remind, that by Lemma~\ref{edge_inside_cube}
$D_1$ can not have a vertex  without an edge coming from it. 
Since $D_1$ has two edges and four vertices, thus this edges does not share a vertex. 
Assume, for example, $D_1$ has $A_1A'_1$ and any other edge of a cube,
that does not come from the edge $A_1$ or $A'_1$.
Since $\gamma$ bound an area, containing $A_1A'_1$, then there is a segment of
$\gamma$ that intersects edges 
$A_1A_2$, $A'_1A'_2$, $A'_1A'_4$ and $A_1A_4$ consequently at the points 
$X_1$, $X_2$, $X_3$ and $X_4$.
Note, that the segment $X_1X_2X_3X_4$ is just a part of the geodesic. 
The points $X_1$ and $X_4$ are not connected on the facet $A_1A_2A_3A_4$ 
(see Figure~\ref{cube_2edge_4vert}).

Consider the development of the facets $A_1A_2A'_2A'_1$, $A'_1A'_2A'_3A'_4$ and 
$A'_1A'_4A_4A_1$ to a  unite sphere and take the diagonals $A'_1A_2$ and $A'_1A_4$.
Then $A'_1A_2$ and $X_1X_2$ intersect at $Z_1$ and $A'_1A_4$ intersect at $Z_2$
(see Figure~\ref{cube_2edge_4vert_dev}).
The angle $\angle Z_1A'_1Z_2=2\alpha > \pi$. 
It means, that the shortest path  connecting $Z_1$ and $Z_2$ lies 
from the opposite side of the vertex $A'_1$ on a sphere,
and thus the segment $\gamma_1$ of the geodesic  $\gamma$
between the points $Z_1$ and $Z_2$ does not lie inside the development.
Unless the length of $A'_1Z_1$ and the length of $A'_1Z_2$ are equal to $\pi$
and the length of $Z_1Z_2$ is greater than $\pi$. 
From (\ref{diagonal}) follows 
the length of $Y'_1Z_k$ is less than $\pi$ for all $\alpha \in (\pi/3, 2\pi/3)$.
This leads to a contradiction.

 \begin{minipage}{0.4\linewidth}
    \begin{figure}[H]
        \begin{tikzpicture}[line cap=round,line join=round,x=0.7cm,y=0.7cm]
\clip(-1,-1) rectangle (10,7);

\draw [shift={(5,2)},line width=0.5pt]  plot[domain=2.677945044588987:3.6052402625905993,variable=\t]({1*4.47213595499958*cos(\t r)+0*4.47213595499958*sin(\t r)},{0*4.47213595499958*cos(\t r)+1*4.47213595499958*sin(\t r)});
\draw [shift={(-2,2)},line width=0.5pt]  plot[domain=-0.27829965900511144:0.27829965900511133,variable=\t]({1*7.280109889280519*cos(\t r)+0*7.280109889280519*sin(\t r)},{0*7.280109889280519*cos(\t r)+1*7.280109889280519*sin(\t r)});
\draw [shift={(3,-4)},line width=0.5pt]  plot[domain=1.3258176636680326:1.8157749899217608,variable=\t]({1*8.246211251235321*cos(\t r)+0*8.246211251235321*sin(\t r)},{0*8.246211251235321*cos(\t r)+1*8.246211251235321*sin(\t r)});
\draw [shift={(3,8)},line width=0.5pt]  plot[domain=4.4674103172578254:4.957367643511554,variable=\t]({1*8.246211251235321*cos(\t r)+0*8.246211251235321*sin(\t r)},{0*8.246211251235321*cos(\t r)+1*8.246211251235321*sin(\t r)});
\draw [shift={(4,3)},line width=0.5pt]  plot[domain=1.892546881191539:2.819842099193151,variable=\t]({1*3.1622776601683795*cos(\t r)+0*3.1622776601683795*sin(\t r)},{0*3.1622776601683795*cos(\t r)+1*3.1622776601683795*sin(\t r)});
\draw [shift={(9.130809898184463,1.8950343697549739)},line width=0.5pt]  plot[domain=2.049591997369353:2.6703128472203277,variable=\t]({1*4.6250506643380485*cos(\t r)+0*4.6250506643380485*sin(\t r)},{0*4.6250506643380485*cos(\t r)+1*4.6250506643380485*sin(\t r)});
\draw [shift={(3,4)},line width=0.5pt]  plot[domain=5.176036589385496:5.81953769817878,variable=\t]({1*4.47213595499958*cos(\t r)+0*4.47213595499958*sin(\t r)},{0*4.47213595499958*cos(\t r)+1*4.47213595499958*sin(\t r)});
\draw [shift={(3,4)},line width=0.5pt]  plot[domain=-0.46364760900080615:0.4636476090008061,variable=\t]({1*4.47213595499958*cos(\t r)+0*4.47213595499958*sin(\t r)},{0*4.47213595499958*cos(\t r)+1*4.47213595499958*sin(\t r)});
\draw [shift={(4.289620338326439,-1.2517388441527422)},line width=0.5pt,dotted]  plot[domain=1.9483631122020275:2.7779986343945624,variable=\t]({1*3.498131805349394*cos(\t r)+0*3.498131805349394*sin(\t r)},{0*3.498131805349394*cos(\t r)+1*3.498131805349394*sin(\t r)});
\draw [shift={(5,-4)},line width=0.5pt,dotted]  plot[domain=1.2490457723982544:1.892546881191539,variable=\t]({1*6.324555320336759*cos(\t r)+0*6.324555320336759*sin(\t r)},{0*6.324555320336759*cos(\t r)+1*6.324555320336759*sin(\t r)});
\draw [shift={(8,4)},line width=0.5pt,dotted]  plot[domain=2.761086276477428:3.522099030702158,variable=\t]({1*5.385164807134505*cos(\t r)+0*5.385164807134505*sin(\t r)},{0*5.385164807134505*cos(\t r)+1*5.385164807134505*sin(\t r)});
\draw [shift={(4.98,0.87)},line width=0.5pt]  plot[domain=1.1956789173869815:1.9391452482038598,variable=\t]({1*5.513374647164838*cos(\t r)+0*5.513374647164838*sin(\t r)},{0*5.513374647164838*cos(\t r)+1*5.513374647164838*sin(\t r)});

\draw [shift={(7.411698614654287,3.020963674114645)},line width=0.5pt]  plot[domain=2.022584869200487:2.816045995279755,variable=\t]({1*3.667453393212879*cos(\t r)+0*3.667453393212879*sin(\t r)},{0*3.667453393212879*cos(\t r)+1*3.667453393212879*sin(\t r)});
\draw [shift={(2.6527432093403887,7.900568653084396)},line width=0.5pt]  plot[domain=5.2005195474737675:5.752146943224357,variable=\t]({1*5.419547115805906*cos(\t r)+0*5.419547115805906*sin(\t r)},{0*5.419547115805906*cos(\t r)+1*5.419547115805906*sin(\t r)});
\draw [shift={(4.46248681417807,3.0209636741146455)},line width=0.5pt]  plot[domain=0.6427294951801987:1.1829187874873526,variable=\t]({1*3.5686804021821508*cos(\t r)+0*3.5686804021821508*sin(\t r)},{0*3.5686804021821508*cos(\t r)+1*3.5686804021821508*sin(\t r)});

\begin{scriptsize}
\draw [fill=black] (5,4) circle (1.5pt);
\draw[color=black] (5.3,4.0) node {$A_1$};
\draw [fill=black] (1,4) circle (1.5pt);
\draw[color=black] (0.75,4.3) node {$A_2$};
\draw [fill=black] (1,0) circle (1.5pt);
\draw[color=black] (0.8,-0.2) node {$A_3$};
\draw [fill=black] (5,0) circle (1.5pt);
\draw[color=black] (5.1,-0.2) node {$A_4$};

\draw [fill=black] (7,6) circle (1.5pt);
\draw[color=black] (7.197210483710562,6.2717994087304785) node {$A'_1$};
\draw [fill=black] (3,6) circle (1.5pt);
\draw[color=black] (2.974475405755977,6.432665506938273) node {$A'_2$};
\draw [fill=black] (3,2) circle (1.5pt);
\draw[color=black] (2.6,2.15) node {$A'_3$};
\draw [fill=black] (7,2) circle (1.5pt);
\draw[color=black] (7.35,2.0) node {$A'_4$};

\draw [fill=black] (3.9417892392320977,4.192254453376471) circle (1.5pt);
\draw[color=black] (3.65,4.55) node {$X_1$};
\draw [fill=black] (5.810578956206633,6.320453063508272) circle (1.5pt);
\draw[color=black] (5.8,6.7) node {$X_2$};
\draw [fill=black] (7.3190824712881515,5.1599683643149215) circle (1.5pt);
\draw[color=black] (7.65,5.3) node {$X_3$};
\draw [fill=black] (5.194378595072868,3.113964375892409) circle (1.5pt);
\draw[color=black] (5.55,2.9) node {$X_4$};

\end{scriptsize}
\end{tikzpicture}
        \caption{\;}
        \label{cube_2edge_4vert}
    \end{figure}
 \end{minipage}
      \hspace{0.05\linewidth}
 \begin{minipage}{0.4\linewidth}
     \begin{figure}[H]
        \begin{tikzpicture}[line cap=round,line join=round,x=0.6cm,y=0.6cm]
\clip(-1,-2.5) rectangle (12,9);

\draw [shift={(4,-5)},line width=0.5pt]  plot[domain=1.3258176636680326:1.8157749899217608,variable=\t]({1*8.246211251235321*cos(\t r)+0*8.246211251235321*sin(\t r)},{0*8.246211251235321*cos(\t r)+1*8.246211251235321*sin(\t r)});
\draw [shift={(4,7)},line width=0.5pt]  plot[domain=4.4674103172578254:4.957367643511554,variable=\t]({1*8.246211251235321*cos(\t r)+0*8.246211251235321*sin(\t r)},{0*8.246211251235321*cos(\t r)+1*8.246211251235321*sin(\t r)});
\draw [shift={(8,7)},line width=0.5pt]  plot[domain=4.4674103172578254:4.957367643511554,variable=\t]({1*8.246211251235321*cos(\t r)+0*8.246211251235321*sin(\t r)},{0*8.246211251235321*cos(\t r)+1*8.246211251235321*sin(\t r)});
\draw [line width=0.5pt] (6,3)-- (6,-1);
\draw [shift={(2,1)},line width=0.5pt]  plot[domain=-0.24497866312686423:0.24497866312686414,variable=\t]({1*8.246211251235321*cos(\t r)+0*8.246211251235321*sin(\t r)},{0*8.246211251235321*cos(\t r)+1*8.246211251235321*sin(\t r)});

\draw [line width=0.5pt] (6,3)-- (10,3);

\draw [shift={(2,5)},line width=0.5pt]  plot[domain=-0.24497866312686423:0.24497866312686414,variable=\t]({1*8.246211251235321*cos(\t r)+0*8.246211251235321*sin(\t r)},{0*8.246211251235321*cos(\t r)+1*8.246211251235321*sin(\t r)});
\draw [shift={(13,5)},line width=0.5pt]  plot[domain=2.8632929945846817:3.4198923125949046,variable=\t]({1*7.280109889280519*cos(\t r)+0*7.280109889280519*sin(\t r)},{0*7.280109889280519*cos(\t r)+1*7.280109889280519*sin(\t r)});
\draw [shift={(8,-1)},line width=0.5pt]  plot[domain=1.3258176636680326:1.8157749899217608,variable=\t]({1*8.246211251235321*cos(\t r)+0*8.246211251235321*sin(\t r)},{0*8.246211251235321*cos(\t r)+1*8.246211251235321*sin(\t r)});
\draw [shift={(10,1)},line width=0.5pt]  plot[domain=2.896613990462929:3.3865713167166573,variable=\t]({1*8.246211251235321*cos(\t r)+0*8.246211251235321*sin(\t r)},{0*8.246211251235321*cos(\t r)+1*8.246211251235321*sin(\t r)});
\draw [shift={(9,-4)},line width=0.5pt]  plot[domain=1.9756881130799802:2.7367008673047097,variable=\t]({1*7.615773105863909*cos(\t r)+0*7.615773105863909*sin(\t r)},{0*7.615773105863909*cos(\t r)+1*7.615773105863909*sin(\t r)});
\draw [shift={(14,-1)},line width=0.5pt]  plot[domain=2.0344439357957027:2.677945044588987,variable=\t]({1*8.94427190999916*cos(\t r)+0*8.94427190999916*sin(\t r)},{0*8.94427190999916*cos(\t r)+1*8.94427190999916*sin(\t r)});
\draw [shift={(2.6191859749404527,6.168606069104442)},line width=0.5pt]  plot[domain=-1.7126492143850864:0.18478361596982498,variable=\t]({1*5.917153097281364*cos(\t r)+0*5.917153097281364*sin(\t r)},{0*5.917153097281364*cos(\t r)+1*5.917153097281364*sin(\t r)});

\begin{scriptsize}
\draw [fill=black] (2,3) circle (1.5pt);
\draw[color=black] (1.7,3.45) node {$A_1$};
\draw [fill=black] (2,-1) circle (1.5pt);
\draw[color=black] (1.5,-1.0) node {$A_4$};

\draw [fill=black] (6,3) circle (1.5pt);
\draw[color=black] (5.5,3.6) node {$A'_1$};
\draw [fill=black] (6,-1) circle (1.5pt);
\draw[color=black] (5.9,-1.5) node {$A'_4$};

\draw [fill=black] (10,3) circle (1.5pt);
\draw[color=black] (10.5,3.1) node {$A'_2$};
\draw [fill=black] (10,-1) circle (1.5pt);
\draw[color=black] (10.3,-1.15) node {$A'_3$};

\draw [fill=black] (6,7) circle (1.5pt);
\draw[color=black] (5.75,7.45) node {$A_1$};
\draw [fill=black] (10,7) circle (1.5pt);
\draw[color=black] (10.2,7.37) node {$A_2$};

\draw [fill=black] (8.44349956568432,7.257262815037879) circle (1.5pt);
\draw[color=black] (8.44,7.7) node {$X_1$};
\draw [fill=black] (7.6164488613114445,3) circle (1.5pt);
\draw[color=black] (8.15,3.35) node {$X_2$};
\draw [fill=black] (6,1.3123938306796115) circle (1.5pt);
\draw[color=black] (6.4,1.2) node {$X_3$};
\draw [fill=black] (1.7826328760294619,0.310886402769353) circle (1.5pt);
\draw[color=black] (1.3,0.5) node {$X_4$};

\draw [fill=black] (8.535691317804156,6.081054344211682) circle (1.5pt);
\draw[color=black] (8.9,5.9) node {$Z_1$};
\draw [fill=black] (2.681586453306882,0.25178200910687565) circle (1.5pt);
\draw[color=black] (2.9,-0.05) node {$Z_2$};

\end{scriptsize}
\end{tikzpicture}
        \caption{\;}
        \label{cube_2edge_4vert_dev}
    \end{figure}
 \end{minipage}

\end{proof}
    
\begin{theorem}\label{geod_cube}
There are only three different simple closed geodesics on  spherical cubes.
\end{theorem}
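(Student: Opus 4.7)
The plan is to use Lemma~\ref{cube_lemma} to reduce the problem to a combinatorial classification of the vertex set of $D_1$. Since the $1$-skeleton of the cube is $3$-regular of girth $4$, any induced subgraph on four vertices with at least three edges is one of: a face ($4$-cycle, $4$ edges), a claw $K_{1,3}$ (a vertex together with its three neighbors, $3$ edges), or a path $P_4$ (three successive edges, $3$ edges). The isometry group of the cube acts transitively on each of these three classes, and in every case the complementary vertex set also fulfills the condition: the complement of a face is the opposite face, the complement of the claw at $v$ is the claw at the antipodal vertex $v'$, and for $D_1=\{A_1,A_2,A_3,A'_3\}$ one checks directly that $D_2=\{A_4,A'_1,A'_2,A'_4\}$ induces the path $A_4\!-\!A'_4\!-\!A'_1\!-\!A'_2$. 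Up to cube symmetry there are thus precisely three possible types: \textbf{Type F} (face vs.\ opposite face), \textbf{Type C} (two antipodal claws), and \textbf{Type P} (two paths).

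For each type I would construct an explicit simple closed geodesic. In \textbf{Type F}, joining the midpoints of the four edges $A_iA'_i$ by arcs through the four side faces yields a curve invariant under the $C_4$ rotation around the axis through the centers of the two fixed faces and under the reflection swapping them, so equivariance forces the arcs to meet each edge orthogonally and the curve is a geodesic. In \textbf{Type C}, the $C_3$ rotation about the body diagonal through the two claw-centers together with the central inversion generates a group of cube symmetries preserving the configuration and acting transitively on the six boundary edges, and the invariant broken line through corresponding points of those edges is a geodesic by equivariance. In \textbf{Type P}, only one nontrivial cube symmetry preserves the configuration, namely a $180^\circ$ rotation through the cube's center that reverses the path, so one would unfold the six faces crossed by the presumptive geodesic into a strip on the unit sphere and draw a great-circle arc joining two points exchanged by the involution, using~\eqref{cos_rule},~\eqref{a_cube} and~\eqref{diagonal} to verify that the arc closes up and stays inside the strip for every $\alpha\in(\pi/2,2\pi/3)$.

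For uniqueness, Lemma~\ref{uniqness} gives that any two equivalent simple closed geodesics coincide. In each type, the vertex-split of every face of the cube (either $4$-$0$, $3$-$1$, or $2$-$2$ with two adjacent vertices on each side) determines which pair of boundary edges $\gamma$ connects inside that face, so the cyclic order of edge crossings around $\gamma$ is determined by $(D_1,D_2)$. Hence any simple closed geodesic whose decomposition is of a given type is equivalent to the one constructed above, and therefore equal to it. Combined with transitivity of the cube isometry group on each type, this yields exactly three different simple closed geodesics. The \textbf{main obstacle} is the construction of the Type~P geodesic: equivariance under the single available involution is insufficient to pin down the curve, and one must carry out a careful unfolding argument, reducing existence to a trigonometric identity in $\alpha$ to be verified for the whole range $\alpha\in(\pi/2,2\pi/3)$.
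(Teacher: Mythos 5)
Your overall skeleton is the same as the paper's: Lemma~\ref{cube_lemma} reduces to the three combinatorial types (face, claw, path), one constructs an explicit geodesic for each, uniqueness within a type follows from the forced cyclic order of edge crossings plus Lemma~\ref{uniqness}, and the symmetry group accounts for the multiplicities. However, your proposal leaves a genuine gap exactly where the real work lies: the existence of the Type~P geodesic is not established, only announced as "the main obstacle" to be settled by a trigonometric identity over the whole range $\alpha\in(\pi/2,2\pi/3)$. The paper closes this step with no such verification: it takes the two transition points to be the \emph{midpoints} $X_1$, $X_2$ of the two edges $A_2A_3$ and $A'_1A'_4$ (which are exchanged by the central involution preserving the configuration), develops the two complementary triples of facets between them onto the sphere, and observes that the two developments are congruent spherical polygons; hence the two great-circle arcs $X_1X_2$ drawn in them make equal angles with the crossed edges at both junction points, so the closed curve is a geodesic. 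In other words, the symmetric choice of base points converts the "insufficient" involution into exactly the congruence needed, and no identity in $\alpha$ has to be checked.

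A second, smaller gap is your Type~C argument. The group generated by the $C_3$ rotation about the diagonal and the central inversion has order six and contains no element fixing a crossing point, so there is a one-parameter family of invariant hexagonal broken lines through "corresponding points" of the six boundary edges, and invariance under this group does not force the equal-angle condition at the crossings (on the Euclidean cube the entire family consists of geodesics, while on the spherical cube only one member can be, by Lemma~\ref{uniqness}). To make the equivariance argument honest you must either pass to the full stabilizer and use the $C_2$ axes (or reflection planes) through the midpoints of opposite crossed edges, which do fix crossing points, or argue as the paper does by checking that the two triangles cut off at each crossing (e.g.\ $X_2A'_2X_1$ and $X_2A_2X_3$) are congruent, which gives the equal angles directly. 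Your Type~F argument is fine as stated, since the equatorial reflection fixes the four crossing points, and your uniqueness and counting steps match the paper.
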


\begin{proof}
Consider a spherical cube with a planar angle $\alpha \in (\pi/2, 2\pi/3)$.
A simple closed geodesic $\gamma$ divides the cube's surface 
into two closed domains $D_1$ and $D_2$. 
From Lemma~\ref{cube_lemma} follows each domain $D_i$ contains 
four vertices of the cube connecting with at least three edges. 
There are three possibilities for four vertices of a cube
that are consequently connected with edges: 
1) they form a facet of a cube;
 2) three edges share a cube's vertex; and 
 3) three edges form a broken line, that does not form a facet.

\textbf{Type 1.}
Take the midpoints $X_k$ of the edges $A_kA'_k$, $k=1, \dots 4$. 
Join these points within the facets (see Figure~\ref{cube_geod_type_1}). 
The broken line $X_1X_2X_3X_4$ forms a simple closed geodesic $\gamma$
on the cube with the planar angle $\alpha \in (\pi/2, 2\pi/3)$. 
The geodesic $\gamma$ encloses the facet  $A_1A_2A_3A_4$ from one side 
and $A'_1A'_2A'_3A'_4$ from another.

If there is another broken line $\sigma$ on the cube, 
that does not intersect facets $A_1A_2A_3A_4$ and $A'_1A'_2A'_3A'_4$, then 
$\sigma$ is equivalent to $\gamma$. 
By Lemma~\ref{uniqness}, it follows that 
$\gamma$ is a unique geodesic that bounds the domain containing the facet of the cube. 

In general, there are three geodesics of such type on the cube
with the planar angle $\alpha \in (\pi/2, 2\pi/3)$. 
They lie on the planes of symmetry of the cube,
and can be mapped into each other with the cube's symmetries.
  
\begin{figure}[h]
 \centering
\begin{tikzpicture}[line cap=round,line join=round,x=0.7cm,y=0.7cm]

\clip(-2,-2) rectangle (8,7);

\draw [shift={(5,2)},line width=0.3pt]  plot[domain=2.677945044588987:3.6052402625905993,variable=\t]({1*4.47213595499958*cos(\t r)+0*4.47213595499958*sin(\t r)},{0*4.47213595499958*cos(\t r)+1*4.47213595499958*sin(\t r)});
\draw [shift={(-2,2)},line width=0.3pt]  plot[domain=-0.27829965900511144:0.27829965900511133,variable=\t]({1*7.280109889280519*cos(\t r)+0*7.280109889280519*sin(\t r)},{0*7.280109889280519*cos(\t r)+1*7.280109889280519*sin(\t r)});
\draw [shift={(3,-4)},line width=0.3pt]  plot[domain=1.3258176636680326:1.8157749899217608,variable=\t]({1*8.246211251235321*cos(\t r)+0*8.246211251235321*sin(\t r)},{0*8.246211251235321*cos(\t r)+1*8.246211251235321*sin(\t r)});
\draw [shift={(3,8)},line width=0.3pt]  plot[domain=4.4674103172578254:4.957367643511554,variable=\t]({1*8.246211251235321*cos(\t r)+0*8.246211251235321*sin(\t r)},{0*8.246211251235321*cos(\t r)+1*8.246211251235321*sin(\t r)});
\draw [shift={(4,3)},line width=0.3pt]  plot[domain=1.892546881191539:2.819842099193151,variable=\t]({1*3.1622776601683795*cos(\t r)+0*3.1622776601683795*sin(\t r)},{0*3.1622776601683795*cos(\t r)+1*3.1622776601683795*sin(\t r)});
\draw [shift={(9.130809898184463,1.8950343697549739)},line width=0.3pt]  plot[domain=2.049591997369353:2.6703128472203277,variable=\t]({1*4.6250506643380485*cos(\t r)+0*4.6250506643380485*sin(\t r)},{0*4.6250506643380485*cos(\t r)+1*4.6250506643380485*sin(\t r)});
\draw [shift={(3,4)},line width=0.3pt]  plot[domain=5.176036589385496:5.81953769817878,variable=\t]({1*4.47213595499958*cos(\t r)+0*4.47213595499958*sin(\t r)},{0*4.47213595499958*cos(\t r)+1*4.47213595499958*sin(\t r)});
\draw [shift={(3,4)},line width=0.3pt]  plot[domain=-0.46364760900080615:0.4636476090008061,variable=\t]({1*4.47213595499958*cos(\t r)+0*4.47213595499958*sin(\t r)},{0*4.47213595499958*cos(\t r)+1*4.47213595499958*sin(\t r)});
\draw [shift={(4.289620338326439,-1.2517388441527422)},line width=0.3pt,dotted]  plot[domain=1.9483631122020275:2.7779986343945624,variable=\t]({1*3.498131805349394*cos(\t r)+0*3.498131805349394*sin(\t r)},{0*3.498131805349394*cos(\t r)+1*3.498131805349394*sin(\t r)});
\draw [shift={(5,-4)},line width=0.3pt,dotted]  plot[domain=1.2490457723982544:1.892546881191539,variable=\t]({1*6.324555320336759*cos(\t r)+0*6.324555320336759*sin(\t r)},{0*6.324555320336759*cos(\t r)+1*6.324555320336759*sin(\t r)});
\draw [shift={(8,4)},line width=0.3pt,dotted]  plot[domain=2.761086276477428:3.522099030702158,variable=\t]({1*5.385164807134505*cos(\t r)+0*5.385164807134505*sin(\t r)},{0*5.385164807134505*cos(\t r)+1*5.385164807134505*sin(\t r)});
\draw [shift={(4.98,0.87)},line width=0.3pt]  plot[domain=1.1956789173869815:1.9391452482038598,variable=\t]({1*5.513374647164838*cos(\t r)+0*5.513374647164838*sin(\t r)},{0*5.513374647164838*cos(\t r)+1*5.513374647164838*sin(\t r)});

\draw [shift={(4.178138750361743,-2.8803681915244637)},line width=0.3pt]  plot[domain=1.3485794790452956:1.8736638900838034,variable=\t]({1*8.416828737402566*cos(\t r)+0*8.416828737402566*sin(\t r)},{0*8.416828737402566*cos(\t r)+1*8.416828737402566*sin(\t r)});
\draw [shift={(-0.7965292437640612,2.758454105529259)},line width=0.3pt]  plot[domain=-0.2473935355962933:0.3600434203309359,variable=\t]({1*7.307367566660957*cos(\t r)+0*7.307367566660957*sin(\t r)},{0*7.307367566660957*cos(\t r)+1*7.307367566660957*sin(\t r)});
\draw [shift={(7.550670630351407,3.148250972187613)},line width=0.3pt,dotted] 
plot[domain=2.814914342161928:3.4848073193821114,variable=\t]({1*6.207038900071783*cos(\t r)+0*6.207038900071783*sin(\t r)},{0*6.207038900071783*cos(\t r)+1*6.207038900071783*sin(\t r)});
\draw [shift={(4.167643197128606,9.140961529534101)}, line width=0.3pt,,dotted] 
plot[domain=4.419312527683169:4.9663003317821435,variable=\t]({1*8.431681605797332*cos(\t r)+0*8.431681605797332*sin(\t r)},{0*8.431681605797332*cos(\t r)+1*8.431681605797332*sin(\t r)});

\begin{scriptsize}
\draw [fill=black] (5,4) circle (1.5pt);
\draw[color=black] (5.5,4.07) node {$A_1$};
\draw [fill=black] (1,4) circle (1.5pt);
\draw[color=black] (0.5,4.1) node {$A_2$};
\draw [fill=black] (1,0) circle (1.5pt);
\draw[color=black] (0.7,-0.2) node {$A_3$};
\draw [fill=black] (5,0) circle (1.5pt);
\draw[color=black] (5.2,-0.5) node {$A_4$};

\draw [fill=black] (7,6) circle (1.5pt);
\draw[color=black] (7.5,6.2) node {$A'_1$};
\draw [fill=black] (3,6) circle (1.5pt);
\draw[color=black] (2.5,6.3) node {$A'_2$};
\draw [fill=black] (3,2) circle (1.5pt);
\draw[color=black] (3.2,1.7) node {$A'_3$};
\draw [fill=black] (7,2) circle (1.5pt);
\draw[color=black] (7.3,1.8) node {$A'_4$};

\draw [fill=black] (6.0331446874492745,5.3295014157309195) circle (1.5pt);
\draw[color=black] (6.6,5.3) node {$X_1$};
\draw [fill=black] (1.671900339964754,5.140082235089057) circle (1.5pt);
\draw[color=black] (1.1,5.2) node {$X_2$};
\draw [fill=black] (1.73174,1.06881) circle (1.5pt);
\draw[color=black] (1.2,1.15) node {$X_3$};
\draw [fill=black] (6.28835805918443,0.9690428451400361) circle (1.5pt);
\draw[color=black] (6.7,0.8) node {$X_4$};
\end{scriptsize}

\end{tikzpicture}
\includegraphics[width=0.4\textwidth]{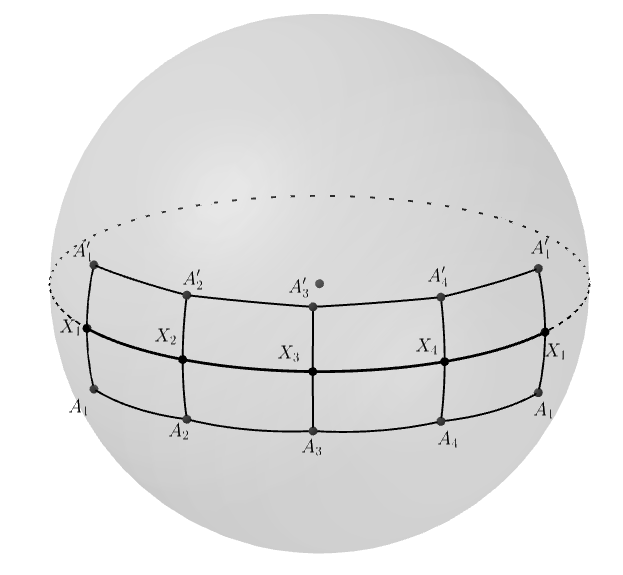}
\caption{ }
\label{cube_geod_type_1}
\end{figure}

\textbf{Type 2.} 
Take the midpoints $X_1, \dots, X_6$ of the edges $A'_1A'_2$,
$A'_2A_2$, $A_2A_3$, $A_3A_4$, $A_4A'_4$, $A'_4A'_1$. 
Join these points within the facets (see Figure~\ref{cube_geod_type_2}). 
Since the triangles $X_2A'_2X_1$ and $X_2A_2X_3$ are equal,
then the segments $X_1X_2$ and $X_2X_3$ form a geodesic segment on the cube.
The same is true for any adjacent segments of the closed broken line $X_1X_2X_3X_4X_5X_6$. 
Thus, this broken line forms a simple closed geodesic $\gamma$
on a cube with a planar angle $\alpha \in (\pi/2, 2\pi/3)$. 

This geodesic $\gamma$ bounds a domain containing three edges 
coming out of the vertex $A_1$ from one side and 
domain with three edges coming out of $A'_3$ from another.
If there is a broken line $\sigma$ that bounds the same domains, 
then $\sigma$ is equivalent to $\gamma$. 
Hence from Lemma~\ref{uniqness} follows $\gamma$ is unique simple closed geodesic 
of such type on the cube.

In general, there are four geodesics that bound  the domains
containing three edges sharing a vertex on a cube 
with the planar angle $\alpha\in (\pi/2, 2\pi/3)$.
They lie in the plane passing through the center of the cube orthogonal to the diagonal
and can be mapped into each other with symmetries of the cube.

\begin{figure}[h]
 \centering
\begin{tikzpicture}[line cap=round,line join=round,x=0.7cm,y=0.7cm]

\clip(-2,-2) rectangle (8.5,7);

\draw [shift={(5,2)},line width=0.3pt]  plot[domain=2.677945044588987:3.6052402625905993,variable=\t]({1*4.47213595499958*cos(\t r)+0*4.47213595499958*sin(\t r)},{0*4.47213595499958*cos(\t r)+1*4.47213595499958*sin(\t r)});
\draw [shift={(-2,2)},line width=0.3pt]  plot[domain=-0.27829965900511144:0.27829965900511133,variable=\t]({1*7.280109889280519*cos(\t r)+0*7.280109889280519*sin(\t r)},{0*7.280109889280519*cos(\t r)+1*7.280109889280519*sin(\t r)});
\draw [shift={(3,-4)},line width=0.3pt]  plot[domain=1.3258176636680326:1.8157749899217608,variable=\t]({1*8.246211251235321*cos(\t r)+0*8.246211251235321*sin(\t r)},{0*8.246211251235321*cos(\t r)+1*8.246211251235321*sin(\t r)});
\draw [shift={(3,8)},line width=0.3pt]  plot[domain=4.4674103172578254:4.957367643511554,variable=\t]({1*8.246211251235321*cos(\t r)+0*8.246211251235321*sin(\t r)},{0*8.246211251235321*cos(\t r)+1*8.246211251235321*sin(\t r)});
\draw [shift={(4,3)},line width=0.3pt]  plot[domain=1.892546881191539:2.819842099193151,variable=\t]({1*3.1622776601683795*cos(\t r)+0*3.1622776601683795*sin(\t r)},{0*3.1622776601683795*cos(\t r)+1*3.1622776601683795*sin(\t r)});
\draw [shift={(9.130809898184463,1.8950343697549739)},line width=0.3pt]  plot[domain=2.049591997369353:2.6703128472203277,variable=\t]({1*4.6250506643380485*cos(\t r)+0*4.6250506643380485*sin(\t r)},{0*4.6250506643380485*cos(\t r)+1*4.6250506643380485*sin(\t r)});
\draw [shift={(3,4)},line width=0.3pt]  plot[domain=5.176036589385496:5.81953769817878,variable=\t]({1*4.47213595499958*cos(\t r)+0*4.47213595499958*sin(\t r)},{0*4.47213595499958*cos(\t r)+1*4.47213595499958*sin(\t r)});
\draw [shift={(3,4)},line width=0.3pt]  plot[domain=-0.46364760900080615:0.4636476090008061,variable=\t]({1*4.47213595499958*cos(\t r)+0*4.47213595499958*sin(\t r)},{0*4.47213595499958*cos(\t r)+1*4.47213595499958*sin(\t r)});
\draw [shift={(4.289620338326439,-1.2517388441527422)},line width=0.3pt,dotted]  plot[domain=1.9483631122020275:2.7779986343945624,variable=\t]({1*3.498131805349394*cos(\t r)+0*3.498131805349394*sin(\t r)},{0*3.498131805349394*cos(\t r)+1*3.498131805349394*sin(\t r)});
\draw [shift={(5,-4)},line width=0.3pt,dotted]  plot[domain=1.2490457723982544:1.892546881191539,variable=\t]({1*6.324555320336759*cos(\t r)+0*6.324555320336759*sin(\t r)},{0*6.324555320336759*cos(\t r)+1*6.324555320336759*sin(\t r)});
\draw [shift={(8,4)},line width=0.3pt,dotted]  plot[domain=2.761086276477428:3.522099030702158,variable=\t]({1*5.385164807134505*cos(\t r)+0*5.385164807134505*sin(\t r)},{0*5.385164807134505*cos(\t r)+1*5.385164807134505*sin(\t r)});
\draw [shift={(4.98,0.87)},line width=0.3pt]  plot[domain=1.1956789173869815:1.9391452482038598,variable=\t]({1*5.513374647164838*cos(\t r)+0*5.513374647164838*sin(\t r)},{0*5.513374647164838*cos(\t r)+1*5.513374647164838*sin(\t r)});

\draw [shift={(7,-4)},line width=0.3pt]  plot[domain=1.7582829347064102:2.0985756302534626,variable=\t]({1*10.568348178730501*cos(\t r)+0*10.568348178730501*sin(\t r)},{0*10.568348178730501*cos(\t r)+1*10.568348178730501*sin(\t r)});
\draw [shift={(2.2893076789530022,0.8609518983926844)},line width=0.3pt,dotted]  plot[domain=0.5719374225131915:1.1100853033987088,variable=\t]({1*6.1587383534027715*cos(\t r)+0*6.1587383534027715*sin(\t r)},{0*6.1587383534027715*cos(\t r)+1*6.1587383534027715*sin(\t r)});
\draw [shift={(-3.8248660995981982,6.515125141880457)},line width=0.3pt]  plot[domain=5.78157208045328:6.080505527190989,variable=\t]({1*11.534137667290178*cos(\t r)+0*11.534137667290178*sin(\t r)},{0*11.534137667290178*cos(\t r)+1*11.534137667290178*sin(\t r)});
\draw [shift={(1.0051395185676406,9.658462131480439)},line width=0.3pt,dotted]  plot[domain=4.927616807817248:5.2586744150013605,variable=\t]({1*10.136798349605346*cos(\t r)+0*10.136798349605346*sin(\t r)},{0*10.136798349605346*cos(\t r)+1*10.136798349605346*sin(\t r)});
\draw [shift={(7.4834803873773765,7.262626011358502)},line width=0.3pt]  plot[domain=3.7796052198636967:4.190869928707741,variable=\t]({1*8.657435377482424*cos(\t r)+0*8.657435377482424*sin(\t r)},{0*8.657435377482424*cos(\t r)+1*8.657435377482424*sin(\t r)});
\draw [shift={(12.102652426972485,-0.4232162619926742)},line width=0.3pt,dotted]  plot[domain=2.651618106853904:2.9264193657061193,variable=\t]({1*11.821627606562812*cos(\t r)+0*11.821627606562812*sin(\t r)},{0*11.821627606562812*cos(\t r)+1*11.821627606562812*sin(\t r)});

\begin{scriptsize}
\draw [fill=black] (5,4) circle (1.5pt);
\draw[color=black] (5.5,4.07) node {$A_1$};
\draw [fill=black] (1,4) circle (1.5pt);
\draw[color=black] (0.3,4.1) node {$A_2$};
\draw [fill=black] (1,0) circle (1.5pt);
\draw[color=black] (0.7,-0.3) node {$A_3$};
\draw [fill=black] (5,0) circle (1.5pt);
\draw[color=black] (5.2,-0.5) node {$A_4$};

\draw [fill=black] (7,6) circle (1.5pt);
\draw[color=black] (7.5,6.2) node {$A'_1$};
\draw [fill=black] (3,6) circle (1.5pt);
\draw[color=black] (2.5,6.3) node {$A'_2$};
\draw [fill=black] (3,2) circle (1.5pt);
\draw[color=black] (3.2,1.6) node {$A'_3$};
\draw [fill=black] (7,2) circle (1.5pt);
\draw[color=black] (7.4,1.8) node {$A'_4$};

\draw [fill=black] (5.03016410999222,6.383146430312612) circle (1.5pt);
\draw[color=black] (5.24,6.8) node {$X_1$};
\draw [fill=black] (1.671900339964754,5.140082235089057) circle (1.5pt);
\draw[color=black] (1.2,5.5) node {$X_2$};
\draw [fill=black] (0.5291265259383815,2.1062561947944514) circle (1.5pt);
\draw[color=black] (-0.1,2.3) node {$X_3$};
\draw [fill=black] (3.1700555906023364,-0.24445759866013894) circle (1.5pt);
\draw[color=black] (3.3,-0.7) node {$X_4$};
\draw [fill=black] (6.28835805918443,0.9690428451400352) circle (1.5pt);
\draw[color=black] (6.8,0.9) node {$X_5$};
\draw [fill=black] (7.467906845290419,4.194443878296596) circle (1.5pt);
\draw[color=black] (8,4.2) node {$X_6$};

\end{scriptsize}
\end{tikzpicture}
\includegraphics[width=0.4\textwidth]{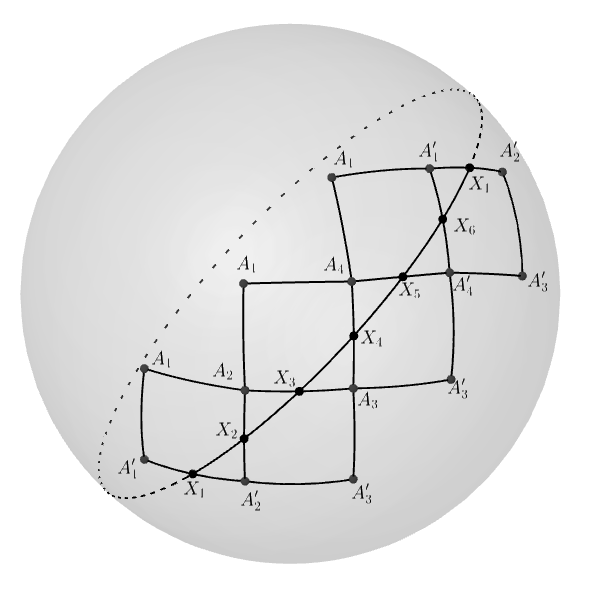}
\caption{ }
\label{cube_geod_type_2}
\end{figure}

\textbf{Type 3.} 
Take the midpoints $X_1$ and $X_2$ of the edges $A_2A_3$ and $A'_1A'_4$. 
First, develop the edges $A_3A_2A'_2A'_3$, $A_2A'_2A'_1A_1$ and $A_1A'_1A'_4A_4$ 
on a unite sphere and connect the points $X_1$, $X_2$ inside the development. 
Then develop the edges $A'_1A'_2A'_3A'_4$, $A'_3A'_4A_3A_4$, $A_3A_4A_1A_2$
and connect the points $X_1$ and $X_2$ inside the development
(see Figure~\ref{cube_geod_type_3}). 
Since the developments are equal spherical polygons, 
then $\angle X_1X_2A'_1=\angle X'_1X_2A'_4$ and
$\angle X_2X_2A_2= \angle X_2X_1A_3$.
Hence, the closed broken line $X_1X_2$ forms a simple closed geodesic $\gamma$ 
on the spherical cube with the planar angle $\alpha\in (\pi/2, 2\pi/3)$.

By construction, $\gamma$  encloses a domain $D_1$ containing 
the edges $A_2A_1$, $A_1A_4$ and $A_4A'_4$ from one side and
a domain $D_2$ with the edges $A'_1A'_2$, $A'_2A'_3$ and $A'_3A_3$ from another. 
These edges form a broken line that does not bound a facet. 
Any broken line $\sigma$ that bounds the domains $D_1$ and $D_2$
is equivalent to $\gamma$. 
Indeed, assume $\sigma$ starts at the edge $A_2A_3$.
Then on the facet $A_2A_3A'_3A'_2$, it can no go to the edges $A'_2A'_3$ and $A'_3A_3$.
Hence, $\sigma$ goes to $A_2A'_2$. 
Then, since $\sigma$ can no go to $A'_1A'_2$ and $A_1A_2$, it goes to $A_1A'_1$.
After, $\sigma$ can go only to $A'_1A'_4$. 
Continuing in the same way, we obtain that $\sigma$ intersects 
the same edges as $\gamma$ in the same order. 
Therefore, by Lemma~\ref{uniqness} $\gamma$ is a unique geodesic of such type.

In general, there are $12$ simple closed geodesics 
that enclose domains with three edges that form a broken line
on a cube with the planar angle $\alpha\in (\pi/2, 2\pi/3)$.
These geodesics can be mapped into each other with symmetries  of the cube. 

\begin{figure}[h]
 \centering
\begin{tikzpicture}[line cap=round,line join=round,x=0.7cm,y=0.7cm]
\clip(-2,-2) rectangle (8.5,7);

\draw [shift={(5,2)},line width=0.3pt]  plot[domain=2.677945044588987:3.6052402625905993,variable=\t]({1*4.47213595499958*cos(\t r)+0*4.47213595499958*sin(\t r)},{0*4.47213595499958*cos(\t r)+1*4.47213595499958*sin(\t r)});
\draw [shift={(-2,2)},line width=0.3pt]  plot[domain=-0.27829965900511144:0.27829965900511133,variable=\t]({1*7.280109889280519*cos(\t r)+0*7.280109889280519*sin(\t r)},{0*7.280109889280519*cos(\t r)+1*7.280109889280519*sin(\t r)});
\draw [shift={(3,-4)},line width=0.3pt]  plot[domain=1.3258176636680326:1.8157749899217608,variable=\t]({1*8.246211251235321*cos(\t r)+0*8.246211251235321*sin(\t r)},{0*8.246211251235321*cos(\t r)+1*8.246211251235321*sin(\t r)});
\draw [shift={(3,8)},line width=0.3pt]  plot[domain=4.4674103172578254:4.957367643511554,variable=\t]({1*8.246211251235321*cos(\t r)+0*8.246211251235321*sin(\t r)},{0*8.246211251235321*cos(\t r)+1*8.246211251235321*sin(\t r)});
\draw [shift={(4,3)},line width=0.3pt]  plot[domain=1.892546881191539:2.819842099193151,variable=\t]({1*3.1622776601683795*cos(\t r)+0*3.1622776601683795*sin(\t r)},{0*3.1622776601683795*cos(\t r)+1*3.1622776601683795*sin(\t r)});
\draw [shift={(9.130809898184463,1.8950343697549739)},line width=0.3pt]  plot[domain=2.049591997369353:2.6703128472203277,variable=\t]({1*4.6250506643380485*cos(\t r)+0*4.6250506643380485*sin(\t r)},{0*4.6250506643380485*cos(\t r)+1*4.6250506643380485*sin(\t r)});
\draw [shift={(3,4)},line width=0.3pt]  plot[domain=5.176036589385496:5.81953769817878,variable=\t]({1*4.47213595499958*cos(\t r)+0*4.47213595499958*sin(\t r)},{0*4.47213595499958*cos(\t r)+1*4.47213595499958*sin(\t r)});
\draw [shift={(3,4)},line width=0.3pt]  plot[domain=-0.46364760900080615:0.4636476090008061,variable=\t]({1*4.47213595499958*cos(\t r)+0*4.47213595499958*sin(\t r)},{0*4.47213595499958*cos(\t r)+1*4.47213595499958*sin(\t r)});
\draw [shift={(4.289620338326439,-1.2517388441527422)},line width=0.3pt,dotted]  plot[domain=1.9483631122020275:2.7779986343945624,variable=\t]({1*3.498131805349394*cos(\t r)+0*3.498131805349394*sin(\t r)},{0*3.498131805349394*cos(\t r)+1*3.498131805349394*sin(\t r)});
\draw [shift={(5,-4)},line width=0.3pt,dotted]  plot[domain=1.2490457723982544:1.892546881191539,variable=\t]({1*6.324555320336759*cos(\t r)+0*6.324555320336759*sin(\t r)},{0*6.324555320336759*cos(\t r)+1*6.324555320336759*sin(\t r)});
\draw [shift={(8,4)},line width=0.3pt,dotted]  plot[domain=2.761086276477428:3.522099030702158,variable=\t]({1*5.385164807134505*cos(\t r)+0*5.385164807134505*sin(\t r)},{0*5.385164807134505*cos(\t r)+1*5.385164807134505*sin(\t r)});
\draw [shift={(4.98,0.87)},line width=0.3pt]  plot[domain=1.1956789173869815:1.9391452482038598,variable=\t]({1*5.513374647164838*cos(\t r)+0*5.513374647164838*sin(\t r)},{0*5.513374647164838*cos(\t r)+1*5.513374647164838*sin(\t r)});

\draw [shift={(0.12347182636276519,6.285124874348751)},line width=0.3pt,dotted]  plot[domain=5.0796670840213:5.667743375101344,variable=\t]({1*6.989302058082156*cos(\t r)+0*6.989302058082156*sin(\t r)},{0*6.989302058082156*cos(\t r)+1*6.989302058082156*sin(\t r)});
\draw [shift={(2.634308080250562,7.1667925665536245)},line width=0.3pt,dotted]  plot[domain=5.286131060544825:5.698637844858971,variable=\t]({1*5.82532687505157*cos(\t r)+0*5.82532687505157*sin(\t r)},{0*5.82532687505157*cos(\t r)+1*5.82532687505157*sin(\t r)});
\draw [shift={(5.298477845826164,4.483456112017055)},line width=0.3pt]  plot[domain=3.613176603898064:4.1984777344146575,variable=\t]({1*5.40028051578995*cos(\t r)+0*5.40028051578995*sin(\t r)},{0*5.40028051578995*cos(\t r)+1*5.40028051578995*sin(\t r)});

\draw [shift={(14.345155035406624,-0.9790502418609638)},line width=0.3pt,dotted]  plot[domain=2.7106234164463183:2.9277616702631177,variable=\t]({1*14.143809376954263*cos(\t r)+0*14.143809376954263*sin(\t r)},{0*14.143809376954263*cos(\t r)+1*14.143809376954263*sin(\t r)});
\draw [shift={(4.934310755567628,-2.167384957441445)},line width=0.3pt]  plot[domain=1.3922559798681071:2.0221051561364702,variable=\t]({1*7.869510474943066*cos(\t r)+0*7.869510474943066*sin(\t r)},{0*7.869510474943066*cos(\t r)+1*7.869510474943066*sin(\t r)});
\draw [shift={(3.707642662065193,2.4326203931926758)},line width=0.3pt]  plot[domain=0.38676188684188995:0.8753269067421325,variable=\t]({1*4.064591998424348*cos(\t r)+0*4.064591998424348*sin(\t r)},{0*4.064591998424348*cos(\t r)+1*4.064591998424348*sin(\t r)});

\begin{scriptsize}
\draw [fill=black] (5,4) circle (1.5pt);
\draw[color=black] (5.5,4.07) node {$A_1$};
\draw [fill=black] (1,4) circle (1.5pt);
\draw[color=black] (0.4,4.1) node {$A_2$};
\draw [fill=black] (1,0) circle (1.5pt);
\draw[color=black] (0.7,-0.3) node {$A_3$};
\draw [fill=black] (5,0) circle (1.5pt);
\draw[color=black] (5.2,-0.5) node {$A_4$};

\draw [fill=black] (7,6) circle (1.5pt);
\draw[color=black] (7.5,6.2) node {$A'_1$};
\draw [fill=black] (3,6) circle (1.5pt);
\draw[color=black] (2.5,6.3) node {$A'_2$};
\draw [fill=black] (3,2) circle (1.5pt);
\draw[color=black] (3.2,1.6) node {$A'_3$};
\draw [fill=black] (7,2) circle (1.5pt);
\draw[color=black] (7.4,1.8) node {$A'_4$};

\draw [fill=black] (1.4946375386569768,4.929548894766139) circle (1.5pt);
\draw [fill=black] (6.331883190510121,5.577031508912076) circle (1.5pt);
\draw [fill=black] (2.633165227860871,-0.238047842174117) circle (1.5pt);
\draw [fill=black] (5.7961715686278374,2.2742418532687037) circle (1.5pt);

\draw [fill=black] (0.48763891662130066,2.03011992501219) circle (1.5pt);
\draw[color=black] (0.1,2) node {$X_1$};

\draw [fill=black] (7.472004799259215,3.9657498116421706) circle (1.5pt);
\draw[color=black] (7.9,4) node {$X_2$};
\end{scriptsize}
\end{tikzpicture}
\includegraphics[width=0.4\textwidth]{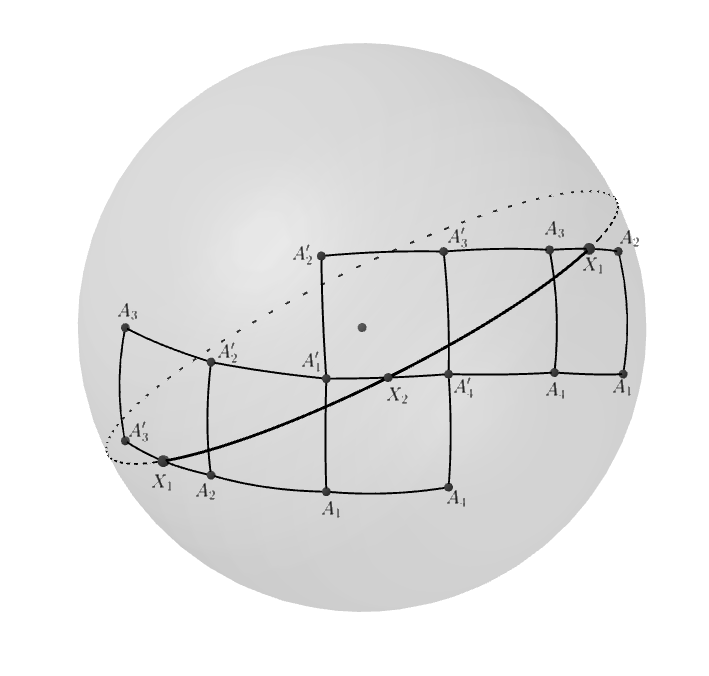}
\caption{ }
\label{cube_geod_type_3}
\end{figure}

\end{proof}

The author believes that the same method can be used to find geodesics on spherical icosahedra and dodecahedra.

\bibliographystyle{alpha}
\bibliography{bibliography}

\end{document}